\tikzstyle{vertex}=[circle, draw, inner sep=0pt, minimum size=4pt]
\newcommand{\vertex}{\node[vertex]}
\newcommand{\bd}{\begin{description}}
\newcommand{\ed}{\end{description}}
\newcommand{\bi}{\begin{itemize}}
\newcommand{\ei}{\end{itemize}}
\newcommand{\be}{\begin{enumerate}}
\newcommand{\ee}{\end{enumerate}}
\newcommand{\beqs}{\begin{eqnarray*}}
\newcommand{\eeqs}{\end{eqnarray*}}
\newcommand{\pp}{^{\prime \prime}}
\definecolor{DarkGreen}{rgb}{0.2, 0.6, 0.3}
\newcommand{\xRightarrow}[1]{\overset{#1}{\Longrightarrow}}
\newtheorem{theorem}{Theorem}
\newtheorem{lemma}{Lemma}
\newtheorem{corollary}[theorem]{Corollary}
\newtheorem{case}{Case}
\newtheorem{claim}{Claim}
\newtheorem{subclaim}{Subclaim}[claim]
\newtheorem{fact}{Fact}
\newtheorem{conjecture}{Conjecture}
\begin{document}

\title{\bf Properly colored $C_{4}$'s in edge-colored graphs}
\author{Chuandong Xu\footnote{School of Mathematics and Statistics, Xidian University, Xi'an, Shaanxi 710126, P.R.~China. {\tt xuchuandong @xidian.edu.cn}}, Colton Magnant\footnote{Department of Mathematics, Clayton State University, Morrow, GA 30260, USA.  {\tt dr.colton.magnant@gmail. com}} \footnote{Academy of Plateau Science and Sustainability, Xining, Qinghai 810008, China}, Shenggui Zhang\footnote{Department of Applied Mathematics, Northwestern Polytechnical University, Xi'an, Shaanxi 710129, P.R.~China. {\tt sgzhang@nwpu.edu.cn}} \footnote{Xi'an-Budapest Joint Center for Combinatorics, Northwestern Polytechnical University, Xi'an, Shaanxi 710129, P.R.~China.}}
\maketitle

\begin{abstract}
When many colors appear in edge-colored graphs, it is only natural to expect rainbow subgraphs to appear. This anti-Ramsey problem has been studied thoroughly and yet there remain many gaps in the literature. Expanding upon classical and recent results forcing rainbow triangles to appear, we consider similar conditions which force the existence of a properly colored copy of $C_{4}$.
\end{abstract}

\section{Introduction}

We consider colorings of the edges of graphs so let $G$ be an edge-colored graph. The number of edges in $G$ and the number of colors appearing on the edges of $G$ are denoted by $e(G)$ and $c(G)$, respectively. A subgraph of $G$ is called {\it rainbow} (or {\it properly colored}), if the colors of all its edges (respectively adjacent edges) are distinct. For other standard notation and terminology, we refer the reader to \cite{CLZ11}.

In \cite{ESS1973}, Erd{\H{o}}s, Simonovits, and S{\'o}s posed the following conjecture.

\begin{conjecture}[Erd{\H{o}}s, Simonovits, S{\'o}s \cite{ESS1973}] \label{essconj}
Let $G$ be an edge-colored graph with order $n$. For all $n \geq k \geq 3$, if
$$
c(G) \geq \left( \frac{k - 2}{2} + \frac{1}{k - 1} \right) n + O(1),
$$
then $G$ contains a rainbow cycle $C_{k}$.
\end{conjecture}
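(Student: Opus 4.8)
The plan is to prove the sharp form of this estimate by determining the \emph{anti-Ramsey number} $ar(n, C_k)$, the maximum number of colors in an edge-coloring of $K_n$ with no rainbow $C_k$; the conjecture is then equivalent to the asymptotic $ar(n, C_k) = \left(\frac{k-2}{2} + \frac{1}{k-1}\right)n + O(1)$, and I would attack the lower and upper bounds separately. One point worth stressing at the outset is that the naive ``representing subgraph'' reduction — pick one edge of each color to form a rainbow graph $R$ with $c(G)$ edges and search for $C_k \subseteq R$ — cannot give the right answer, because a rainbow $C_k$ may use edges outside any fixed color transversal, and indeed the threshold $\left(\frac{k-2}{2}+\frac{1}{k-1}\right)n$ lies well below both the circumference bound of Erd\H{o}s--Gallai and the Tur\'an number for $C_k$. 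Thus the entire coloring, not a single representative per color, must be exploited.

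For the lower bound (the construction realizing the $O(1)$ slack) I would partition $V(K_n)$ into $t = \ceil{\frac{n}{k-1}}$ blocks $V_1,\dots,V_t$, each of size at most $k-1$, color the edges inside the blocks with pairwise distinct fresh colors — contributing $\sum_i \binom{|V_i|}{2} = \frac{k-2}{2}n + O(1)$ colors — and color every edge between $V_i$ and $V_j$ with $i<j$ by a single color $\gamma_i$ depending only on the \emph{smaller} index, using $t-1$ further colors. Since each block has fewer than $k$ vertices, any copy of $C_k$ meets at least two blocks, and if $V_i$ is the lowest-indexed block it meets then the cycle has at least two boundary-crossing edges at $V_i$, all colored $\gamma_i$; hence no rainbow $C_k$ occurs, while the color count is exactly $\frac{k-2}{2}n + \left(\ceil{\frac{n}{k-1}}-1\right) = \left(\frac{k-2}{2}+\frac{1}{k-1}\right)n + O(1)$.

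The substance of the proof is the upper bound, which I would establish by induction on $k$ with an inner induction on $n$. Given a coloring of $K_n$ with no rainbow $C_k$ and strictly more colors than claimed, the goal is a contradiction. The engine is a reduction step: locate a vertex or a color class whose deletion lowers the color count by a controlled amount while preserving the no-rainbow-$C_k$ property, so the inductive hypothesis applies to a smaller instance and, pushed down to small $n$ (or to $C_{k-1}$), contradicts the surplus. A complementary idea is to pass to a rainbow subgraph realizing all colors and analyze its $2$-connected pieces: inside such a piece one can route a cycle and then adjust its length to exactly $k$ by substituting chords of colors not yet used on it, where the extra $\frac{1}{k-1}n$ colors beyond $\frac{k-2}{2}n$ serve as the reservoir guaranteeing such chords.

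I expect the main obstacle to be the exact-constant bookkeeping in this upper bound. Keeping the additive error at $O(1)$ forces a delicate stability analysis showing that any near-extremal configuration must essentially coincide with the block construction above, and the shortening-by-chords step is genuinely subtle since a rainbow cycle of length exceeding $k$ need not admit a chord of a fresh color, so the reduction has to be organized to produce one. The stated generality that $G$ need not be complete is a further complication, because absent edges can destroy precisely the chords one needs; I would handle this by first observing that the hypothesis already forces $G$ to be edge-dense, reducing to the complete case, and then accounting for the missing edges as a bounded deficiency.
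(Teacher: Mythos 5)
First, a point of orientation: the paper does not prove this statement. It is quoted as a conjecture of Erd\H{o}s, Simonovits and S\'os, and the paper explicitly attributes its proof to Montellano-Ballesteros and Neumann-Lara (2005), with a simplified proof by Choi; no argument for it appears in this manuscript. So there is no ``paper's own proof'' against which your attempt can be matched, and your submission has to be judged on its own as a proof of a known but genuinely hard theorem.

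Judged that way, it is not a proof; it is a research plan with the decisive step missing. Your lower-bound construction (blocks of size $k-1$, rainbow inside, crossing edges colored by the smaller block index) is correct and standard, and your opening remark that a single color transversal cannot suffice is a sound observation. But the entire content of the statement is the upper bound, and there you only name two strategies without executing either. The deletion induction does not close as described: the natural reduction is to delete a vertex $v$ whose saturated color degree is at most the per-vertex increment $\frac{k-2}{2}+\frac{1}{k-1}$, but if no such vertex exists one only gets $\sum_v d^s(v) > (k-2+\frac{2}{k-1})n$, which is compatible with $\sum_v d^s(v)\leq 2c(G)$ and the hypothesis on $c(G)$, so no contradiction is reached and no smaller instance is produced. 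Your ``complementary idea'' (take a spanning rainbow subgraph, find a long cycle in a $2$-connected block, and shorten it to length exactly $k$ using chords of fresh colors) is indeed the heart of the known proofs, but you yourself concede that a long rainbow cycle need not admit a chord of a fresh color; overcoming exactly that obstruction, together with the stability analysis pinning down near-extremal colorings, is where all the work lies, and none of it is carried out. Finally, your reduction of the non-complete case to the complete case is not immediate either: completing $G$ to $K_n$ with one new color yields a rainbow $C_k$ in $K_n$ that may use one of the added edges, so it does not directly produce a rainbow $C_k$ in $G$. As it stands the proposal establishes only the sharpness of the bound, not the theorem.
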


This conjecture was proven in $2005$ by Montellano-Ballesteros and Neumann-Lara \cite{MR2190794} with a simplified proof provided by Choi in \cite{Choi}, and the precise value is well known for the triangle ($C_3$), as seen in the following result.

\begin{theorem}[Erd\H{o}s, Simonovits, S\'os \cite{ESS1973}]
	Let G be an edge-colored complete graph on $n \geq 3$ vertices. If $c(G) \geq n$, then $G$ contains a rainbow $C_3$.
\end{theorem}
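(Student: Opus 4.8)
The plan is to prove a slightly sharper statement, namely that a \emph{shortest} rainbow cycle in $G$ must be a triangle, and then read off the result. First I would extract a rainbow subgraph $R \subseteq G$ by selecting exactly one edge of each of the $c(G)$ colors. Then $R$ spans the $n$ vertices and has $e(R) = c(G) \ge n$ edges, so $R$ contains a cycle (a graph on $n$ vertices with at least $n$ edges cannot be a forest). Moreover every cycle of $R$ is automatically rainbow, since distinct edges of $R$ carry distinct colors by construction. Hence $G$ has at least one rainbow cycle, and I may choose a rainbow cycle $C = v_1 v_2 \cdots v_k v_1$ of \emph{minimum} length $k \ge 3$.

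The heart of the argument is to rule out $k \ge 4$, and this is exactly where completeness of $G$ is used. Suppose $k \ge 4$ and consider the chord $v_1 v_3$, which exists because $G = K_n$. Look at the triangle $v_1 v_2 v_3$: its two sides $v_1 v_2$ and $v_2 v_3$ already carry distinct colors, since they lie on the rainbow cycle $C$. If the color of $v_1 v_3$ differs from both, then $v_1 v_2 v_3$ is a rainbow $C_3$ and we are finished. Otherwise the color of $v_1 v_3$ agrees with that of $v_1 v_2$ or of $v_2 v_3$; in either case I would pass to the shorter closed walk $v_1 v_3 v_4 \cdots v_k v_1$ and argue that it is a rainbow cycle of length $k-1 \ge 3$, contradicting the minimality of $C$ and forcing $k = 3$.

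The step demanding the most care is verifying that this shortened cycle is genuinely rainbow. Its edge set is $\{v_1 v_3\} \cup \{v_3 v_4, \dots, v_{k-1} v_k, v_k v_1\}$, obtained from $C$ by deleting the two edges $v_1 v_2, v_2 v_3$ and inserting the chord $v_1 v_3$. Because $C$ is rainbow, the colors on $v_3 v_4, \dots, v_k v_1$ are pairwise distinct, and the color of $v_1 v_3$ coincides with that of one of the two \emph{deleted} edges, so it cannot repeat among the surviving edges. Thus the new cycle is rainbow, and the contradiction goes through in both coincidence cases by the same bookkeeping. I expect the only genuine subtlety to be this tracking of colors under the chord substitution; once it is in place, minimality of the shortest rainbow cycle immediately delivers a rainbow triangle.
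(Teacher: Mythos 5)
Your proof is correct. Note first that the paper does not actually prove this statement: it is quoted from Erd\H{o}s, Simonovits and S\'os \cite{ESS1973} without proof, so there is no in-paper argument to match yours against. Your route --- pick one edge of each color to get a rainbow subgraph with at least $n$ edges on $n$ vertices, hence a cycle that is automatically rainbow; then take a shortest rainbow cycle and shorten it via the chord $v_1v_3$, whose color must coincide with one of the two deleted edges $v_1v_2,v_2v_3$ and therefore cannot clash with the surviving edges --- is the standard elementary proof of the anti-Ramsey bound for triangles, and your color bookkeeping in the shortening step is exactly right. It is worth contrasting this with how the paper handles its analogous result for properly colored $C_4$'s (Theorem~\ref{Thm:nP1C4}): there the authors take a minimum counterexample, reduce to the case where every vertex saturates at least two colors using the colored saturated degree $d^s$, and then combine the inequality $\sum_v d^s(v)\leq 2c(G)$ (Lemma~\ref{lem_CSdegree}) with the Erd\H{o}s--Gallai bound on $2K_2$-free graphs (Lemma~\ref{lem_matching}) to extract a rainbow $2K_2$ sitting inside a $C_4$. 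Your shortest-rainbow-cycle argument is more self-contained, but it exploits the coincidence that for a triangle ``rainbow'' and ``properly colored'' are the same notion; the chord-shortening trick does not transfer cleanly to properly colored $C_4$'s, which is why the paper resorts to the saturated-degree counting instead.
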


More generally, the same conclusion is possible for non-complete graphs by counting both the number of edges and the number of colors. Note that if $G$ is complete, the following result is precisely the previous result.

\begin{theorem}[Li, Ning, Xu, Zhang \cite{LNXZ2014}]\label{Thm:C3inG}
	Let $G$ be an edge-colored graph on $n$ vertices for $n\geq3$. If $e(G)+c(G) \geq n(n+1)/2$, then $G$ contains a rainbow $C_3$.
\end{theorem}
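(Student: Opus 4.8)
The plan is to prove the contrapositive by induction on $n$: assuming $G$ has no rainbow $C_3$, I will show that $e(G) + c(G) \le n(n+1)/2 - 1$. For the base case $n = 3$, a triangle that is not rainbow uses at most two colors, so $e(G) + c(G) \le 3 + 2 = 5 < 6$, and any sparser graph on three vertices is easier. It is worth noting that, since $n(n+1)/2 = \binom{n}{2} + n$, the target is equivalent to $c(G) \le n - 1 + (\text{number of non-edges})$, which for complete graphs recovers the bound $c(G) \le n-1$ implicit in the Erd\H{o}s--Simonovits--S\'os theorem.

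For the inductive step, I would delete a single vertex $v$ and pass to $G - v$, which still has no rainbow $C_3$ and so satisfies $e(G-v) + c(G-v) \le \binom{n}{2} - 1$ by the induction hypothesis. Writing $d = \deg(v)$ and letting $c_v$ denote the number of \emph{private} colors at $v$ (colors appearing only on edges incident to $v$, i.e. the colors lost when $v$ is removed), we have $e(G) = e(G-v) + d$ and $c(G) = c(G-v) + c_v$. Since $\binom{n+1}{2} - \binom{n}{2} = n$, the entire argument reduces to producing one vertex $v$ with $d + c_v \le n$.

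The key structural observation is that the private colors at $v$ force an independent set. Choosing, for each private color, one representative neighbor, I claim that any two such representatives $w, w'$ are non-adjacent: otherwise the triangle $vww'$ would be rainbow, because the private colors $c(vw)$ and $c(vw')$ cannot reappear on the edge $ww'$ (an edge not incident to $v$), making all three edge-colors distinct. Hence every vertex $v$ carries an independent set of size $c_v$ inside its neighborhood.

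The main obstacle, and the heart of the proof, is converting this local structure into the existence of a good vertex. I would argue by contradiction: if $d + c_v \ge n+1$ for every $v$, then each $v$ has an independent set of size $c_v \ge n + 1 - \deg(v)$ in its neighborhood, so the independence number satisfies $\alpha(G) \ge n + 1 - \delta$, where $\delta$ is the minimum degree. On the other hand, any vertex lying in a maximum independent set has at most $n - \alpha(G)$ neighbors, forcing $\delta \le n - \alpha(G)$. Combining the two bounds yields $n + 1 - \delta \le \alpha(G) \le n - \delta$, a contradiction. This produces a vertex with $d + c_v \le n$, which closes the induction and establishes the theorem.
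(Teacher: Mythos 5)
Your proof is correct. Note that the paper only quotes this theorem from \cite{LNXZ2014} without reproducing a proof, so the natural comparison is with the paper's proof of the $C_4$ analogue (Theorem~\ref{thm_C4inG}) and with the original argument, both of which share your induction skeleton: delete a vertex $v$ and win if $d(v)+d^s(v)\le n$, where $d^s(v)=c(G)-c(G-v)$ is exactly your count of private colors. Where you genuinely diverge is in how the ``no good vertex'' case is killed. The paper's method sums $d(v)+d^s(v)$ over all vertices, invokes Lemma~\ref{lem_CSdegree} ($\sum_v d^s(v)\le 2c(G)$ with equality iff $G$ is rainbow) to conclude that $G$ must be rainbow with many edges, and then finishes with an extremal number ($ex(n,K_3)$ here, K\H{o}vari--S\'os--Tur\'an for $C_4$). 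You instead observe that representatives of distinct private colors at $v$ must be pairwise non-adjacent (an edge between them would close a rainbow triangle, since private colors cannot reappear off $v$), so $\alpha(G)\ge c_v\ge n+1-\delta$, which collides with the trivial bound $\alpha(G)\le n-\delta$ obtained from a vertex of a maximum independent set. This is clean, entirely self-contained, and avoids both the saturated-degree lemma and Tur\'an-type input. The trade-off is that your key step is special to triangles --- it exploits that one edge between two neighbors of $v$ already completes the forbidden cycle --- so it does not transfer to the properly colored $C_4$ setting that is the paper's main concern, whereas the counting-plus-extremal-number route does.
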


The tight lower bound $n$ in Theorem 1 is also called the {\it rainbow number} (or $n-1$ the {\it anti-Ramsey number}) of $C_3$ in some literature. Thus Theorem 2 is a generalization of the rainbow number of $C_3$ from complete graphs to non-complete graphs. Xu et.~al.~\cite{XHWZ2016} studied the existence of rainbow cliques under $e(G)+c(G)$ condition, which also generalized the rainbow number (anti-Ramsey number) of cliques. For more results on anti-Ramsey problems, see \cite{CLT2009,ESS1973,KL2008,MN2002,Sch2004} with a dynamic survey in \cite{FMO2014}.

In Theorem \ref{Thm:C3inG}, when the assumption almost holds but not quite, it is natural to ask what graphs satisfy the assumption but contain no rainbow triangle. This question is answered in the following result.

\begin{theorem}[Fujita, Ning, Xu, Zhang \cite{FNXZ2017+}]\label{Thm:C3inGstruc}
	Let $G$ be an edge-colored graph of order $n$. If $e(G)+c(G)=n(n+1)/2-1$ and $G$ contains no rainbow triangle, then $G$ belongs to $\mathcal{G}_0$ which can be characterized in the following way:
	\begin{enumerate}
		\item $K_1\in \mathcal{G}_0$;
		\item For every $G \in \mathcal{G}_0$ of order $n \geq 2$, $c(G) = n-1$ and there is a partition $V(G)=V_1\cup V_2$ such that $G[V_1,V_2]$ is monochromatic and $G[V_i] \in \mathcal{G}_0$ for $i = 1,2$.
	\end{enumerate}
\end{theorem}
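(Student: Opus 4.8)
The plan is to argue by strong induction on the order $n$, using Theorem~\ref{Thm:C3inG} as the principal engine and peeling off one carefully chosen vertex at each step. The base cases $n=1,2$ are immediate: $K_1\in\mathcal{G}_0$ by definition, and for $n=2$ the hypothesis $e(G)+c(G)=2$ forces a single edge in a single color, which is $K_2$ realized as the join of two copies of $K_1$. For the inductive step, the first move is to extract quantitative consequences of extremality. For a vertex $v$ write $d(v)$ for its degree and $p(v)$ for the number of colors all of whose edges are incident with $v$ (the colors ``private'' to $v$); then $e(G-v)=e(G)-d(v)$ and $c(G-v)=c(G)-p(v)$. Since $G-v$ is also free of rainbow triangles, Theorem~\ref{Thm:C3inG} gives $e(G-v)+c(G-v)\le (n-1)n/2-1$, and substituting the extremal value of $e(G)+c(G)$ yields $d(v)+p(v)\ge n$ for every vertex $v$.

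Next I would locate a vertex where this bound is tight. Counting shows $\sum_v p(v)\le 2c(G)$, because a private color contributes $2$ to the sum only when it is a single edge and $1$ otherwise. If every vertex satisfied $d(v)+p(v)\ge n+1$, then summing would give $n(n+1)\le \sum_v(d(v)+p(v))=2e(G)+\sum_v p(v)\le 2e(G)+2c(G)=n(n+1)-2$, a contradiction. Hence there is a vertex $v_0$ with $d(v_0)+p(v_0)=n$, and for this vertex $e(G-v_0)+c(G-v_0)=(n-1)n/2-1$. Thus $G-v_0$ is extremal and rainbow-triangle-free on $n-1$ vertices, so by induction $G-v_0\in\mathcal{G}_0$. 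Membership then forces $c(G-v_0)=n-2$, and extremality forces $e(G-v_0)=\binom{n-1}{2}$, so $G-v_0$ is a complete graph carrying the nested monochromatic bipartition guaranteed by the definition of $\mathcal{G}_0$, with a top split $W_1\cup W_2$ whose crossing edges all receive one color $\gamma$.

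The remaining and hardest task is the reconstruction: to show that $v_0$ attaches to $G-v_0$ in exactly the way required to place $G$ itself in $\mathcal{G}_0$, which in particular entails proving that $G$ is complete (that $t:=n-1-d(v_0)=0$, equivalently $p(v_0)=1$). Writing $f(w)$ for the color of $v_0w$, the triangles $v_0w_1w_2$ with $w_i\in W_i$ force, via the absence of rainbow triangles, that $\{f(w_1),f(w_2),\gamma\}$ is never a set of three distinct colors. The plan is to split into the case where some crossing edge on each side avoids $\gamma$ --- which pins all colors at $v_0$ to a two-element set $\{\gamma,\beta\}$ with $\beta$ private, forcing $p(v_0)=1$ and $t=0$ --- and the complementary case, in which all of $v_0$'s edges to one side are $\gamma$ and one descends into that side's part of the nested structure. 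In each case one then identifies the correct global bipartition of $G$ (either $\{v_0\}$ against the rest, or $v_0$ merged into one of $W_1,W_2$), checks that its crossing edges are monochromatic and that exactly one new color is introduced, and verifies recursively that both sides lie in $\mathcal{G}_0$.

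I expect the genuine difficulty to be concentrated in this last step, for two reasons. First, establishing completeness (ruling out $t\ge 1$) seems to need the interaction between the private colors at $v_0$ and the rigid nested bipartition of $G-v_0$, not merely the counting above, since that counting is consistent with several tight vertices. Second, confirming that the two sides of the reconstructed split again belong to $\mathcal{G}_0$ requires that the relevant induced subgraphs inherit the recursive structure along the chosen monochromatic cut; verifying this heredity, and controlling the interplay between $\gamma$ and the new color $\beta$ through all triangles $v_0ac$ with $a,c$ on opposite sides of the new cut, is where the careful case analysis lives.
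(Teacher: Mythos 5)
First, a point of comparison: the paper does not actually prove Theorem~\ref{Thm:C3inGstruc}; it is imported verbatim from \cite{FNXZ2017+}, so there is no in-paper argument to measure yours against. Judged on its own merits, your counting skeleton is sound and is in fact the same device this paper deploys in Section~5: your $p(v)$ is the colored saturated degree $d^s(v)$, the bound $\sum_v p(v)\le 2c(G)$ is Lemma~\ref{lem_CSdegree}, the inequality $d(v)+p(v)\ge n$ follows correctly from Theorem~\ref{Thm:C3inG} applied to $G-v$, and the averaging argument legitimately produces a tight vertex $v_0$ with $e(G-v_0)+c(G-v_0)=n(n-1)/2-1$, whence $G-v_0\in\mathcal{G}_0$ and is complete by induction.

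The genuine gap is the reconstruction step, which you leave as a plan rather than a proof, and whose sketch already contains a misstep. In your first case (some $w_1\in W_1$ and $w_2\in W_2$ with $f(w_1),f(w_2)\ne\gamma$), the triangle $v_0w_1w_2$ does force $f(w_1)=f(w_2)=\beta$ and then every edge at $v_0$ is colored $\gamma$ or $\beta$, giving $p(v_0)\le 1$ and hence completeness; but the resulting global bipartition is \emph{not} ``$\{v_0\}$ against the rest'' unless every edge at $v_0$ carries the single color $\beta$. When both $\gamma$ and $\beta$ occur at $v_0$, the correct cut is $V_1=\{v_0\}\cup\{w: f(w)=\beta\}$ against $V_2=\{w: f(w)=\gamma\}$ (one needs that $\beta$ is private to $v_0$ to force the $V_1$--$V_2$ edges lying inside $W_1$ or $W_2$ to be $\gamma$), and these new parts cut across $W_1$ and $W_2$. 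You must then show that \emph{both} induced pieces again lie in $\mathcal{G}_0$, but the inductive hypothesis hands you this only for $G[W_1]$ and $G[W_2]$, not for these mixtures; the color count $n-1$ must also be shown to distribute correctly between the two sides. The same heredity problem recurs in your second case. Closing it requires either a further induction inside the parts or an appeal to Gallai's partition theorem for rainbow-triangle-free complete colorings together with a color count; as written, the proposal asserts the conclusion of the hardest step rather than deriving it.
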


In this work, we extend Theorems~\ref{Thm:C3inG} and~\ref{Thm:C3inGstruc} to consider a properly colored cycle $C_{4}$ in place of the rainbow triangle. Our first main result count only the number of colors appearing in a complete graph $K_{n}$.

\begin{theorem}\label{Thm:nP1C4}\label{thm_C4inKn}
If $G$ is an edge-colored complete graph $K_n$ using at least $n+1$ colors, then $G$ contains a properly colored $C_4$.
\end{theorem}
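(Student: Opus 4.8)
The plan is to prove the equivalent contrapositive statement: if an edge-coloring of $K_n$ contains no properly colored $C_4$, then it uses at most $n$ colors. I will argue by induction on $n$. The cases $n\le 3$ are immediate, since then $K_n$ has at most $\binom{n}{2}\le n$ edges (and no $C_4$ at all), so at most $n$ colors appear. For the inductive step, call a color \emph{private to a vertex $v$} if every edge of that color is incident with $v$. The colors of $G$ split into those that survive in $G-v$ and those private to $v$, so that $c(G)=c(G-v)+p(v)$, where $p(v)$ denotes the number of colors private to $v$. Since $G-v\cong K_{n-1}$ also has no properly colored $C_4$, the induction hypothesis gives $c(G-v)\le n-1$. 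Hence it suffices to exhibit one vertex $v$ with $p(v)\le 1$, for then $c(G)\le (n-1)+1=n$.

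The core of the argument is therefore the following statement, which I will call the \textbf{Key Lemma}: every coloring of $K_n$ with $n\ge 4$ and no properly colored $C_4$ has a vertex $v$ with $p(v)\le 1$. I would prove it by contradiction, assuming every vertex carries at least two private colors. Fix a vertex $u$ together with two edges $ux$ and $uy$ whose colors $a\ne b$ are both private to $u$ (distinct private colors force distinct endpoints $x\ne y$). The key observation is that for any fourth vertex $z$, the $4$-cycle $x\,u\,y\,z\,x$ is automatically properly colored at three of its four corners: $c(ux)=a\ne b=c(uy)$ because $a\ne b$, while $c(uy)\ne c(yz)$ and $c(zx)\ne c(xu)$ because $a,b$ are private to $u$ and the edges $yz,zx$ miss $u$. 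Thus this $4$-cycle is properly colored as soon as $c(yz)\ne c(zx)$. Since $G$ has no properly colored $C_4$, we must have $c(yz)=c(zx)$ for every $z\notin\{u,x,y\}$; that is, $x$ and $y$ receive the same color toward every other vertex except $u$.

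Pushing this further drives the coloring into a very rigid shape. Using that $x$ also has two private colors, one finds that only the edges $xu$ and $xy$ can carry colors private to $x$, and symmetrically for $y$; analysis of further $4$-cycles then forces $u$, $x$ and $y$ to agree in color toward every remaining vertex, with the triangle $uxy$ rainbow on three otherwise-unused colors $a,b,d$. I would then seek a contradiction from this configuration by producing a properly colored $C_4$ using an edge inside $W=V\setminus\{u,x,y\}$ together with two triangle edges: writing $e_z$ for the common color of $u,x,y$ toward $z\in W$, the cycle $z_1\,z_2\,x\,y\,z_1$ is properly colored unless the color of $z_1z_2$ coincides with $e_{z_1}$ or $e_{z_2}$, thereby either exhibiting the forbidden $C_4$ or collapsing enough colors to contradict the standing assumption.

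The main obstacle is precisely this last step: the local identity $c(G)=c(G-v)+p(v)$ has genuine slack, as the case $n=5$ already shows, where forbidding the properly colored $C_4$ forces an edge inside $W$ to \emph{reuse} an existing color rather than being detected vertex-by-vertex. Consequently the Key Lemma cannot be closed by purely local, single-vertex bookkeeping; completing it requires exploiting the global cycle constraints on the edges within $W$ to rule out the rigid ``agreement'' configuration. An alternative route that sidesteps the twin degeneracy is to take a rainbow representing subgraph $R$ with one edge per color, note that $e(R)=c(G)\ge n+1$ forces at least two independent cycles in $R$, and then use chords of $K_n$ to shortcut a long rainbow cycle down to a properly colored $C_4$; there the difficulty instead becomes controlling the colors of the chords.
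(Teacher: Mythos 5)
Your reduction is sound and, up to vocabulary, is the same first move as the paper's: the paper's ``colored saturated degree'' $d^s(v)=c(G)-c(G-v)$ is exactly your $p(v)$, and its minimal-counterexample step is your induction, so both proofs hinge on producing one vertex with at most one private color. After that the two arguments diverge completely. The paper never analyzes local structure: it notes that a color class contributes $2$, $1$ or $0$ to $\sum_v p(v)$ according as it is a single edge, a larger star, or neither, so the assumption $p(v)\ge 2$ for all $v$ gives $2n\le\sum_v p(v)\le 2c(G)$; this forces at least $n-1$ single-edge color classes, and $ex(n,2K_2)=n-1$ (Erd\H{o}s--Gallai) then yields a rainbow $2K_2$ whose two colors are single-edge classes, which spans a properly colored $C_4$ in $K_n$ at once. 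Your route replaces this counting by a structural ``twin'' analysis, which is a genuinely different (and arguably more self-contained) way to finish.

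The genuine gap is the final step, which you flag yourself: you derive the rigid configuration (a rainbow triangle $uxy$ on three single-edge colors $a,b,d$, with every $z\in W=V\setminus\{u,x,y\}$ joined to all of $u,x,y$ in one color $e_z$) but never extract the contradiction, and your claim that ``the Key Lemma cannot be closed by purely local, single-vertex bookkeeping'' is wrong -- it closes with one more application of the very cycle you wrote down. Take any $z\in W$ (nonempty since $n\ge 4$) and any color $c'$ private to $z$ with $c'\ne e_z$. All edges from $z$ to $\{u,x,y\}$ have color $e_z$, so $c'$ lies on some edge $zw$ with $w\in W$. The cycle $z\,w\,x\,y\,z$ carries colors $c',e_w,d,e_z$ in order; the corners at $x$ and $y$ are proper because $d$ occurs only on the edge $xy$, and the corner at $z$ is proper because $c'\ne e_z$; so the absence of a properly colored $C_4$ forces $c'=e_w$. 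But $e_w$ also colors the edge $wu$, which misses $z$, so $c'$ was never private to $z$. Hence $p(z)\le 1$, contradicting the standing hypothesis that every vertex has two private colors, and the Key Lemma -- and with it the theorem -- follows. (Your worry about ``slack'' at $n=5$ is a red herring: the induction needs only one vertex with $p\le 1$, and every vertex of $W$ supplies one.) As submitted, however, the proposal is missing exactly this concluding argument, so it is not yet a complete proof.
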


In the process of preparing this paper, Li et al.\ \cite{LBZ2019} proved that under the same condition $G$ contains either a properly colored $K_4-e$ or a properly colored $C_5$ together with a chord which both contains a properly colored $C_4$. We present a proof of the above theorem in Section $3$ for completeness.

\begin{theorem}\label{Thm:Classify}\label{thm_C4inKnstruc}
If $G$ is an edge-colored complete graph $K_{n}$ using $n$ colors which contains no properly colored $C_{4}$, then $G$ contains either
\bd
\item{(1)} a vertex with all incident edges in a single color,
\item{(2)} a rainbow triangle $uvw$, say with $uv$ in red, $vw$ in blue, and $uw$ in green, with all other incident edges of $u$ being red, all other incident edges of $v$ being blue, and all other incident edges of $w$ being green, or
\item{(3)} a vertex with all incident edges in distinct colors while all other edges of $G$ have a single (distinct) color.
\ed
\end{theorem}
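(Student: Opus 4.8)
The plan is to argue by induction on $n$, with Theorem~\ref{thm_C4inKn} serving as the counting engine that controls how many colors can disappear when a vertex is deleted. The base case $n=3$ is immediate: a $3$-coloring of $K_3$ is a rainbow triangle, which (vacuously, since no further incident edges exist) realizes configuration~(2). For the inductive step, fix a vertex $v$ and apply Theorem~\ref{thm_C4inKn} to $G-v\cong K_{n-1}$: since $G-v$ again has no properly colored $C_4$, it uses at most $n-1$ colors. Writing $p(v)$ for the number of \emph{private} colors at $v$ (colors all of whose edges meet $v$), one gets $p(v)=c(G)-c(G-v)\ge n-(n-1)=1$, so \textbf{every vertex has at least one private color}. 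This single inequality is what makes the boundary case rigid.

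The main tool is a twin lemma: if $v$ is incident to edges $va$ and $vb$ carrying two \emph{distinct} private colors $\alpha\neq\beta$, then $c(ax)=c(bx)$ for every $x\notin\{v,a,b\}$. Indeed, in the $4$-cycle $(v,a,x,b)$ the corner at $v$ is proper since $\alpha\neq\beta$, while the corners at $a$ and $b$ are proper because $\alpha,\beta$ are private to $v$ and so cannot reappear on $ax$ or $bx$; avoiding a properly colored $C_4$ then forces the corner at $x$ to fail, i.e. $c(xa)=c(xb)$. Thus two distinct private colors at a vertex produce a pair of \emph{twins} agreeing on all colors to the rest of $G$. This enables a reduction: deleting one twin loses at most the single color on the edge joining the twins, so the reduced graph again lies on the boundary $c=n-1$, and the inductive hypothesis applies.

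I then organize the induction around $p(v)$:
\begin{enumerate}
\item if some vertex is monochromatic, configuration~(1) holds directly;
\item if some vertex carries two distinct private colors, the twin lemma supplies a twin pair; deleting one twin returns a boundary coloring of $K_{n-1}$ to which the inductive classification applies, and re-inserting the twin promotes the returned structure. It is here that configurations~(2) and~(3) are built up (both degenerate to a rainbow triangle at $n=3$ and grow by successive twin additions);
\item otherwise every vertex is non-monochromatic with exactly one private color, an \emph{irreducible} case treated directly by analyzing a vertex of maximum color degree and the cycles through it.
\end{enumerate}

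The main obstacle is twofold. First, the re-insertion in case~(2) needs delicate bookkeeping: I must pin down the exact color of the edge joining the twins, verify that no properly colored $C_4$ is created through the reinserted vertex, and confirm that each of the three inductive outcomes maps to a legitimate outcome for $G$ — including the exceptional small orders (configuration~(2) cannot occur for $n\in\{4,5\}$) so that the count $c(G)=n$ is never violated. Second, and more seriously, the irreducible case must be shown either to collapse into one of the named configurations or to be impossible; controlling it rests on combining the global identity $\sum_v p(v)=2s+t$, where $s,t,r$ count the single-edge, monochromatic-star, and spread colors with $s+t+r=n$, against the local no-$C_4$ constraints, and I expect this to be where the real work lies.
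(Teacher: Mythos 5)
Your twin lemma is correct and is a genuinely different starting point from the paper, which does not induct on $n$ at all: the paper invokes Lemma~\ref{Lemma:Dominating} to make every color class a threshold graph with a dominating vertex, sets up spines, heads and tails, proves via an auxiliary digraph of ``lonely edges'' and a tournament argument that some vertex is not the head of any spine, and then classifies the coloring around a ``shared head''. Unfortunately your outline has genuine gaps that keep it from being a proof rather than a plan.

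First, the reduction in your case~2 rests on the claim that deleting one twin loses at most the single color on the edge joining the twins, so that the reduced $K_{n-1}$ still sits on the boundary $c=n-1$. That is false as stated: with $c(vb)=\beta$ private to $v$, deleting the twin $b$ can also kill $\beta$ (namely when $vb$ is the unique $\beta$-edge), so $c(G-b)$ may drop to $n-2$ and the inductive hypothesis no longer applies; you would need an additional argument (choosing which twin to delete, or switching the base vertex to $b$, which itself then has two private colors) to guarantee you land on the boundary. Second, even when the induction does apply, the ``promotion'' step --- showing that each of Structures~(1), (2), (3) in $G-b$ lifts to one of the three structures in $G$ after reinserting $b$ --- is exactly where the content of the theorem lives, and it is only announced, not carried out; for instance, if $G-b$ has Structure~(1) at a vertex $w$, the single edge $wb$ can destroy that structure, and one must then locate Structure~(2) or~(3) instead. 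Third, your ``irreducible'' case (every vertex non-monochromatic with exactly one private color) is explicitly left open, yet this is essentially the entire difficulty: in the paper it occupies Claims~\ref{Claim:NonHead}--\ref{Claim:SingleEdge} and the subsequent analysis of the sets $Z$ and $Z'$, and it is precisely where Structure~(3) (the rainbow star plus one monochromatic remainder) emerges. Until the last two items are supplied, the proposal does not establish the classification.
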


\begin{figure}[bht]
	\centering
	\begin{tikzpicture}[x=0.8cm, y=0.8cm]
	\vertex (x1) at (0,0) [label=left:{$v$}]{};
	\vertex (y1) at (6,0) [label=below:{$u$}]{};
	\vertex (y2) at (5,1) [label=above:{$v$}] {};
	\vertex (y3) at (5,-1) [label=below:{$w$}] {};
	\vertex (z1) at (11,0) [label=left:{$v$}]{};
	
	\path (x1) edge (1.6,1.1) edge (1.4,0) edge (1.6,-1.1);
	\path (y1) edge (y2) edge (7.4,0.3) edge (7.4,-0.3);
	\path[dotted] (y2) edge (y3) edge (7.6,0.7) edge (7.8,1.2);
	\path[dashed] (y3) edge (y1) edge (7.6,-0.7) edge (7.8,-1.2);
	\path[dotted] (z1) edge (12.6,1.1);
	\path[dashed] (z1) edge (12.4,0.3);
	\path[dashdotted] (z1) edge (12.4,-0.3);
	\path (z1) edge (12.6,-1.1);

	\node (q1) at (2,-1.5) [label=below:{$G-v$}]{};
	\node (q2) at (8,-1.5) [label=below:{$G-\{u,v,w\}$}]{};
	\node (q3) at (13,-1.5) [label=below:{$G-v$}]{};
	
	\draw (2,0) ellipse (1 and 1.5) node {};
	\draw (8,0) ellipse (1 and 1.5) node {};
	\draw (13,0) ellipse (1 and 1.5) node {};
	
	\end{tikzpicture}
	\caption{The possible structures of the edge-colorings of $K_{n}$ with $n$ colors which contains no properly colored $C_{4}$}.
	\label{Fig:C4inKnstruc}
\end{figure}
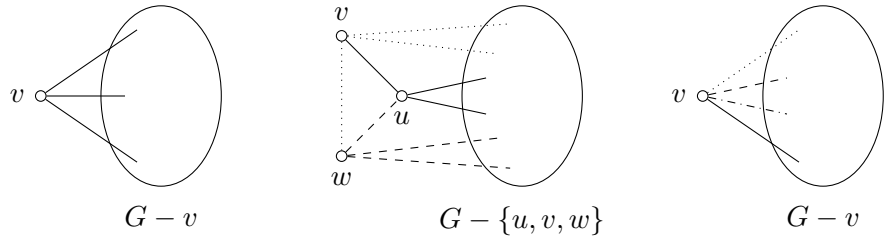

Then, we consider conditions on the number of edges plus the number of colors in a colored graph $G$.

\begin{theorem}\label{thm_C4inG}
Let $G$ be an edge-colored graph with order $n$. If $e(G)+c(G)\geq \frac{n(n+1)}{2}+1$, then $G$ contains a properly colored $C_4$. Moreover, if $e(G)+c(G)=\frac{n(n+1)}{2}$ and $G$ contains no properly colored $C_{4}$, then $G$ is complete and thus has a layered structure characterized by Theorem \ref{thm_C4inKnstruc}.
\end{theorem}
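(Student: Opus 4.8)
The plan is to prove, by induction on $n$, the strengthened inequality: if $G$ has no properly colored $C_4$, then $e(G)+c(G)\le \frac{n(n+1)}{2}$, with equality forcing $G$ to be complete. The first statement of the theorem is then the contrapositive of this inequality, and the ``moreover'' part is the equality case combined with Theorem \ref{thm_C4inKnstruc}. For complete $G$ the inequality is immediate: $e(G)=\frac{n(n-1)}{2}$, and by the contrapositive of Theorem \ref{thm_C4inKn} the absence of a properly colored $C_4$ forces $c(G)\le n$, whence $e(G)+c(G)\le \frac{n(n-1)}{2}+n=\frac{n(n+1)}{2}$, with equality exactly when $c(G)=n$, at which point Theorem \ref{thm_C4inKnstruc} supplies the structure. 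So all the work lies in reducing a non-complete $G$ to smaller graphs, with small $n$ serving as trivial base cases.

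The reduction is by vertex deletion. For a vertex $v$ write $t_v$ for the number of colors appearing \emph{only} on edges incident to $v$ (its \emph{private} colors); then deleting $v$ removes exactly $d(v)$ edges and exactly $t_v$ colors, giving the telescoping identity $e(G)+c(G)=\bigl[e(G-v)+c(G-v)\bigr]+d(v)+t_v$. Since $G-v$ also has no properly colored $C_4$, induction yields $e(G-v)+c(G-v)\le \frac{(n-1)n}{2}$, so the target bound $\frac{n(n+1)}{2}$ follows as soon as we produce a vertex with $d(v)+t_v\le n$; and a \emph{strict} bound $d(v)+t_v\le n-1$ at a single deletion step gives $e(G)+c(G)\le \frac{n(n+1)}{2}-1$, which is precisely what the completeness claim requires.

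Thus everything rests on a good vertex lemma: every edge-colored graph $H$ on $m$ vertices with no properly colored $C_4$ has a vertex $v$ with $d(v)+t_v\le m$, and if $H$ is not complete it has one with $d(v)+t_v\le m-1$. Writing $\bar d(v)=m-1-d(v)$ for the number of non-neighbors of $v$, these read $t_v\le \bar d(v)+1$ and $t_v\le \bar d(v)$, so the lemma is an upper bound on private colors in terms of non-adjacency. The engine is that private colors are hard to hide: if $v$ has two private colors $\alpha,\beta$ on edges $va,vb$, then for any common neighbor $x\ne v$ of $a$ and $b$ the four-cycle $vaxb$ is automatically proper at its two edges through $v$ (since $\alpha,\beta$ cannot reappear off $v$), so $c(ax)\ne c(xb)$ would already exhibit a properly colored $C_4$. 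More generally, an edge carrying a color private to one of its endpoints cannot match the color of an adjacent edge, so a chain of privately-colored edges is automatically properly colored and threatens to close into a properly colored $C_4$.

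I expect this lemma to be the main obstacle. The subtlety is that a single vertex may legitimately carry many private colors without any properly colored $C_4$ arising — the rainbow-star center in case (3) of Theorem \ref{thm_C4inKnstruc} has $m-1$ private colors — so one cannot bound $t_v$ uniformly and must instead prove that \emph{some} vertex, never the ``center'', is good. The natural route is to assume for contradiction that every vertex fails the bound (so every vertex has at least two private colors beyond its non-neighbor allowance), to separate private colors into pendant colors, which lie on a single edge and are private to both endpoints, and genuine private stars with at least two edges through one vertex, and then to run the common-neighbor/chain mechanism above — for instance along a longest properly colored path assembled from privately-colored edges — until a properly colored $C_4$ is forced. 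The strict refinement for non-complete $H$ should follow from the extra slack produced by a pair of non-adjacent vertices, which simultaneously lowers $d(v)$ and restricts how the private colors at a minimum-degree vertex can be realized.
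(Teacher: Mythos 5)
Your skeleton---induction on $n$, the telescoping identity $e(G)+c(G)=\bigl[e(G-v)+c(G-v)\bigr]+d(v)+t_v$ (your $t_v$ is the paper's colored saturated degree $d^s(v)$), and the complete base case via Theorems~\ref{thm_C4inKn} and~\ref{thm_C4inKnstruc}---matches the paper's proof in spirit. But the entire content of the argument is deferred to your ``good vertex lemma,'' which you do not prove, and which you yourself flag as the main obstacle. That is a genuine gap, and the sketch you offer for closing it (chains of privately colored edges along a longest properly colored path) is not close to a proof; note that simple averaging cannot produce the vertex you want, since $\sum_v\bigl[d(v)+t_v\bigr]\le 2e(G)+2c(G)\le n(n+1)$ only forces some vertex with $d(v)+t_v\le n$ when $e(G)+c(G)$ is already at the extremal value, and even then only after the inequality $\sum_v t_v\le 2c(G)$ is made strict. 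The paper gets the good vertex by exactly this counting route (Lemma~\ref{lem_CSdegree}: $\sum_v d^s(v)\le 2c(G)$ with equality iff $G$ is rainbow), after first disposing of the rainbow case separately via the K\H{o}vari--S\'os--Tur\'an bound $ex(n,C_4)\le\frac{1}{2}n^{3/2}+\frac{1}{2}n$; neither of these ingredients appears in your plan.

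The second, more serious problem is your route to completeness. You propose to find, in a non-complete graph, a vertex with the \emph{strict} bound $d(v)+t_v\le n-1$. In the equality case $e(G)+c(G)=\frac{n(n+1)}{2}$ the paper proves (its Claim 2) that \emph{every} vertex satisfies $d(v)+d^s(v)\ge n$, because deleting a vertex violating this would leave a graph with $e+c\ge\frac{n(n-1)}{2}+1$, which by induction already contains a properly colored $C_4$. So the strict vertex you hope to delete provably does not exist in the only situation where you need it, and completeness cannot be extracted from the deletion step alone. The paper instead deletes the vertex $u$ with $d(u)+d^s(u)=n$ exactly, applies the inductive structural conclusion (Theorem~\ref{thm_C4inKnstruc}) to $G-u$, and then uses the rainbow-star center $s$ and its leaves $l_1,l_2$ to exhibit a rainbow $C_4$ through $u$ whenever $u$ has a non-neighbor. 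This use of the structure theorem inside the induction is essential and is missing from your proposal.
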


Given an edge-colored graph $G$ with a vertex $v \in V(G)$, let $d^{c}(v)$ denote the color degree of $v$, that is, the number of colors appearing on edges incident to $v$. Li et.~al.~\cite{LNXZ2014} showed that Theorem \ref{Thm:C3inG} implies: $\sum_v d^c_G(v)\geq n(n+1)/2$ is sufficient for the existence of a rainbow $C_3$. This result confirmed (also by Li \cite{Li2013}, independently) a conjecture due to Li and Wang \cite{LW2012} which states that $d^c_G(v)\geq (n+1)/2$ for all $v\in (G)$ is sufficient. In section \ref{Sec:relation} we discussed the relationship between $e(G)+c(G)$ and $\sum_v d^c_G(v)$. As a corollary, we can deduce the following result.

\begin{theorem}\label{Thm:DegSumforC4}
Let $G$ be an edge-colored graph with order $n$ with $n \geq 4$. If $\sum_v d^c(v)\geq \frac{n(n+1)}{2}+1$, then $G$ contains a properly colored $C_4$.  Moreover, if $\sum_v d^c(v)= \frac{n(n+1)}{2}$ and $G$ contains no properly colored $C_4$, then $G$ is complete and has a layered structure characterized by Theorem~\ref{thm_C4inKnstruc} with its center part a rainbow triangle.
\end{theorem}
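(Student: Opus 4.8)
The plan is to deduce this theorem from Theorem~\ref{thm_C4inG} by relating the color-degree sum $\sum_v d^c(v)$ to the quantity $e(G)+c(G)$, which is the purpose of Section~\ref{Sec:relation}. The central device is a recoloring: given $G$, form $G'$ on the same vertex set by keeping every edge but splitting each color class into its connected components and giving each component a fresh, pairwise distinct color. First I would record three facts. Clearly $e(G')=e(G)$. Next, each vertex meets a given old color in exactly one component (all its edges of that color share that vertex, hence lie in one component), so it still sees exactly as many colors, giving $\sum_v d^c_{G'}(v)=\sum_v d^c_G(v)$. The \emph{key observation} is that $G'$ contains a properly colored $C_4$ if and only if $G$ does: two adjacent edges lie in the same component of their color, so they receive equal colors in $G'$ exactly when they did in $G$, and since proper coloring of a $C_4$ constrains only consecutive (adjacent) edges, the recoloring neither creates nor destroys a properly colored $C_4$.

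Because every color class of $G'$ is connected, it spans at least its number of vertices minus one, so summing over colors yields $\sum_v d^c_{G'}(v)=\sum_i |V_i'|\le \sum_i(|E_i'|+1)=e(G')+c(G')$, and therefore $e(G')+c(G')\ge \sum_v d^c_G(v)$. For the first statement, if $\sum_v d^c(v)\ge \frac{n(n+1)}{2}+1$ then $e(G')+c(G')\ge \frac{n(n+1)}{2}+1$, so Theorem~\ref{thm_C4inG} produces a properly colored $C_4$ in $G'$, hence in $G$.

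For the extremal case, suppose $\sum_v d^c(v)=\frac{n(n+1)}{2}$ and $G$ has no properly colored $C_4$. Then $G'$ has none either and satisfies $e(G')+c(G')\ge \frac{n(n+1)}{2}$; were this strict, Theorem~\ref{thm_C4inG} would give a properly colored $C_4$, so $e(G')+c(G')=\frac{n(n+1)}{2}$. This forces equality above, i.e.\ every color class of $G'$ is a tree, and by the moreover part of Theorem~\ref{thm_C4inG} it forces $G'$ (hence $G$, since $e(G)=e(G')$) to be complete with $c(G')=n$. I would then show $G=G'$: in a complete graph with no properly colored $C_4$ no color class can be disconnected, since choosing a monochromatic edge inside each of two components $A,B$ of one color yields a $C_4$ whose two opposite edges carry that color while its other two (cross) edges exist by completeness and avoid that color precisely because $A,B$ are distinct components, making the $C_4$ properly colored, a contradiction. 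Thus no splitting occurred and $G=G'$, so $G$ itself is a complete $K_n$ with $n$ colors, no properly colored $C_4$, and all color classes trees; Theorem~\ref{thm_C4inKnstruc} applies to $G$.

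It remains to identify the center as a rainbow triangle. Peeling the layered structure, each structure-(1) step removes one vertex and one new color and each structure-(2) step removes three vertices and three new colors, so both preserve the balance (number of colors) $=$ (number of vertices); since $c(G)=n$ equals the order, the terminal block $B$ must satisfy $c(B)=|V(B)|$. The all-trees condition rules out structure~(3) on four or more vertices, since its monochromatic clique on the non-center vertices contains a cycle; terminating at $K_1$ or $K_2$ would leave only $n-1$ colors, so the only balanced, all-tree terminal block is the rainbow triangle. Hence the center is a rainbow triangle, as claimed. The main obstacle is exactly this last structural step: matching the equality case of the color-degree bound to the layered description and excluding every terminal configuration but the rainbow triangle. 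The recoloring reduction and the counting inequality are comparatively routine once the component-splitting idea is in hand.
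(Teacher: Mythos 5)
Your argument is correct and rests on the same core device as the paper: split every color class into its connected components, give each component a fresh color, observe that this operation preserves both $\sum_v d^c(v)$ and the (non)existence of a properly colored $C_4$, and then invoke Theorem~\ref{thm_C4inG}. Where you differ is in how the comparison between $\sum_v d^c(v)$ and $e(G)+c(G)$ is obtained: the paper (Section~\ref{Sec:relation}) first passes to an edge-minimal subgraph so that every color class becomes a star forest and then proves $e(G)+c(G)\le\sum_v d^c(v)$ via an orientation argument (Theorems~\ref{thm_starforest} and~\ref{thm_relation}), whereas you get $\sum_v d^c(v)=\sum_i|V_i'|\le\sum_i\bigl(|E_i'|+1\bigr)=e(G')+c(G')$ directly from connectivity of the new color classes, with no edge deletion and no orientation; this is shorter and hands you the equality case (all classes are trees) for free. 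Your treatment of the extremal case is also more complete than what the paper writes: the paper asserts in one line that the center is a rainbow triangle and that $G\cong G_1$ ``since no two color classes are disjoint,'' while you justify both --- the $2K_2$-inside-one-color argument showing that no color class of a complete PC-$C_4$-free graph can be disconnected, and the balance count (number of colors minus number of vertices is invariant under peeling Structures (1) and (2)) that excludes every terminal block except the rainbow triangle. One small point worth making explicit in a final write-up: the claim that each peeling step removes exactly as many colors as vertices requires observing that a peeled color surviving into the remainder would leave a $K_m$ colored with more than $m$ colors, contradicting Theorem~\ref{Thm:nP1C4}; you use this implicitly and it is exactly the right justification.
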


In the rest of this paper, some lemmas are given in Section 2, and the proofs of Theorems 4--7 are postponed to Sections 3--6, respectively. In the last section, we discuss the structure of edge-colored complete graphs containing no properly colored $C_4$. This is motivated by the study of edge-colored complete graphs containing no rainbow $C_3$, called Gallai-Ramsey problems (see \cite{FMO2014} for a survey).

\section{Preliminaries}

We first state a helpful lemma on the structure of edge-colored complete graphs containing no properly colored $C_{4}$.

\begin{lemma}[Magnant, Martin, Salehi Nowbandegani \cite{MMS2018}]\label{Lemma:Dominating}
Let $G$ be an edge-colored complete graph containing no properly colored $C_{4}$ and for each color $i$, let $G^{i}$ be the subgraph of $G$ containing only edges of color $i$ (not including isolated vertices). Then each $G^{i}$ contains a dominating vertex.
\end{lemma}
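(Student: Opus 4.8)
The plan is to prove the contrapositive-style structural claim directly: assuming $G$ is an edge-colored complete graph with no properly colored $C_4$, I must show that within each color class $G^i$ there is a vertex adjacent (in color $i$) to every other vertex of $G^i$. Let me think about what a properly colored $C_4$ forbids. A $C_4$ on vertices $a,b,c,d$ with edges $ab,bc,cd,da$ is properly colored iff consecutive edges differ in color, i.e. $\mathrm{col}(ab)\neq\mathrm{col}(bc)$, $\mathrm{col}(bc)\neq\mathrm{col}(cd)$, $\mathrm{col}(cd)\neq\mathrm{col}(da)$, $\mathrm{col}(da)\neq\mathrm{col}(ab)$. So the key observation is: if I have two vertices $x,y$ joined by a color-$i$ edge, and I look at how $x$ and $y$ relate to some third vertex set, forbidding a PC $C_4$ forces structural restrictions.

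Let me focus on a single color class $G^i$. Suppose for contradiction $G^i$ has no dominating vertex. So for every vertex $v$ of $G^i$, there is some vertex of $G^i$ not joined to $v$ by a color-$i$ edge. I want to extract a properly colored $C_4$.

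Key sub-lemma I'd aim for: in $G^i$, any two edges that are vertex-disjoint (a matching of size 2) give a PC $C_4$. Take two disjoint color-$i$ edges $ab$ and $cd$. Consider the 4-cycle $a,c,b,d$ (i.e. edges $ac,cb,bd,da$). Since $G$ is complete these all exist. Now $ab$ and $cd$ are color $i$, but the four edges of THIS cycle are $ac,cb,bd,da$ — these need not be color $i$. Hmm, that's the wrong cycle. Let me instead consider cycle $a,b,c,d$ in order. Its edges are $ab$ (color $i$), $bc$ (some color), $cd$ (color $i$), $da$ (some color). For this to be PC I'd need $bc\neq i$, $bc\neq$ ... wait consecutive: $ab,bc$ differ means $bc\neq i$; $bc,cd$ differ means $bc\neq i$ (same); $cd,da$ differ means $da\neq i$; $da,ab$ differ means $da\neq i$. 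So cycle $abcd$ is PC iff $bc\neq i$ and $da\neq i$. But $bc$ or $da$ could be color $i$. If $bc=i$, then $abc$ form a color-$i$ path; if also $da=i$ we get more color-$i$ structure. So actually the obstruction to finding a PC $C_4$ from a matching $\{ab,cd\}$ in $G^i$ is precisely that the "crossing" edges force color $i$.

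So here is the refined plan. For the color class $G^i$, consider its structure as a graph. The claim that $G^i$ has a dominating vertex is equivalent, by a known graph-theoretic fact, to $G^i$ not containing $2K_2$ as an induced subgraph together with connectivity constraints — actually a graph has a dominating vertex iff... let me recall: a graph $H$ has a vertex adjacent to all others iff its complement has an isolated vertex. So I want: $\overline{G^i}$ (complement within $V(G^i)$) has an isolated vertex, equivalently $G^i$ has a universal vertex. The plan: show that if $G^i$ has two vertices $a,b$ that are non-adjacent in $G^i$ and another pair $c,d$ non-adjacent in $G^i$ with appropriate disjointness, I can build a PC $C_4$; then argue the non-adjacency structure collapses to leave a universal vertex. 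Concretely, I'd show $G^i$ contains no induced $2K_2$ and no induced $P_4$ or similar, and combine with connectivity to force a dominating vertex. The main obstacle I expect is the combinatorial case analysis translating "no PC $C_4$ in the complete host" into the exact forbidden configurations of $G^i$, and then proving that those forbidden-subgraph constraints are strong enough to guarantee a universal vertex rather than merely a dominating set — handling disconnected $G^i$ and carefully using completeness of $G$ to realize the cycles is the delicate part.

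\medskip

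Here is the concrete outline I would follow.

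\begin{proof}[Proof sketch]
Fix a color $i$ and write $H = G^i$ for the (vertex-induced) subgraph on the edges of color $i$. Assume, for contradiction, that $H$ has no dominating vertex; I derive a properly colored $C_4$ in $G$.

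\textbf{Step 1 (forbidden matching).} First I claim $H$ cannot contain two vertex-disjoint edges $ab$ and $cd$ such that the ``crossing'' edges avoid color $i$. Indeed, if both $ad$ and $bc$ have color different from $i$, then the cycle $a\,b\,c\,d$ has edges $ab\ (=i),\ bc\ (\neq i),\ cd\ (=i),\ da\ (\neq i)$, which is properly colored. Hence for every pair of disjoint color-$i$ edges $ab,cd$, at least one of the joining edges $ad,bc$ must again have color $i$; similarly for the other pairing $ac,bd$. This rigidity forces the color-$i$ edges among any four such vertices to be plentiful.

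\textbf{Step 2 (structure of $H$).} Using Step 1 repeatedly, I analyze a maximum matching $M$ in $H$. If $|M|\geq 2$, the crossing constraints from Step 1 pin down which additional edges must be color $i$, and I show these additional color-$i$ edges in turn create a disjoint pair whose crossings are free to differ, yielding a properly colored $C_4$ — contradiction. This should reduce to the case where $H$ has matching number $1$, i.e.\ $H$ is a union of a star together with isolated vertices (a star $K_{1,t}$ possibly plus isolated vertices of $G$ not incident to color $i$).

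\textbf{Step 3 (conclusion).} A star $K_{1,t}$ has its center adjacent (in color $i$) to every non-isolated vertex; since isolated vertices carry no color-$i$ edge, the center dominates $H$. This contradicts the assumption that $H$ has no dominating vertex, completing the proof.
\end{proof}

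The step I expect to be genuinely delicate is Step 2: converting the local crossing constraints into a clean global statement (matching number one) requires care, because a single disjoint pair only forbids one crossing pattern, and I must chain several applications — and use the completeness of $G$ to guarantee every candidate cycle actually exists — to rule out a matching of size two without gaps.
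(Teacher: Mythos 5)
The paper does not actually prove this lemma; it imports it from \cite{MMS2018} and only supplies the supporting characterization of threshold graphs (Lemma~\ref{Lemma:Forbid}). So your attempt must be judged on its own merits, and it contains a genuine error in Step~2. You claim that a matching of size $2$ in $G^{i}$ can be parlayed into a properly colored $C_{4}$, so that $G^{i}$ must be a star. This is false. Take $K_{4}$ on $\{u,a,b,c\}$ with color $1$ on $ua, ub, uc, ab$ and color $2$ on $ac, bc$: every $4$-cycle here contains two adjacent edges of the same color, so there is no properly colored $C_{4}$, yet color class $1$ contains the disjoint edges $uc$ and $ab$. Your Step~1 is consistent with this example (one crossing edge, namely $bu$ or $au$, is indeed color $1$), but the ``rigidity'' does not cascade to a contradiction --- it simply certifies that $u$ is a dominating vertex. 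The lemma's conclusion is that $G^{i}$ has a \emph{dominating vertex}, not that $G^{i}$ is a star; a graph with a dominating vertex can have arbitrarily large matching number, so any strategy that tries to bound the matching number of $G^{i}$ by $1$ is doomed.

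The fix is the route you mention in your preamble and then abandon: what a missing properly colored $C_{4}$ forbids is an \emph{induced} $2K_{2}$, an induced $P_{4}$, or an induced $C_{4}$ inside $G^{i}$. (For an induced $2K_{2}$ on $ab, cd$ the cycle $abcda$ is properly colored since $bc$ and $da$ are genuinely not color $i$; for an induced $P_{4}$ $a\!-\!b\!-\!c\!-\!d$ or an induced $C_{4}$ the cycle $abdca$ works, using that $bd$ and $ca$ are non-edges of $G^{i}$ while $ab$ and $dc$ have color $i$.) By Lemma~\ref{Lemma:Forbid} this makes each $G^{i}$ a threshold graph, and since $G^{i}$ by definition has no isolated vertices, the last vertex in its threshold construction sequence must be a dominating one. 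Your Step~1 only establishes the induced-$2K_{2}$ exclusion (and only half of it, since you phrase it for arbitrary disjoint edges rather than induced ones); you never address $P_{4}$ or $C_{4}$, and without them ``no induced $2K_{2}$'' alone does not yield a dominating vertex (consider $C_{5}$, which is $2K_{2}$-free as an induced subgraph but has no dominating vertex).
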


In particular, this means that each $G^{i}$ is a threshold graph (for convenience, we allow these threshold graphs to contain isolated vertices). See Figure~\ref{Fig:Threshold}. For each $G^{i}$, there is a sequence of vertices $\{v_{1}, v_{2}, \dots, v_{t}\}$ such that the removal of $\{v_{1}, v_{2}, \dots, v_{r}\}$ for some $r \leq t$ along with any new isolated vertices, leaves behind another threshold graph. Define the \emph{spine} of $G^{i}$ to be the sequence of vertices $\{v_{1}, v_{2}, \dots, v_{t}\}$ and define the \emph{head} of the spine to be $v_{1}$. Note that the sequence may not be unique, so when there is more than one choice for a head, select one arbitrarily from the available choices. After the removal of $v_{t}$, there remain no edges of color $i$ but there must be at least one remaining vertex, called a \emph{tail} or $i$-tail, that was adjacent to $v_{t}$ (and therefore all other vertices of the spine) in color $i$. Again the choice of the tail may not be unique but we often arbitrarily select one from the available choices. In particular, each tail is adjacent to every vertex of the spine but no other vertices in $G^{i}$. Finally, for an edge $e = uv$, let $c(e)$ or $c(uv)$ denote the color of $e$.

\begin{figure}[bht]
	\centering
	\begin{tikzpicture}[x=0.8cm, y=0.8cm]
	\vertex (v1) at (0,4) []{}; \vertex (v2) at (0,3) []{};
	\vertex (v3) at (0,2) []{}; \vertex (v4) at (0,0) []{};
	\vertex (w1) at (3,5) []{}; \vertex (w2) at (3,4) []{};
	\vertex (w3) at (3,3) []{}; \vertex (w4) at (3,2) []{};
	\vertex (w5) at (3,0) []{}; \vertex (w6) at (1.5,-1) []{};
	
	\vertex (x1) at (8,4) []{}; \vertex (x2) at (8,3) []{};
	\vertex (x3) at (8,2) []{}; \vertex (x4) at (8,0) []{};
	\vertex (y1) at (11,5) []{}; \vertex (y2) at (11,4) []{};
	\vertex (y3) at (11,3) []{}; \vertex (y4) at (11,2) []{};
	\vertex (y5) at (11,0) []{}; \vertex (y6) at (9.5,-1) []{};
	
	\path (v1) edge (w2) edge (w3) edge (w4) edge (w5) edge (w6);
	\path (v2) edge (w3) edge (w4) edge (w5) edge (w6);
	\path (v3) edge (w3) edge (w4) edge (w5) edge (w6);
	\path (v4) edge (w5) edge (w6);
	
	\path (y1) edge (x1) edge (x2) edge (x3) edge (x4) edge (y6);
	\path (y2) edge (x2) edge (x3) edge (x4) edge (y6);
	\path (y3) edge (x3) edge (x4) edge (y6);
	\path (y4) edge (x3) edge (x4) edge (y6);
	\path (y5) edge (y6);
	
	\path[dotted] (-1,4.5) edge (5,4.5);
	\path[dotted] (-1,3.5) edge (5,3.5);
	\path[dotted] (-1,1.5) edge (5,1.5);
	\path[dotted] (-1,-1.5) edge (5,-1.5);
	
	\node (q1) at (0,1) [label=center:{$\vdots$}]{};
	\node (q2) at (3,1) [label=center:{$\vdots$}]{};
	\node (q3) at (8,1) [label=center:{$\vdots$}]{};
	\node (q4) at (11,1) [label=center:{$\vdots$}]{};
	
	\draw (0,2) ellipse (0.5 and 2.5) node {};
	\draw (11,2.5) ellipse (0.5 and 3) node {};
	\draw[decoration={brace,mirror,raise=5pt},decorate]
	(w6) -- (w5);
	
	\node (p4) at (-1,3.5) [label=above:{head}]{};
	\node (p5) at (3,-1.3) [label=above:{tails}]{};
	\node (p6) at (0,-1.5) [label=below:{spine}]{};
	\node (p7) at (3,-1.5) [label=below:{ribs}]{};
	
	\end{tikzpicture}
	\caption{The structure of a threshold graph (allow isolate vertices) and its complement.}
	\label{Fig:Threshold}
\end{figure}
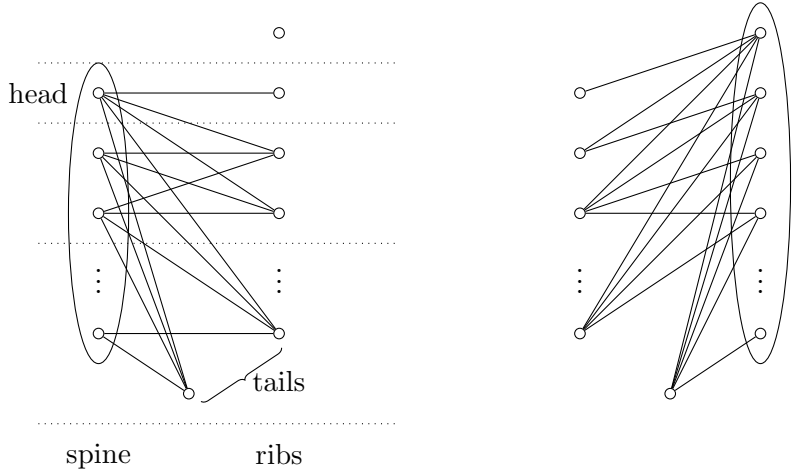

Also note that threshold graphs have been characterized in the following result.

\begin{lemma}[Chv\'atal, Hammer \cite{CH1973}]\label{Lemma:Forbid}
A graph is a threshold graph if and only if it contains no $2K_2$, $P_{4}$ or $C_{4}$ as an induced subgraph.
\end{lemma}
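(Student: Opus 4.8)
The plan is to prove both directions from the constructive definition of a threshold graph, namely that $G$ is a threshold graph if and only if it can be reduced to the empty graph by repeatedly deleting a vertex that is either isolated or dominating (adjacent to all remaining vertices). The forward implication is the easy one. First I would observe that the class of threshold graphs is hereditary: restricting a valid deletion order to any vertex subset $U$ still deletes, at each step, a vertex that is isolated or dominating within the induced subgraph on the surviving vertices of $U$, so every induced subgraph of a threshold graph is again a threshold graph. Since each of $2K_2$, $P_4$, and $C_4$ has minimum degree at least $1$ and maximum degree at most $n-2$, none of them possesses an isolated or dominating vertex, so none is a threshold graph; being hereditary, a threshold graph therefore contains none of them as an induced subgraph.

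The substance is the reverse implication, which I would prove by induction on $|V(G)|$ after isolating the following key lemma: every nonempty $\{2K_2, P_4, C_4\}$-free graph has an isolated or a dominating vertex. Granting this, I delete such a vertex $u$; the graph $G-u$ is still $\{2K_2,P_4,C_4\}$-free (the family is hereditary), hence a threshold graph by induction, and re-attaching $u$ as an isolated or dominating vertex exhibits $G$ as a threshold graph. The base case of a single vertex is immediate.

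For the key lemma I would first establish a nesting property: for any two non-adjacent vertices $x,y$, the neighborhoods are comparable, i.e.\ $N(x)\subseteq N(y)$ or $N(y)\subseteq N(x)$. Indeed, if $a\in N(x)\setminus N(y)$ and $b\in N(y)\setminus N(x)$ witnessed incomparability, then $\{x,y,a,b\}$ would induce a $P_4$ (when $a\sim b$) or a $2K_2$ (when $a\not\sim b$), both forbidden. Now let $v$ be a vertex of maximum degree; if $G$ has an isolated vertex we are done, so assume minimum degree at least $1$ and suppose, for contradiction, that $v$ is not dominating. Pick any non-neighbor $w\ne v$ of $v$. By the nesting property and the maximality of $\deg(v)$ we get $N(w)\subseteq N(v)$, and since $\deg(w)\ge 1$ we may fix a neighbor $a\in N(w)\subseteq N(v)$. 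The final step shows $a$ is adjacent to every other vertex of $N(v)$: if some $b\in N(v)$ were non-adjacent to $a$, then $\{w,a,v,b\}$ would induce a $P_4$ (if $w\not\sim b$) or a $C_4$ (if $w\sim b$). Hence $a$ is adjacent to $v$, to all of $N(v)\setminus\{a\}$, and to $w\notin N(v)\cup\{v\}$, giving $\deg(a)\ge \deg(v)+1$ and contradicting the maximality of $v$. Therefore $v$ is dominating.

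I expect the key lemma to be the main obstacle, and within it the delicate point is the final degree count: one must verify that the neighbor $a$ of $w$ strictly beats $v$ in degree, which is exactly what forces all three forbidden configurations to appear ($2K_2$ and $P_4$ in the nesting step, $P_4$ and $C_4$ in the domination step), so each hypothesis is genuinely used. As an elegant check on the whole argument, one may note that the forbidden family is self-complementary --- the complement of $2K_2$ is $C_4$, the complement of $C_4$ is $2K_2$, and $P_4$ is self-complementary --- so that $\{2K_2,P_4,C_4\}$-freeness is preserved under complementation; this matches the fact that ``has an isolated or dominating vertex'' is itself a self-complementary condition, and could be used to shorten the case analysis.
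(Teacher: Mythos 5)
The paper does not prove this lemma at all: it is quoted as a known result of Chv\'atal and Hammer with only a citation to \cite{CH1973}, so there is no in-paper argument to compare against. Your blind proof is correct and self-contained. The forward direction (heredity of the elimination property plus the observation that each of $2K_2$, $P_4$, $C_4$ has minimum degree at least $1$ and maximum degree at most $n-2$) is sound, and the key lemma is handled properly: the neighborhood-nesting step for non-adjacent pairs uses exactly the forbidden $P_4$/$2K_2$, the step showing that the maximum-degree vertex $v$ must dominate uses the forbidden $P_4$/$C_4$, and the final degree count $\deg(a)\geq \deg(v)+1$ is justified since $a$ is adjacent to all of $N(v)\setminus\{a\}$, to $v$, and to $w\notin N(v)\cup\{v\}$. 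One small point worth making explicit if you write this up: when you deduce $N(w)\subseteq N(v)$ from nesting and maximality, the case $N(v)\subseteq N(w)$ forces $N(v)=N(w)$ by maximality of $\deg(v)$, so the desired containment holds in either branch. This is essentially the standard textbook proof of the Chv\'atal--Hammer characterization, and your closing remark about the self-complementarity of the forbidden family is a correct and pleasant consistency check, matching the paper's own observation that the complement of a threshold graph is a threshold graph.
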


Let $G$ be an edge-colored graph. For each $v\in V(G)$, the {\it colored saturated degree} of $v$ is $d^s(v):=e(G)-e(G-v)$ (introduced by Li et al.~\cite{LNXZ2014}). The edge induced subgraph of $G$ by edges with color $i$ is denoted by $G^i$. We say that color $i$ contributes $0$ or $1$ or $2$ to $\sum_v d^s(v)$ if $G^i$ contains a $2K_2$ or a $C_3$, or $G^i$ is a star $K_{1,t}$ for $t\geq 2$, or $G^i$ is a single edge, respectively.

\begin{lemma}[Li, Ning, Xu, Zhang \cite{LNXZ2014}]\label{lem_CSdegree}
	Let $G$ be an edge-colored graph, then $\sum_v d^s(v)\leq 2c(G)$. The equality holds if and only if $G$ is rainbow.
\end{lemma}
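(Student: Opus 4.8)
The plan is to bound $\sum_v d^s(v)$ by splitting it over the color classes and controlling the contribution of each color separately. Writing $w_i$ for the contribution of color $i$, the first step is to identify $w_i$ concretely: color $i$ disappears from $G$ upon deleting a vertex $v$ precisely when $v$ lies on every edge of $G^i$, so $w_i$ equals the number of vertices of $G^i$ incident to all of its edges, and a routine double count rearranges the per-vertex sum into the per-color sum $\sum_v d^s(v)=\sum_{i=1}^{c(G)} w_i$.

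The second step is to read off $w_i$ from the shape of $G^i$ (recall $G^i$ carries no isolated vertices). The key elementary fact is that a graph without isolated vertices has a vertex meeting all of its edges if and only if it is a star $K_{1,t}$. Thus if $G^i$ is a single edge then both endpoints qualify and $w_i=2$; if $G^i=K_{1,t}$ with $t\ge 2$ then only the center qualifies and $w_i=1$; and otherwise $G^i$ is either disconnected with two nontrivial components or connected but not a star, in which case $G^i$ contains a $2K_2$ or a $C_3$ and no vertex meets all edges, so $w_i=0$. These are exactly the contribution values declared before the statement, and since $w_i\le 2$ for every color we obtain $\sum_v d^s(v)=\sum_i w_i\le 2c(G)$.

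For the equality characterization, $\sum_v d^s(v)=2c(G)$ forces $w_i=2$ for each of the $c(G)$ colors, hence every $G^i$ is a single edge; summing $e(G)=\sum_i e(G^i)$ then gives $e(G)=c(G)$, which says precisely that $G$ is rainbow. The converse is immediate, since in a rainbow graph each color class is a single edge and every $w_i=2$.

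The only step that is not bookkeeping is the classification of $w_i$, i.e.\ verifying that ``$G^i$ has no vertex meeting all its edges'' is equivalent to ``$G^i$ contains $2K_2$ or $C_3$.'' The direction needing care is the connected non-star case: a non-star tree has diameter at least $3$ and so contains a $P_4$, whose two end edges form a $2K_2$, while a connected graph carrying a cycle contains either a triangle or a cycle of length at least $4$, the latter again containing a $2K_2$. This makes the trichotomy $w_i\in\{0,1,2\}$ exhaustive and is where I would be most careful.
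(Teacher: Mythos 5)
Your proof is correct and takes essentially the approach the paper intends: the lemma is quoted from \cite{LNXZ2014} without proof here, but the per-color contribution framework stated immediately before it ($w_i\in\{0,1,2\}$ according to whether $G^i$ contains a $2K_2$ or $C_3$, is a star $K_{1,t}$ with $t\geq 2$, or is a single edge) is exactly your decomposition $\sum_v d^s(v)=\sum_i w_i$. Note that you have (correctly) interpreted $d^s(v)$ as $c(G)-c(G-v)$, which is what every later use in the paper requires, even though the displayed definition misprints it as $e(G)-e(G-v)$.
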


The following version of this lemma can be found in F\"uredi and Simonovits \cite{FS2013} (Theorem 2.22).
\begin{lemma}[K\H{o}vari, S\'os, Tur\'an \cite{KST1954}]\label{lem_Kovari1954}
	 Let $K_{a,b}$ denote the complete bipartite graph with $a$ and $b$ vertices in its two parts. Then
	\[
		ex(n,K_{a,b})\leq \frac{1}{2}\sqrt[a]{b-1}\cdot n^{2-1/a}+\frac{a-1}{2}n.
	\]
\end{lemma}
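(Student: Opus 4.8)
The statement is the K\H{o}vari--S\'os--Tur\'an bound on the Zarankiewicz problem, and it suffices to show that every $K_{a,b}$-free graph $G$ on $n$ vertices satisfies $e(G)\le \frac12\sqrt[a]{b-1}\,n^{2-1/a}+\frac{a-1}{2}n$. The plan is to run the classical double-counting argument, counting the ordered pairs $(v,S)$ in which $S$ is an $a$-element set of vertices with $S\subseteq N(v)$. Writing $d(v)$ for the degree of $v$ and $\bar d=\frac1n\sum_v d(v)=2e(G)/n$ for the average degree, grouping these pairs by the apex $v$ gives the count $\sum_v\binom{d(v)}{a}$, while grouping them by the chosen set $S$ gives $\sum_{|S|=a}N(S)$, where $N(S)$ denotes the number of common neighbours of $S$.

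The hypothesis that $G$ contains no $K_{a,b}$ enters through the second count: if some $a$-set $S$ had $b$ or more common neighbours, then $S$ together with $b$ of them would span a copy of $K_{a,b}$. Hence $N(S)\le b-1$ for every $S$, and so
\[
  \sum_v\binom{d(v)}{a}=\sum_{|S|=a}N(S)\le (b-1)\binom{n}{a}.
\]

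To convert this into a bound on $e(G)$ I would use convexity. Since $m\mapsto\binom{m}{a}$ is convex on the nonnegative integers (its second difference equals $\binom{m-1}{a-2}\ge 0$), Jensen's inequality applied to the degree sequence yields $\sum_v\binom{d(v)}{a}\ge n\binom{\bar d}{a}$. Combining this with the elementary estimates $\binom{\bar d}{a}\ge(\bar d-a+1)^a/a!$ (valid once $\bar d\ge a-1$) and $\binom{n}{a}\le n^a/a!$ reduces the displayed inequality to
\[
  (\bar d-a+1)^a\le (b-1)\,n^{a-1}.
\]
Taking $a$-th roots gives $\bar d\le\sqrt[a]{b-1}\,n^{1-1/a}+(a-1)$, and substituting $\bar d=2e(G)/n$ and multiplying by $n/2$ produces exactly the claimed bound. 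The only regime left out is $\bar d<a-1$; but there $e(G)<(a-1)n/2$, so the inequality holds trivially since its leading term is nonnegative.

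The combinatorial core --- the double count together with the $N(S)\le b-1$ observation --- is immediate; the step that needs care is the convexity estimate rather than the graph theory. One must make sure Jensen is applied to the integer degree sequence via the discrete convexity of $\binom{\cdot}{a}$, and that the lower bound $\binom{\bar d}{a}\ge(\bar d-a+1)^a/a!$ is only invoked in the range $\bar d\ge a-1$, with the complementary range dispatched separately as above. No appeal to the other lemmas of this section is required.
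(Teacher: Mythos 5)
Your argument is correct: it is the classical K\H{o}vari--S\'os--Tur\'an double count of pairs $(v,S)$ with $|S|=a$ and $S\subseteq N(v)$, the bound $N(S)\le b-1$ from $K_{a,b}$-freeness, and the convexity step, and the arithmetic does yield exactly $\tfrac12\sqrt[a]{b-1}\,n^{2-1/a}+\tfrac{a-1}{2}n$, with the low-degree regime $\bar d<a-1$ correctly dispatched by the $\tfrac{a-1}{2}n$ term. The paper itself gives no proof of this lemma --- it is quoted from F\"uredi and Simonovits --- so there is nothing to compare against; the only point deserving a word more care is that Jensen must be applied to the convex real extension of $m\mapsto\binom{m}{a}$ (set equal to $0$ below $a-1$), since $\bar d$ need not be an integer.
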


Since $K_{2,2}\cong C_4$, we have $ex(n,C_4)\leq \frac{1}{2}n^{3/2}+\frac{1}{2}n.$

\begin{lemma}[Erd\H{o}s, Gallai\cite{EG1959}]\label{lem_matching}
	\[
	ex(n,kK_2)=\max \left\{
	\binom{2k-1}{2},\binom{k-1}{2}+(k-1)(n-k+1)
	\right\}.
	\]
	The extremal graph is $K_{2k-1}\cup \bar{K}_{n-2k+1}$, or $K_{k-1}+ \bar{K}_{n-k+1}$ (depending on the value of $ex(n,kK_2))$.
\end{lemma}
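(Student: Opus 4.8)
The plan is to establish the upper bound $e(G)\le \max\{\binom{2k-1}{2},\ \binom{k-1}{2}+(k-1)(n-k+1)\}$ for every $n$-vertex graph $G$ with $\nu(G)\le k-1$ (i.e.\ no matching of $k$ edges); that the two displayed graphs attain these values and are $kK_2$-free is a direct check, since $K_{2k-1}$ has matching number $\lfloor (2k-1)/2\rfloor=k-1$ and in $K_{k-1}+\bar K_{n-k+1}$ every edge meets the $(k-1)$-clique. I would prove the bound by induction on $k$. The base case $k=1$ (no edges) is immediate, and whenever $n\le 2k-1$ one uses $e(G)\le\binom{n}{2}\le\binom{2k-1}{2}$; so the real content is the range $n\ge 2k$.

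For $n\ge 2k$ I would split on the maximum degree. If $\Delta(G)\le k-1$, then averaging gives $e(G)\le n(k-1)/2$, which (using $n\ge k$) is at most $\binom{k-1}{2}+(k-1)(n-k+1)$. Otherwise some vertex has degree at least $k$. The favourable case is that some \emph{essential} vertex $v$ has $\deg(v)\ge k$, meaning $\nu(G-v)=\nu(G)-1\le k-2$; then I delete $v$ and apply the inductive hypothesis to $G-v$ with parameter $k-1$ on $n-1$ vertices. Writing $e(G)=e(G-v)+\deg(v)\le e(G-v)+(n-1)$ and substituting the two inductive bounds, a short computation gives $\binom{k-2}{2}+(k-2)(n-k+1)+(n-1)=\binom{k-1}{2}+(k-1)(n-k+1)$ exactly, together with $\binom{2k-3}{2}+(n-1)\le\binom{2k-1}{2}$ throughout the subrange where the clique term dominates; so this case closes with no slack, and tracing equality recovers the join graph.

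The main obstacle is the remaining case, in which \emph{every} vertex of degree at least $k$ is inessential (missed by some maximum matching). This case is genuinely unavoidable: in $K_{2k-1}\cup\bar K_{n-2k+1}$ every clique vertex has degree $2k-2\ge k$ yet lies in no deletion that lowers the matching number, so this is precisely where the term $\binom{2k-1}{2}$ must be produced, and the single-vertex induction can no longer reduce $k$. Here I would invoke the structure of maximum matchings through the Gallai--Edmonds decomposition $V=A\cup C\cup D$, where $C$ carries a perfect matching, $D$ is the set of inessential vertices, and the components $D_1,\dots,D_t$ of $G[D]$ are factor-critical (in particular odd). The only edges present lie within $A$, within $C$, within some $D_i$, between $A$ and $C$, or between $A$ and $D$, so
\[
e(G)\le \binom{a}{2}+\binom{c}{2}+ac+a\sum_{i}d_i+\sum_{i}\binom{d_i}{2},
\]
where $a=|A|$, $c=|C|$, $d_i=|D_i|$, subject to the matching identity $a+\tfrac{c}{2}+\sum_i\tfrac{d_i-1}{2}=k-1$ and the deficiency identity $t-a=n-2k+2$.

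The hard part will be the final optimization of this expression over admissible tuples $(a,c,d_1,\dots,d_t)$. By convexity of $\binom{x}{2}$ the edge mass should be collapsed onto a single $D$-component while $C$ is emptied, after which exactly two boundary configurations survive: one factor-critical clique on $2k-1$ vertices with $A=\emptyset$ (the rest singletons), giving $\binom{2k-1}{2}$; and all $D$-components singletons with $a=k-1$, giving $\binom{k-1}{2}+(k-1)(n-k+1)$. Verifying that no intermediate configuration exceeds the larger of these two endpoints, and that equality forces one of them, is the crux of the argument and simultaneously pins down the two extremal graphs in the statement. This Gallai--Edmonds computation in fact proves the bound on its own; the induction above merely disposes of the easy regimes transparently.
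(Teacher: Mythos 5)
Your proof is correct in outline, but there is nothing in the paper to compare it against: the paper states this lemma purely as a black-box citation of Erd\H{o}s and Gallai \cite{EG1959}, supplying no proof and using only its consequences ($ex(n,2K_2)=n-1$ for $n\geq 5$, and $K_{1,n-1}$ as the unique $2K_2$-free extremal graph, in Cases 2--4 of Section 3). Your route is also genuinely different from the classical one: Erd\H{o}s and Gallai argued by elementary induction, whereas you deduce the bound from the Gallai--Edmonds decomposition, which postdates their paper; that machinery costs more background but makes both the bound and the extremal characterization transparent. The steps you leave implicit do close. In the essential-vertex step the cancellation is exact, $\binom{k-2}{2}+(k-2)(n-k+1)+(n-1)=\binom{k-1}{2}+(k-1)(n-k+1)$, and the clique branch is safe because $\binom{2k-1}{2}$ dominates the maximum only when $n\leq \frac{5k}{2}-1$, in which range $n-1\leq 4k-5=\binom{2k-1}{2}-\binom{2k-3}{2}$, while for larger $n$ the linear bound grows at slope $k-1\geq 1$. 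For the crux you flag: absorbing $C$ into one odd component and replacing two odd components of sizes $x,y$ by components of sizes $x+y-1$ and $1$ preserves the identity $a+\frac{c}{2}+\sum_i\frac{d_i-1}{2}=k-1$ and gains exactly $(x-1)(y-1)\geq 0$ edges, so one may take $c=0$ and a single nontrivial component of size $d=2k-2a-1$; the objective then becomes $F(a)=\binom{a}{2}+a(n-a)+\binom{2k-2a-1}{2}=\frac{3}{2}a^2+\bigl(n-4k+\frac{5}{2}\bigr)a+(2k-1)(k-1)$, strictly convex in $a$, hence maximized only at the endpoints $a=0$ and $a=k-1$, which are precisely your two configurations; the zero-slack conditions (merging gain zero, all counted cliques complete, all $A$-to-rest edges present) recover $K_{2k-1}\cup\bar{K}_{n-2k+1}$ and $K_{k-1}+\bar{K}_{n-k+1}$ as claimed. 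Two cosmetic points: your matching identity presumes $\nu(G)=k-1$ exactly, so either assume $G$ edge-maximal or note that the resulting bound is monotone in $\nu$; and, as you observe yourself, the Gallai--Edmonds computation proves everything, so the degree-based induction in your first half is redundant and could be deleted.
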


\section{Proof of Theorem~\ref{Thm:nP1C4}}

\begin{proof}
It is trivial when $n=4$. Thus we may assume that $n\geq 5$ and $c(G)=n+1$. Suppose the claim does not hold in general and let $G\cong K_n$ be a counterexample with smallest number of vertices.
	
If there exists a vertex $u\in V(G)$ such that $d^s(u)\leq 1$, then
$$
c(G-u) = c(G)-d^s(u) \geq n=|V(G-u)|+1.
$$
Since $G$ is a minimum counter example, there is a properly colored $C_4$ in $G-u$, and therefore also a properly colored $C_4$ in $G$, a contradiction. We may therefore assume that $d^s(v)\geq2$ for every $v\in V(G)$.
	
It follows from Lemma \ref{lem_CSdegree} that
$$
2n\leq \sum_v d^s(v)\leq 2c(G)=2n+2.
$$

\setcounter{case}{0}
\begin{case}
$\sum_v d^s(v)=2n+2$.
\end{case}

By Lemma~\ref{lem_CSdegree}, $G$ is rainbow. Then every $C_4$ in $G$ is a properly colored $C_4$, a contradiction.

\begin{case}
$\sum_v d^s(v)=2n+1$, i.e., exactly $1$ color contributes $1$ and the other $n$ colors contribute $2$ to $\sum_v d^s(v)$.
\end{case}

Let $H$ be the subgraph induced on the $n$ edges with colors which contribute $2$ to $\sum_v d^s(v)$. It follows from the fact that $n\geq 5$ and $ex(n,2K_2)=n-1$ (Lemma \ref{lem_matching}) that $H$ contains a (rainbow) $2K_2$. Each $C_4$ containing the edges of this $2K_2$ is a properly colored $C_4$ in $G$, a contradiction.

\begin{case}
$\sum_v d^s(v)=2n$, and exactly $1$ color contributes $0$ while the other $n$ colors contribute $2$ to $\sum_v d^s(v)$.
\end{case}

The same argument as in the previous case provides the desired result for this case.
	
\begin{case}
$\sum_v d^s(v)=2n$, and exactly $2$ colors contribute $1$ and the other $n-1$ colors contribute $2$ to $\sum_v d^s(v)$.
\end{case}

Let $H$ be the subgraph induced by the $n-1$ edges with colors which each contribute $2$ to $\sum_v d^s(v)$. Since $n\geq 5$, it follows from Lemma~\ref{lem_matching} that the only case when $H$ contains no $2K_2$ is $H\cong K_{1,n-1}$. Since $G$ is complete, the remaining edges with other colors induce a complete graph with order $n-1\geq4$. But there remain only $2$ colors and each color class of these two colors induces a star, a contradiction, completing the proof of the theorem.
\end{proof}

\section{Proof of Theorem~\ref{Thm:Classify}}

Recall the statement of Theorem~\ref{Thm:Classify}. If $G$ is an edge-coloring of the complete graph $K_{n}$ using $n$ colors which contains no properly colored $C_{4}$, then $G$ contains either
\bd
\item{(1)} a vertex with all incident edges in a single color,
\item{(2)} a rainbow triangle $uvw$, say with $uv$ in red, $vw$ in blue, and $uw$ in green, with all other incident edges of $u$ being red, all other incident edges of $v$ being blue, and all other incident edges of $w$ being green, or
\item{(3)} a vertex with all incident edges in distinct colors while all other edges of $G$ have a single (distinct) color.
\ed

\begin{proof}
Let $G$ be an edge-colored complete graph $K_{n}$ using $n$ colors and suppose that $G$ contains no properly colored $C_{4}$. For each vertex $v \in V(G)$, there is a color $c$ such that the graph induced on $V(G) \setminus \{v\}$ contains no edges of color $c$ since otherwise $V(G) \setminus \{v\}$ is an edge-coloring of $K_{n - 1}$ using $n$ colors, which contains a properly colored $C_{4}$ by Theorem~\ref{Thm:nP1C4}. This means $d^s(v)\geq 1$.

We may assume that $G$ contains no vertex with all incident edges in a single color since $G$ would then have Structure~(1).

\begin{claim}\label{Claim:NonHead}
There exists a vertex that is not a head of a spine.
\end{claim}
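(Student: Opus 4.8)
The plan is to argue by contradiction: suppose no such vertex exists, so that every vertex of $G$ is the head of some spine. Since $G$ uses exactly $n$ colors, each color class $G^{i}$ contributes exactly one head, giving $n$ heads in total (with multiplicity), while $G$ has exactly $n$ vertices. Hence if every vertex is a head, the assignment ``color $\mapsto$ its head'' must be a bijection: each vertex $v$ is the head of exactly one color class, and in particular is a dominating vertex of that class by Lemma~\ref{Lemma:Dominating}. The goal is to contradict this bijection. I would phrase it cleanly as a matching problem: form the bipartite incidence graph between the $n$ colors and the $n$ vertices, joining color $i$ to $v$ whenever $v$ is a dominating vertex of $G^{i}$; ``every vertex is a head'' is exactly the existence of a \emph{perfect} matching here, so it suffices to exhibit a Hall-type deficiency.

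First I would extract structure from the facts already in hand. We have just shown $d^{s}(v)\geq 1$ for every $v$, so each vertex is essential for some color, i.e.\ some color class is a star (possibly a single edge) centered at $v$. If that star is $K_{1,t}$ with $t\geq 2$, then $v$ is its \emph{unique} dominating vertex and is therefore forced to be its head; and since Structure~(1) is excluded, no vertex is monochromatic, so no single color absorbs all of $v$'s edges. Next I would pin down the single-edge color classes: two vertex-disjoint single-edge classes would form a rainbow $2K_{2}$ whose two connecting edges cannot repeat either of those two colors (each appears on just one edge), and completing it yields a properly colored $C_{4}$ exactly as in Case~2 of the proof of Theorem~\ref{Thm:nP1C4}. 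Thus all single-edge classes pairwise intersect, so they form either a star through one vertex $x$ or a triangle; this sharply limits how their heads can be distributed.

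I would then combine these constraints with the counting bound. Writing $a,b,c$ for the numbers of colors whose class is a single edge, a star $K_{1,t}$ with $t\geq 2$, and a ``complex'' class (one containing $2K_{2}$ or $C_{3}$, equivalently not a threshold star, cf.\ Lemma~\ref{Lemma:Forbid}), Lemma~\ref{lem_CSdegree} gives $2a+b=\sum_{v}d^{s}(v)\leq 2c(G)=2n$, while $d^{s}(v)\geq 1$ for all $v$ gives $2a+b\geq n$, so with $a+b+c=n$ one gets $a\geq c$. The $b$ star classes consume $b$ distinct forced heads (their centers, which must be distinct under the bijection, since a vertex essential for two stars would head two colors). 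The plan is to show that the remaining single-edge and complex classes cannot supply distinct heads for the remaining vertices: the single-edge classes can only place their heads among the few vertices of the star/triangle they form, producing a set of colors whose available dominating vertices is too small, which is the desired Hall violator.

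The main obstacle I expect is the complex color classes. Unlike stars and single edges, a threshold graph containing a triangle has several dominating vertices, so these classes are the flexible part of the matching and must be controlled to close the deficiency argument; I would handle them by using Lemma~\ref{Lemma:Dominating} together with the no-$C_{4}$ hypothesis to bound the overlap of their dominating sets with the star centers and with the single-edge star/triangle, and, failing a direct count, by locating within a complex class a rainbow $2K_{2}$ that extends to a properly colored $C_{4}$. In short, the routine part is the bookkeeping with $a,b,c$ and the forced star-centers, while the genuine difficulty is ruling out that the complex classes rescue a perfect matching of heads onto all $n$ vertices.
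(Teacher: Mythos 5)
Your preparatory observations are correct: the color--head correspondence, the fact that $d^{s}(v)\geq 1$ forces every vertex to be a star center or a single-edge endpoint, the fact that two vertex-disjoint single-edge classes complete to a properly colored $C_{4}$ (so the single-edge classes form a star or a triangle), and the count $2a+b=\sum_{v}d^{s}(v)$ with $n\leq 2a+b\leq 2n$ and $a\geq c$. The fatal problem is the endgame. The Hall-type deficiency you are hoping for does not exist, because the configuration you are trying to refute is genuinely realizable by $C_{4}$-free colorings. Take $n=6$: a rainbow triangle $uvw$ in colors $1,2,3$, a rainbow triangle $xyz$ in colors $4,5,6$, and every edge from $u$ (resp.\ $v$, $w$) to $\{x,y,z\}$ colored $1$ (resp.\ $2$, $3$). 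This uses exactly $6$ colors, contains no properly colored $C_{4}$, and has $b=3$ (stars centered at $u,v,w$), $a=3$ (the triangle of single-edge classes on $\{x,y,z\}$), $c=0$; the assignment $1\mapsto u$, $2\mapsto v$, $3\mapsto w$, $4\mapsto x$, $5\mapsto y$, $6\mapsto z$ is a perfect matching of colors onto heads. So your bookkeeping is fully consistent with ``every vertex is a head,'' and no contradiction can be extracted from it: in the star configuration the $a$ single-edge colors have $a+1$ available head vertices, in the triangle configuration $3$ colors have $3$ available vertices, and the complex classes only make the matching easier. Even the part you flag as the ``genuine difficulty'' (controlling the complex classes) is beside the point --- the example above has $c=0$ and still admits a perfect matching.

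The reason the paper can nevertheless dispose of this case is that its argument is not a pure contradiction. It first uses the freedom to reselect heads (if any color has two admissible heads, switching one immediately leaves some vertex unheaded), and in the remaining rigid situation it shows that $G$ must already have Structure~(1) or Structure~(2), which finishes the theorem outright; the lonely-edge digraph, the reduction of rainbow cycles to triangles, the subclaim on rainbow triangle $2$-factors, and the tournament argument all serve that purpose. Your plan only allows itself to exclude Structure~(1), has no mechanism for terminating in Structure~(2), and aims at the strictly stronger (and false) statement that no surjective head assignment exists. To repair it you would have to replace ``exhibit a Hall violator'' with ``show that a perfect matching of heads forces Structure~(1) or (2),'' which is essentially the structural analysis the paper carries out and cannot be short-circuited by the $a,b,c$ counting.
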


\begin{proof}
Suppose not, so each vertex is the head of some spine. Note that since there may be more than one choice for the head of a spine, if there is an alternative selection for a head, then we could switch to a different choice and complete the proof of this claim. Thus, each head is unique so there are $n$ colors and $n$ heads. This means that every vertex $u$, say the head of blue, must have at least one neighbor by a blue edge, say to $v$, such that $v$ has no other incident blue edges. Call the edge $uv$ a \emph{lonely blue edge} and call the vertex $v$ a \emph{lonely blue vertex}. Since there may exist only one lonely edge for each head, we select a single lonely edge for each head.

Create an auxiliary graph $H$ on the same vertex set with only the set of all lonely edges. Direct each lonely edge toward its corresponding lonely vertex. This forms a directed graph with out degree equal to $1$ at each vertex. There must therefore exist a directed cycle, which corresponds to a rainbow cycle in $G$. Since $G$ is complete, if this rainbow cycle is even, we may construct a rainbow $C_{4}$ by simply considering chords of a larger rainbow cycle, so this rainbow cycle must be odd.

Suppose first that the rainbow cycle in $G$ corresponding to the directed cycle in $H$ is a $C_{2k + 1}$ where $k \geq 2$. Label the vertices of this cycle as $\{v_{1}, v_{2}, \dots, v_{2k + 1}\}$ with edges of the form $v_{i}v_{i + 1}$ where indices are taken modulo $2k + 1$ so $v_{2k + 2} = v_{1}$. Recall that all edges of this cycle are lonely.

We will now show that there is a rainbow $C_{5}$ in $G$ where all but at most one of the edges are lonely. If $k = 2$, this goal is complete so suppose $k \geq 3$. The chord $v_{1}v_{4}$ cannot make a properly colored $C_{4}$ on $v_{1}v_{2}v_{3}v_{4}v_{1}$ so $v_{1}v_{4}$ must have the same color as either $v_{1}v_{2}$ or $v_{3}v_{4}$. This means that $v_{1}v_{4}v_{5}, \dots, v_{2k + 1}v_{1}$ is a smaller rainbow cycle than the original rainbow cycle but the edge $v_{1}v_{4}$ is not lonely. This process can be repeated using chords $v_{1}v_{2t}$ for $3 \leq t \leq k - 1$ to obtain a rainbow $C_{5}$ with all lonely edges except the edge $v_{1}v_{2k - 2}$.

Relabel the vertices of this rainbow $C_{5}$ as $u_{1}u_{2}u_{3}u_{4}u_{5}u_{1}$ where the lonely edges are $u_{i}u_{i + 1}$ in color $i$ for $1 \leq i \leq 4$ and the potentially non-lonely edge is $u_{5}u_{1}$ in color $5$. Without loss of generality, suppose that $u_{i + 1}$ is the lonely vertex corresponding to the lonely edge $u_{i}u_{i + 1}$. This means that $u_{i + 1}$ has no other incident edges in color $i$ so, in order to avoid a properly colored $C_{4}$, the edge $u_{i + 1}u_{i + 3}$ must have color $i + 3$ for $1 \leq i \leq 4$ where indices are taken modulo $5$. Then the $4$-cycle $u_{1}u_{5}u_{2}u_{4}u_{1}$ is a rainbow (and therefore properly colored) $C_{4}$. This means that there can be no rainbow $C_{5}$ with almost all lonely edges, meaning that there can be no lonely cycle of length at least $4$. Therefore, all lonely cycles must be triangles.

\begin{subclaim}\label{SubClaim:No2-Factor}
The set of lonely edges does not induce a rainbow triangle $2$-factor.
\end{subclaim}

\begin{proof}
Suppose not, so suppose the set of lonely edges induces a rainbow triangle $2$-factor. Let $T = uvw$ be one rainbow triangle of lonely edges and let $T' = xyz$ be another rainbow triangle of lonely edges. Let the edges $uv$, $vw$, $uw$, $xy$, $yz$, and $xz$ have colors $1, 2, 3, 4, 5, 6$ respectively. Also suppose each vertex $v, w, u, y, z, x$ is lonely in color $1, 2, 3, 4, 5, 6$, respectively.

Since $c(uv)=1,c(xy)=4$, $v$ is lonely in $1$ and $y$ is lonely in $4$, it follows from the assumption that $uvyxu$ cannot be properly colored that $c(ux)$ must be $1$ or $4$.

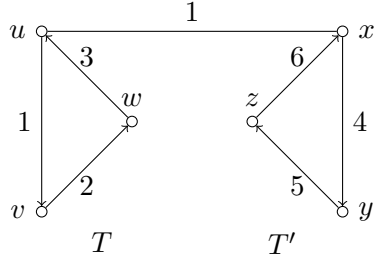
\begin{figure}[bht]
	\centering
	\begin{tikzpicture}[x=0.8cm, y=0.8cm]
	\vertex (u) at (0,3) [label=left:{$u$}]{};
	\vertex (v) at (0,0) [label=left:{$v$}]{};
	\vertex (w) at (1.5,1.5) [label=above:{$w$}] {};
	\vertex (x) at (5,3) [label=right:{$x$}] {};
	\vertex (y) at (5,0) [label=right:{$y$}] {};
	\vertex (z) at (3.5,1.5) [label=above:{$z$}] {};
	
	\path[->]
		(u) edge node[left]{$1$} (v)
		(v) edge node[below]{$2$} (w)
		(w) edge node[above]{$3$} (u)
		(x) edge node[right]{$4$} (y)
		(y) edge node[below]{$5$} (z)
		(z) edge node[above]{$6$} (x)
	;
	\path (u) edge node[above]{$1$} (x);
	
	\node (p1) at (1,0) [label=below:{$T$}]{};
	\node (p2) at (4,0) [label=below:{$T^\prime$}]{};
	\end{tikzpicture}
	\caption{Two rainbow triangles in a triangle 2-factor induced by lonely edges.}
	\label{Fig:Triangle2Factor}
\end{figure}

Up to symmetry, we can now assume that $c(ux)=1$ (see Figure \ref{Fig:Triangle2Factor}). The colors of those edges between $T$ and $T'$ can be determined in the following way, where ``$\xRightarrow{uxyzu}$'' means ``it follows from the assumption that $uxyzu$ is not a properly colored $C_4$ that''.
\begin{table}[H]
	\centering
	\begin{tabular}{ccccc}
	$c(ux)=1$ & $\xRightarrow{uxyzu}$ & $c(uz)=1$ & $\xRightarrow{uzxyu}$ & $c(uy)=1$ \cr
	$\Downarrow {vwuxv}$ & & & & \cr
	$c(vx)=2$ & $\xRightarrow{vxyzv}$ & $c(vz)=2$ & $\xRightarrow{vzxyv}$ & $c(vy)=2$ \cr
	$\Downarrow {wuvxw}$ & & & & \cr
	$c(wx)=3$ & $\xRightarrow{wxyzw}$ & $c(wz)=3$ & $\xRightarrow{wzxyw}$ & $c(wy)=3$. \cr
	\end{tabular}
\end{table}

With all edges between $T$ and $T'$ having the non-lonely color incident to the corresponding vertex in $T$, we may construct an auxiliary digraph $D$ with a vertex for each triangle and a directed edge from $T$ to $T'$ when all edges from $T$ to $T'$ have the colors of $T$. Since each pair of triangles is connected by a directed edge, $D$ is a tournament.

If $D$ is not transitive, then there exists a directed triangle, say $TT'T\pp$. Let $T$ and $T'$ be colored as above and let $T\pp$ be $abc$ with $ab, bc, ac$ having colors $7, 8, 9$ respectively. By the construction of $D$, we also have all edges from $x, y, z$ to $T\pp$ having colors $4, 5, 6$ respectively and all edges from $a, b, c$ to $T$ having colors $7, 8, 9$ respectively. Then the cycle $uxabu$ is rainbow (and therefore properly colored) for a contradiction. This means that $D$ must be transitive. If we let $T$ be the source of the transitive tournament $D$, then $T$ is a rainbow triangle in $G$ with all edges from $u, v, w$ having colors $1, 2, 3$ respectively, and we have the desired result with Structure~(2). Thus, we have shown that the set of lonely edges does not induce a rainbow triangle $2$-factor.
\end{proof}

By Subclaim~\ref{SubClaim:No2-Factor}, there exists a triangle with an extra pendant edge among the lonely edges. Let $T = xyz$ be the triangle where $xy, yz, xz$ have colors $1, 2, 3$ where $y, z, x$ are lonely in colors $1, 2, 3$ respectively. Let $ux$ be the additional edge in color $4$ with $x$ being lonely in color $4$.

Since $z$ is lonely in color $2$, the edge $uz$ must have color $4$ as otherwise $uxyzu$ would be a properly colored $C_{4}$. Since $y$ is lonely in color $1$, the edge $uy$ must also have color $4$ as otherwise $uzxyu$ would be a properly colored $C_{4}$. Notice that this argument never used the fact that $u$ was the head of color $4$ or that $x$ was lonely in color $4$. This means that every vertex in $G \setminus T$ with an edge to the triangle in a color other than $1, 2, 3$ must have all edges in the same color to the triangle. Let $A$ be the set of vertices each with all one color on edges to $T$ and note that $A$ is not empty since $u \in A$.

Suppose there is a vertex $w \in G \setminus (A \cup T)$. By the previous argument, this means that $w$ must have only edges of colors $1, 2, 3$ to $T$. Note that since $x$ is lonely in color $3$, the $c(wx) \in \{1, 2\}$. First suppose $c(wx) = 2$. Then since $z$ is lonely in color $2$, the cycle $wxyzw$ must be properly colored for a contradiction. This means that $c(wx) = 1$ and symmetrically, $c(wy) = 2$ and $c(wz) = 3$. Let $B$ be the set of vertices each with the same colors as $w$ to $T$ and note that $B$ may be empty (see Figure \ref{Fig:TABstructure}).

\begin{figure}[bht]
	\centering
	\begin{tikzpicture}[x=0.8cm, y=0.8cm]
	\vertex (x) at (0,5) [label=left:{$x$}]{};
	\vertex (y) at (0,3) [label=left:{$y$}]{};
	\vertex (z) at (1.5,4) [label=above:{$z$}]{};
	\vertex (u) at (4,4.5) [label=above:{$u$}]{};
	\vertex (w) at (1,1) [label=below:{$w$}]{};
	
	\path[->]
		(x) edge node[left]{$1$} (y)
		(y) edge node[above]{$2$} (z)
		(z) edge node[right]{$3$} (x)
		(u) edge node[above]{$4$} (x)
	;
	\path
		(u) edge node[below]{$4$} (y)
		(u) edge node[above]{$4$} (z)
		(w) edge node[right]{$1$} (x)
		(w) edge node[left]{$2$} (y)
		(w) edge node[right]{$3$} (z)
	;
	\path (u) edge node[right]{$4$} (w);
	
	\node (p1) at (-1,3.5) [label=center:{$T$}]{};
	\node (p2) at (5,4) [label=right:{$A$}]{};
	\node (p3) at (4,1) [label=right:{$B$}]{};
	
	\draw (2,1) ellipse (2 and 0.8) node {};
	\draw (4,4) ellipse (0.8 and 1.5) node {};	
	\end{tikzpicture}
	\caption{The structure of $G$ with partition $V(G)=T\cup A\cup B$.}
	\label{Fig:TABstructure}
\end{figure}
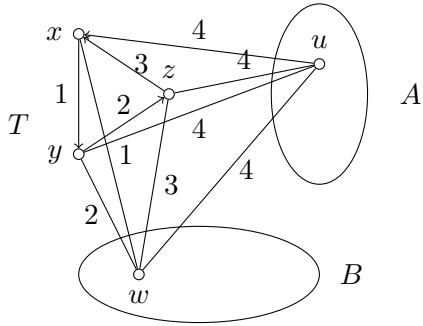

Let $u$ and $v$ be two vertices in $A$ (if two such vertices exist) and suppose $u$ and $v$ have colors $4$ and $5$ respectively on all edges to $T$. To avoid a properly colored $C_{4}$, the edge $uv$ must have either color $4$ or $5$.

Let $u \in A$ and $w \in B$ say with $u$ having color $4$ on edges to $T$. To avoid a properly colored $C_{4}$ in a variety of places using vertices of $T$, the edge $uw$ must have color $4$. This means that each vertex in $A$ has all one color on all edges to $B \cup T$.

An important claim can be deduced here. Here we need the definition of dependence property. A subset $Z$ of $V(G)$ is said to have \emph{dependence property} if each vertex in $Z$ has all one color (assign this color to $v$) on all edges to $G\setminus Z$, and each edge $uv$ in $G[A]$ has either the color of $u$ or the color of $v$. Of course, we let $Z = A$ and $G \setminus Z = B \cup T$.

\begin{subclaim}\label{Subclaim:Tourney}
Let $G$ be an edge-colored $K_n$ using $n$ colors and containing no properly colored $C_4$. If there is a subset $Z$ of $V(G)$ that has dependence property, then for distinct vertices $u$ and $v$, the color of $u$ and the color of $v$ are distinct.
\end{subclaim}
\begin{proof}
Suppose not, then $d^s(Z):=c(G)-c(G-Z)\leq|Z|-1$. Thus $c(G-Z)\geq c(G)-|Z|+1=|G-Z|+1$. It follows from Theorem~\ref{Thm:nP1C4} that $G-Z$ contains a properly colored $C_4$, a contradiction.
\end{proof}

Note that with the current structure, no vertices of $B$ can form a properly colored $C_{4}$ with any vertices of $G \setminus B$. This means that we will treat $B$ as a black box from now on. Also note that if $A = \emptyset$ (although we already know it is not), then $G$ would have Structure~(2).

Our goal is to show that, within $A$, there is either a vertex with all one color on its edges, yielding Structure~(1), or a rainbow triangle with appropriately colored edges to yield Structure~(2). With these two goals in mind, we may focus entirely within $A$ since we have already shown that either of these structures would extend from $A$ to all of $G$.

Each vertex in $A$ has an associated color, the color of its edges to $T$. We may therefore color the vertices of $A$ with their associated colors. Given two vertices $u, v \in A$, we have already shown that the color of $uv$ must be either the color of $u$ or the color of $v$. We may therefore orient all edges toward the vertex of the same color.
It follows from Subclaim~\ref{Subclaim:Tourney} that $D$ is a tournament since all colors must be distinct.

If $D$ has a sink, then this sink has all one color on all incident edges, so $G$ has Structure~(1).
This means that there is a directed cycle in $D$. Moreover, this means that there must be a directed triangle $U$ containing no two vertices of the same color.

If all edges from $D \setminus U$ to $U$ can be oriented to point toward $U$, then $A$ (and similarly $G$) has Structure~(2). Thus, suppose there exists a vertex $w$ that receives an arc directed away from $U$. To avoid producing a properly colored $C_{4}$, all other arcs must also be directed from $U$ to $w$. Thus, we may partition $D \setminus U$ into two vertex sets $IN$ and $OUT$, consisting of those vertices with all incident edges pointing into $U$ and those vertices with all incident edges pointing out of $U$ respectively.

Let $w_{1} \in IN$ and $w_{2} \in OUT$. Then to avoid a properly colored $C_{4}$, the edge $w_{1}w_{2}$ must be directed from $w_{1}$ to $w_{2}$. In general, all edges between $IN$ and $OUT$ must be directed from $IN$ to $OUT$. Since all edges of $D \setminus OUT$ point toward $OUT$, we may reduce the problem to a strictly smaller question of finding either Structure~(1) or Structure~(2) within $OUT$, to complete the proof of Claim~\ref{Claim:NonHead}.
\end{proof}

The end of the previous argument can be formalized in the following fact.

\begin{fact}\label{Fact:DP}
Let $G$ be an edge-colored $K_n$ using $n$ colors and containing no properly colored $C_4$. If there exists a set of vertices $A$ that can be colored so that all edges from a vertex $v\in A$ to $G\setminus A$ have the color of $v$ and all edges between two vertices $u,v\in A$ have either the color of $u$ or the color of $v$, then G has either Structure~(1) or Structure~(2).
\end{fact}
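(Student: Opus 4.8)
My plan is to observe that the hypothesis is precisely the statement that $A$ has the dependence property, so this fact merely repackages the final portion of the proof of Claim~\ref{Claim:NonHead}; I would formalise that argument as an induction on $|A|$. First I would color each vertex $v\in A$ by its associated color (the common color of its edges to $G\setminus A$) and invoke Subclaim~\ref{Subclaim:Tourney} to conclude that distinct vertices of $A$ receive distinct colors. Consequently, for any edge $uv$ of $G[A]$ the hypothesis that $c(uv)$ is the color of $u$ or of $v$ is unambiguous, and orienting each such edge toward the endpoint whose color it carries yields a well-defined tournament $D$ on $A$.

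Next I would split into two cases according to $D$. If $D$ has a sink $s$, then every edge of $G[A]$ at $s$ has color $c(s)$, and by the dependence property every edge from $s$ to $G\setminus A$ also has color $c(s)$; hence $s$ has all incident edges in one color and $G$ has Structure~(1). Otherwise $D$ has no sink, so it contains a directed triangle $U=u_1u_2u_3$ whose three vertices carry three distinct colors, and the cyclic orientation forces $c(u_iu_{i+1})$ to equal the color of $u_{i+1}$ (indices mod $3$), which is exactly the edge pattern of Structure~(2). If every edge from $A\setminus U$ to $U$ points into $U$, then for each $i$ all edges of $u_i$ to $A\setminus\{u_i\}$ and to $G\setminus A$ carry $c(u_i)$ except the single triangle edge to its out-neighbour, so $U$ realises Structure~(2) in $G$.

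It remains to treat the case where some vertex receives an edge directed out of $U$. Here the usual $C_4$-avoidance arguments (exactly as in the claim) force that such a vertex $w$ has all of its edges to $U$ directed out of $U$, partitioning $A\setminus U$ into a set $IN$ whose edges to $U$ all point in and a nonempty set $OUT$ whose edges to $U$ all point out, with every edge between $IN$ and $OUT$ directed from $IN$ to $OUT$. The crux of the proof, and the one step I expect to require care, is then to verify that $OUT$ again has the dependence property relative to $G\setminus OUT$: each $v\in OUT$ must have every edge to $U$, to $IN$, and to $G\setminus A$ in color $c(v)$ (the first two because those edges point into $v$, the third by the dependence property of $A$), while internal edges of $OUT$ inherit the one-endpoint-color property directly from $A$. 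Once this is checked, $|OUT|<|A|$ lets me apply the induction hypothesis to $OUT$ and obtain Structure~(1) or Structure~(2); the base case $|A|=1$ is immediate, since the single vertex then has all incident edges in one color.
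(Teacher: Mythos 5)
Your proposal is correct and follows essentially the same route as the paper: the paper explicitly presents Fact~\ref{Fact:DP} as a formalization of the end of the proof of Claim~\ref{Claim:NonHead}, which is exactly the coloring-of-$A$ / Subclaim~\ref{Subclaim:Tourney} tournament, sink~$\Rightarrow$~Structure~(1), directed triangle with $IN$/$OUT$ partition, and recursion onto $OUT$ that you describe. Your only additions are to make the recursion an explicit induction on $|A|$ and to verify that $OUT$ inherits the dependence property, which is a correct elaboration of the paper's ``strictly smaller question'' remark.
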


By Claim~\ref{Claim:NonHead} and since we have $n$ colors on the edges, there must be a vertex $u$ that is the head of at least two colors. Call this vertex a \emph{shared head} and the colors dominated by this shared head are called \emph{shared colors}.

\begin{claim}\label{Claim:OneStar}
All but at most one of the shared colors of a shared head must induce a star.
\end{claim}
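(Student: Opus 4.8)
The plan is to translate the conclusion into a statement about monochromatic triangles and then force a properly colored $C_4$ directly. Since $u$ is the head of each of its shared colors, Lemma~\ref{Lemma:Dominating} tells us that $u$ is a dominating vertex of each corresponding color class $G^i$. I would first observe that, for a shared color $i$, the class $G^i$ fails to be a star exactly when some color-$i$ edge avoids $u$: if $ab$ is a color-$i$ edge with $a,b\neq u$, then domination gives $ua,ub\in G^i$, so $uab$ is a monochromatic triangle, which is not a star; conversely, if every color-$i$ edge meets $u$ then $G^i$ is a star centered at $u$ (a single edge being the degenerate case). Thus the claim is equivalent to the assertion that at most one shared color of $u$ contains a monochromatic triangle through $u$.

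Next I would argue by contradiction. Suppose two shared colors, say $1$ and $2$, each contain a monochromatic triangle through $u$, arising from a color-$1$ edge $ab$ and a color-$2$ edge $cd$ with $a,b,c,d\neq u$. Because each edge carries a single color and $ua,ub$ have color $1$ while $uc,ud$ have color $2$, the pairs $\{a,b\}$ and $\{c,d\}$ are disjoint, so $a,b,c,d,u$ are five distinct vertices.

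The decisive step uses domination a second time to pin down the cross edges. Consider the edge $bc$: if it had color $1$, then $c$ would be non-isolated in $G^1$, forcing $uc$ to have color $1$ and contradicting $c(uc)=2$; if it had color $2$, then $b$ would lie in $G^2$, forcing $c(ub)=2$ and contradicting $c(ub)=1$. Hence $c(bc)\notin\{1,2\}$, and the symmetric argument gives $c(ad)\notin\{1,2\}$. Now the cycle $a\,b\,c\,d\,a$ has edges colored $1,\ c(bc),\ 2,\ c(ad)$ in cyclic order, and since $c(bc),c(ad)\notin\{1,2\}$ every pair of consecutive edges receives distinct colors. This is a properly colored $C_4$, the desired contradiction, so at most one shared color of $u$ is a non-star.

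I expect the main obstacle to be the bookkeeping in the first paragraph---correctly characterizing ``not a star'' in the presence of the dominating vertex $u$ and ensuring that a degenerate single-edge class is counted as a star---together with the recognition that it is precisely the domination property (not merely the adjacency of $u$ to $a,b,c,d$) that forbids the cross edges $bc$ and $ad$ from reusing colors $1$ or $2$. Without this observation, the four vertices $a,b,c,d$ could a priori be colored so as to destroy every candidate $C_4$, so isolating the role of domination is the crux of the argument.
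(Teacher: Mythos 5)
Your proof is correct and follows essentially the same route as the paper's: both arguments extract a monochromatic edge in each of the two non-star shared colors avoiding the head $u$ (the paper takes a spine vertex and a tail of each color), use the domination property of the head to force the two cross edges off colors $1$ and $2$, and assemble the resulting properly colored $C_4$. The only difference is bookkeeping---you characterize ``not a star'' via a monochromatic triangle through $u$, while the paper phrases the same thing via spines and tails.
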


\begin{proof}
Suppose not, so suppose $u$ is a shared head, say with shared colors red and blue, each of which inducing a subgraph that is not a star. This means that each spine, of red and blue, contains at least $2$ vertices and a tail. Let $r_{2}$ (and $b_{2}$) be a spine vertex (other than $u$) of the red (respectively blue) spines. Let $t_{r}$ (and $t_{b}$) be the tail vertex of the red (respectively blue) spine.

Next we show that $\{ u, r_{2}, b_{2}, t_{r}, t_{b}\}$ is a set with no repetition. Certainly $\{u, r_{2}, t_{r}\}$ are all distinct and $\{u, b_{2}, t_{b}\}$ are all distinct. If there is any overlap between these sets, say for example $r_{2} = t_{b}$, then the edge $ur_{2} = ut_{b}$ must be both red and blue by the definition of a spine, clearly a contradiction.

Next we show that $r_{2}b_{2}$ and $t_{r}t_{b}$ are both neither red nor blue. For a contradiction, suppose that, for example, $t_{r}t_{b}$ is red. Then by the definition of a spine, since $u$ has a red edge to every vertex with at least one incident red edge, the edge $ut_{b}$ must be red. This is a contradiction because $ut_{b}$ must be blue since $t_{b}$ is the tail of the blue spine.

With neither of $r_{2}b_{2}$ and $t_{r}t_{b}$ being red or blue, we obtain a properly colored $C_{4}$ on $r_{2}b_{2}t_{b}t_{r}r_{2}$, for a contradiction.
\end{proof}

\begin{claim}\label{Claim:SingleEdge}
All but at most one of the shared colors at a shared head must induce a single edge.
\end{claim}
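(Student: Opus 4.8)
The plan is to argue by contradiction, and the key idea is to \emph{not} hunt for a properly colored $C_4$ directly; instead I will exhibit a vertex of colored saturated degree $0$, which is already impossible, since the opening of this proof established that $d^s(v)\ge 1$ for every $v$ (equivalently, deleting such a vertex leaves a $K_{n-1}$ still carrying $n\ge (n-1)+1$ colors, so Theorem~\ref{Thm:nP1C4} would produce a properly colored $C_4$). Suppose two shared colors of the shared head $u$ each induce more than a single edge. By Claim~\ref{Claim:OneStar}, at least one of them, say blue, induces a star $K_{1,t}$ with $t\ge 2$ centered at $u$; fix two of its leaves $b_1,b_2$, each having no blue edge other than to $u$. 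Since $G$ has no Structure~(1) vertex, $u$ also has an incident edge of some other color, say a red edge $ua$. I will show $d^s(b_1)=0$.

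First I would pin down the color of every edge at $b_1$ by means of $4$-cycles anchored at $u$, partitioning the other vertices according to the color of their edge to $u$. For a red-neighbour $a'$ of $u$, the chords $b_1a'$ and $b_1b_2$ are not blue (as $b_1,b_2$ are blue only to $u$) and are not red (else $u$ would send a red edge to $b_1$); hence in the cycle $ua'b_1b_2u$ every adjacency except $(a'b_1,b_1b_2)$ is automatically proper, so avoiding a properly colored $C_4$ forces $c(a'b_1)=c(b_1b_2)=:p$, and symmetrically $c(a'b_2)=p$. The same mechanism (now using the known edge $ab_1=p$) forces $c(b_1b_j)=p$ for every other blue leaf $b_j$. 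For a vertex $w$ whose edge $uw$ is neither red nor blue, the edges $wa$ and $wb_1$ are not red and not blue respectively, so in $wau b_1 w$ the only nontrivial adjacency is $(b_1w,wa)$, forcing $c(b_1w)=c(aw)$. It is essential here that the witness color red is \emph{fixed}: using an arbitrary non-blue color at $u$ would leave a disjunctive escape, whereas knowing $c(wa)\ne\text{red}$ collapses the condition to a single equality.

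Putting these together, every color occurring at $b_1$ also occurs on an edge avoiding $b_1$: blue is witnessed by $ub_2$, the color $p$ (on all edges from $b_1$ to red-neighbours and blue leaves) is witnessed by $a'b_2$, and each external color $c(b_1w)$ is witnessed by $aw$. Thus no color is private to $b_1$, i.e.\ $d^s(b_1)=0$, the desired contradiction; this in fact shows the stronger statement that no shared color of $u$ is a star with two or more leaves, so together with Claim~\ref{Claim:OneStar} (which permits at most one non-star) at most one shared color fails to be a single edge. The step I expect to be the genuine obstacle is conceptual rather than computational: the forced local picture is a monochromatic clique in color $p$ joining the leaves and the red-neighbours, and such a configuration is honestly free of a properly colored $C_4$, so any attempt to display one locally is doomed; the workable route is precisely to certify that $b_1$ carries no private color and invoke Theorem~\ref{Thm:nP1C4}. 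The only routine points needing care are the repeated appeals to ``$b_1$ is blue only to $u$'' to exclude red and blue on the relevant chords, and checking that each witnessing edge ($ub_2$, $a'b_2$, $aw$) really avoids $b_1$, which is why one needs the second leaf $b_2$ and the red-neighbour $a$ to be distinct from $b_1$.
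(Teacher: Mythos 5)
Your argument is sound in outline and reaches the claim by a genuinely different route from the paper's. The paper also argues by contradiction from two non-single-edge shared colors, but it then forces a global structure (all edges between the red and blue neighbourhoods of $u$ are green, the red neighbourhood is a green clique, the blue neighbourhood uses only blue and green) and derives the contradiction by counting colors over all of $G$: five vertices contribute three colors and every further vertex at most one more, so $c(G)\le n-2$. You instead localize everything at one blue leaf $b_1$ and certify that no color is private to it, so that $G-b_1$ is a $K_{n-1}$ still carrying $n$ colors and Theorem~\ref{Thm:nP1C4} applies. This is shorter, uses only the star color plus one auxiliary edge, and in fact proves the stronger statement that no shared color of a shared head is a star with two or more leaves (which is consistent with the structures the paper ultimately obtains). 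The colorings you force via the cycles $ua'b_1b_2u$, $uab_1b_ju$ and $waub_1w$, and the bookkeeping of witnesses $ub_2$, $a'b_2$, $aw$, all check out.

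One step does need repair. You introduce red merely as ``some other color incident to $u$,'' but both of your ``not red'' deductions --- $c(a'b_1)\ne\text{red}$ ``else $u$ would send a red edge to $b_1$,'' and $c(wa)\ne\text{red}$ whenever $uw$ is not red --- require that $u$ \emph{dominates} the red color class, i.e.\ that every vertex meeting a red edge is red-adjacent to $u$. An arbitrary second color at $u$ need not have this property ($u$ could be a leaf of a red star headed elsewhere), and without it the cycle $ua'b_1b_2u$ yields only the disjunction $c(a'b_1)\in\{\text{red},\,p\}$ and the argument stalls. The fix is already contained in your hypotheses: since you are refuting the claim, $u$ has a second \emph{shared} color besides blue; take red to be that color, so that $u$ is its head and hence a dominating vertex of its class. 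With that substitution the proof goes through.
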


\begin{proof}
Let $u$ be a shared head, say with shared colors red and blue. Without loss of generality, suppose $u$ has at least two edges in each of red and blue, say red to vertices $v$ and $w$, and blue to vertices $x$ and $y$. By Claim~\ref{Claim:OneStar}, one of these colors, say red, must induce a star. Since red induces a star, we know that the edge $vw$ is not red and $vw$ is certainly not blue since $u$ is the head of blue. Let green be the color of the edge $vw$.

In order to keep the $4$-cycle $uvwxu$ from being properly colored, the edge $wx$ must be either green or blue but $w$ cannot have any blue edges since $u$ is the head of blue and the edge $uw$ is already colored red. By symmetry, this means that all edges between $\{v, w\}$ and $\{x, y\}$ (the blue neighborhood of $u$) must be green.

We next show that the vertices in the red neighborhood of $u$ induce (in $G$) a complete graph in green. Otherwise, there is a non-green edge, say purple, adjacent to a green edge, say $vw$, within the red neighborhood of $u$. Without loss of generality, let $wz$ be this purple edge. By the previous argument, $w$ has all green edges to the blue neighborhood of $u$ but by symmetry, $\{w, z\}$ must have all purple edges to the blue neighborhood of $u$, clearly a contradiction.

With these two observations together, we see that all edges between the red neighborhood of $u$ and the blue neighborhood of $u$ must be green. Furthermore, if there was an edge, say $xy$, within the blue neighborhood of $u$ that is not either blue or green, then $uxyvu$ is a rainbow (and therefore properly colored) $C_{4}$. This means that all edges within the blue neighborhood of $u$ must be either blue or green.

Since we have considered at least $5$ vertices so far ($\{u, v, w, x, y\}$) and these vertices induce a subgraph of $G$ using only $3$ colors, there must be more vertices in $G$ that are not in the red or blue neighborhoods of $u$. Let $z$ be such a vertex. Then $z$ has a new color on its edge to $u$, say purple (see Figure \ref{Fig:RedBlueNeibor}).

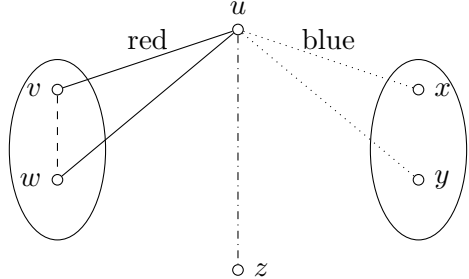
\begin{figure}[bht]
	\centering
	\begin{tikzpicture}[x=0.8cm, y=0.8cm]

	\vertex (v) at (0,3) [label=left:{$v$}]{};
	\vertex (w) at (0,1.5) [label=left:{$w$}]{};
	\vertex (u) at (3,4) [label=above:{$u$}]{};
	\vertex (z) at (3,0) [label=right:{$z$}]{};
	\vertex (x) at (6,3) [label=right:{$x$}]{};
	\vertex (y) at (6,1.5) [label=right:{$y$}]{};
	
	\path (u) edge node[above]{red} (v) edge (w);
	\path[dotted] (u) edge node[above]{blue} (x) edge (y);
	\path[dashed] (v) edge (w);
	\path[dashdotted] (u) edge (z);
	
	
	\draw (0,2) ellipse (0.8 and 1.5) node {};
	\draw (6,2) ellipse (0.8 and 1.5) node {};	
	\end{tikzpicture}
	\caption{The red and the blue neighborhood of $u$.}
	\label{Fig:RedBlueNeibor}
\end{figure}


By considering $zuvwz$, we see that $c(zw)$ is either purple or green. Also by considering $zwuyz$, we see that $c(zy)=c(zw)$ ($c(zy)$ cannot be blue since $u$ is the head of blue). By symmetry, the color of all the edges from $z$ to both the red and blue neighborhoods should be all purple or all green and these are the only two possible cases.

Let $z_1$ and $z_2$ be two vertices not in the blue or red neighborhood of $u$. By the above discussion we know that $c(z_1w)$ is either green or $c(z_1u)$. It follows from the fact that the cycle $z_1wuz_2z_1$ is not properly colored, that
$$
c(z_1z_2)\in \{c(z_1w),c(z_2u)\}\subseteq \{\text{green},c(z_1u),c(z_2u)\}.
$$

Finally, this means that $u$ along with its red and blue neighborhoods contain at least $5$ vertices but contribute only $3$ colors to $G$, and each additional vertex contributes at most one new color to $G$, meaning that $G$ is colored with at most $n - 2$ colors, a contradiction.
\end{proof}

Suppose $u$ is the shared head with lonely color red to lonely vertex $v$. Let blue be the other color for which $u$ is the head and let $x$ be a vertex in the blue neighborhood of $u$. Then $xv$ must be a third color, say green.

Suppose there exists a vertex $w$ with a new color, say purple, to $x$. Then $wv$ must also be purple since otherwise $wvuxw$ would be a properly colored $C_{4}$. Also $wxvuw$ would be a properly colored $C_{4}$ unless $wu$ is also purple.

Let $A$ be the set of vertices with colors other than blue or green to $x$. Color each vertex of $A$ with the color of its edge to $x$.
	
The remaining vertices in $V(G)-(\{u,v,x\}\cup A)$ have an edge to $x$ with color either green or blue. For a vertex $z \in G \setminus \{v\}$ with a green edge to $x$. Then to avoid a properly colored $C_{4}$ on $zxuvz$, the edge $zv$ must also be green ($zu$ is unknown). By the same argument, we can show that for a vertex $z' \in G \setminus \{u\}$ with a blue edge to $x$. Then $z'u$ is blue ($z'v$ is not blue).

Let $A^\prime$ be a set of vertices including $A$ constructed in the following way: if such a vertex as $z$ satisfying $zu$ is green, then consider this vertex as part of $A^\prime$ with associated color green (i.e. the color of $zx,zv,zu$ are all green). Thus the edges from each vertex in $A^\prime$ to $\{u,v,x\}$ have the same color.

We will show that $A^\prime$ has dependence property. In the following discussion, keep in mind that neither red nor blue edges are incident to vertices in $A^\prime$. Let $w,y$ be two vertices in $A^\prime$ (if two such vertices exist). To avoid a properly colored $C_4$ on $wyvuw$, we see that the edge $wy$ must have either the color of $w$ or the color of $y$. For each vertex $z$ with $zv$ green and $zu$ not green, it follows from the assumption that $zvuwz$ and $zuvwz$ are not properly colored that $zw$ must be the color of $w$. Similarly, for each vertex as $z^\prime$ with $z^\prime u$ blue and $z^\prime v$ not blue, it follows from the assumption that $zvuwz$ and $zuvwz$ are not properly colored that $z^\prime w$ must be the color of $w$.

If $A^\prime$ is not empty, by Fact 1, the graph $G$ has either Structure (1) or Structure (2), as required. We may therefore assume $A^\prime$ is empty.

Thus, each of the vertices in $G-\{u,v,x\}$ has one of two types, either as $z$ with $zx$, $zv$ green $zu$ not green, or as $z^\prime$ with $z^\prime x$, $z^\prime u$ blue, $z^\prime v$ not blue (See Figure \ref{Fig:Twotype}). Let
\begin{eqnarray*}
	Z&:=&\bigl\{w\in G-\{u,v,x\}~\big|~c(wx)=c(wv)=\text{green}, c(zu)\neq \text{green}\bigr\},\\
	Z^\prime &:=& \bigl\{w\in G-\{u,v,x\}~\big|~c(wx)=c(wu)=\text{blue}, c(wv)\neq \text{blue}\bigr\}.
\end{eqnarray*}

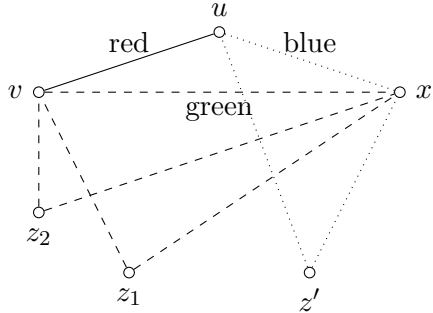
\begin{figure}[bht]
	\centering
	\begin{tikzpicture}[x=0.8cm, y=0.8cm]
	
	\vertex (v) at (0,3) [label=left:{$v$}]{};
	\vertex (u) at (3,4) [label=above:{$u$}]{};
	\vertex (x) at (6,3) [label=right:{$x$}]{};
	\vertex (z2) at (0,1) [label=below:{$z_2$}]{};
	\vertex (z1) at (1.5,0) [label=below:{$z_1$}]{};
	\vertex (z0) at (4.5,0) [label=below:{$z^\prime$}]{};
	
	\path (u) edge node[above]{red} (v);
	\path[dotted] (u) edge node[above]{blue} (x);
	\path[dotted] (z0) edge (u) edge (x);
	\path[dashed] (v) edge node[below]{green} (x);
	\path[dashed] (z1) edge (v) edge (x);
	\path[dashed] (z2) edge (v) edge (x);

	\end{tikzpicture}
	\caption{Two types of the vertices in $G-\{u,v,x\}$.}
	\label{Fig:Twotype}
\end{figure}

If there exist $z\in Z$ and $z^\prime\in Z^\prime$, then it follows from $zvuz^\prime z$ is not properly colored that
\[
c(z z^\prime)\subseteq \{c(z v),c(z^\prime u)\}=\{\text{green}, \text{blue}\}.
\]
Then $c(zu)$ and $c(z^\prime v)$ cannot both be colors other than blue or green (respectively). Otherwise $zuz^\prime vz$ is a properly colored $C_4$. Thus $G[\{u,v,x,z,z^\prime\}]$ contains at most $4$ colors and each other vertex contributes at most one new color to $G$. Thus $G$ contains at most $n-1$ colors, a contradiction.

So either $Z=\emptyset$ or $Z'=\emptyset$. By symmetry, we assume that $Z'=\emptyset$. For two vertices $z_1,z_2\in Z$ (if they exist), it follows from $z_1 uvz_2z_1$ and $z_1 xuz_2z_1$ are not properly colored that
\[
	c(z_1z_2)\subseteq \{c(z_1u),\text{green}\}\cap \{c(z_2u),\text{green}\}.
\]
Moreover, $G[\{u,v,x\}]$ has $3$ colors and each other vertex contributes at most one more color to $G$. In order to have $n$ colors in $G$, each other vertex has to contribute exactly one more color to $G$. There follows
\[
	c(z_1u)\neq c(z_2u)\quad \text{and}\quad c(z_1z_2)=\text{green}.
\]
Which means $G$ has Structure (3) in which $u$ is the center of the rainbow star, as required. (For the case there are only vertices as $z^\prime$, the center of the rainbow star is $v$.)
\end{proof}

\section{Proof of Theorem~\ref{thm_C4inG}}

\begin{proof}
We prove this result by induction on $n$. It is easy to check when $n=4$. Assume the conclusions are true for all graphs with order less than $n$ where $n \geq 5$.
	
This theorem has two parts. We prove the latter part, so then the former part follows in the following way: For $G$ with $e(G)+c(G)\geq \frac{n(n+1)}{2}+1$, if $G$ is not rainbow, we can delete edges with repeating colors one by one, until either we get a subgraph $G_0$ with $e(G_0)+c(G_0)=\frac{n(n+1)}{2}$ (then there is a properly colored $C_4$ by the latter conclusion since $G_0$ is not complete) or we get a rainbow subgraph $G_0$ with $e(G_0)\geq \frac{n(n+1)}{4}+\frac{1}{2}$. For $n=5$, we have
$$
e(G_0)\geq\frac{n(n+1)}{4}+\frac{1}{2}=8.
$$
Easy to find a $C_4$ (which is rainbow) in $G_0$. For $n\geq 6$, by Lemma \ref{lem_Kovari1954}, we get
$$
e(G_0)>\frac{n(n+1)}{4}>\frac{1}{2}n^{3/2}+\frac{1}{2}n\geq ex(n,C_4).
$$
Thus $G_0$ contains a $C_4$ which is rainbow (also properly colored).

Now we prove the latter part of this theorem. Let $G$ be an edge-colored graph with order $n$, satisfying $e(G)+c(G)=\frac{n(n+1)}{2}$ and there is no properly colored $C_4$ in $G$.

\setcounter{claim}{0}

\begin{claim}
$G$ is not rainbow.
\end{claim}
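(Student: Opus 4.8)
The plan is to argue by contradiction: assume $G$ is rainbow and derive a conflict with the exact edge count forced by the hypothesis $e(G)+c(G)=\frac{n(n+1)}{2}$. The first observation is that if $G$ is rainbow, then every $C_4$ appearing in $G$ is automatically properly colored, since in a rainbow graph all edges, and in particular any two adjacent edges, carry distinct colors. As $G$ contains no properly colored $C_4$ by hypothesis, it follows that $G$ contains no $C_4$ at all as a subgraph; that is, $G$ is $C_4$-free.

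Next I would use rainbowness to pin down $e(G)$. Being rainbow gives $c(G)=e(G)$, so the hypothesis becomes $2e(G)=\frac{n(n+1)}{2}$, i.e. $e(G)=\frac{n(n+1)}{4}$. I then play this exact count off against the extremal bound for $C_4$-free graphs: since $K_{2,2}\cong C_4$, Lemma~\ref{lem_Kovari1954} yields $e(G)\le ex(n,C_4)\le \frac12 n^{3/2}+\frac12 n$. Combining the two gives $\frac{n(n+1)}{4}\le \frac12 n^{3/2}+\frac12 n$, which (after multiplying by $4$, dividing by $n$, and substituting $x=\sqrt n$) rearranges to $x^2-2x-1\le 0$, i.e. $n\le 3+2\sqrt2<6$. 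Hence for every $n\ge 6$ the two bounds are incompatible, and we reach a contradiction, so $G$ cannot be rainbow.

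It remains only to dispose of the boundary value $n=5$ (recall $n\ge 5$ in the induction). Here a parity observation finishes the job: $\frac{n(n+1)}{4}=\frac{15}{4}$ is not an integer, so the forced equality $e(G)=\frac{n(n+1)}{4}$ cannot hold for the integer-valued $e(G)$, again a contradiction. More generally, the rainbow case is ruled out by integrality whenever $4\nmid n(n+1)$, i.e. $n\equiv 1,2 \pmod 4$, so only the residues $n\equiv 0,3\pmod4$ genuinely require the K\H{o}vari--S\'os--Tur\'an estimate from Lemma~\ref{lem_Kovari1954}.

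I do not expect a real obstacle in this claim. The only point needing any care is the numerical comparison of $\frac{n(n+1)}{4}$ with $\frac12 n^{3/2}+\frac12 n$, but the resulting inequality is quadratic in $\sqrt n$ and clean, and the lone small case $n=5$ is settled purely by integrality; everything else in the argument is immediate.
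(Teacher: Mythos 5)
Your proposal is correct and follows essentially the same route as the paper: the paper dismisses the case where $e(G)+c(G)$ is odd (your integrality observation at $n=5$) and then, for $n\geq 6$, compares $e(G)=\frac{n(n+1)}{4}$ against the K\H{o}vari--S\'os--Tur\'an bound $ex(n,C_4)\leq \frac{1}{2}n^{3/2}+\frac{1}{2}n$ exactly as you do. Your explicit reduction to $(\sqrt{n})^2-2\sqrt{n}-1\leq 0$ correctly verifies the numerical comparison the paper leaves implicit.
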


\begin{proof}
It is trivial when $e(G)+c(G)$ is odd, for example when $n=5$. When $e(G)+c(G)$ is even ($n\geq 6$), suppose $G$ is rainbow. It follows from Lemma \ref{lem_Kovari1954} that
$$
e(G)=\frac{n(n+1)}{4}>\frac{1}{2}n^{3/2}+\frac{1}{2}n\geq ex(n,C_4).
$$
Thus $G$ contains a $C_4$ which is rainbow (also properly colored), a contradiction.
\end{proof}

\begin{claim}
For all $v\in V(G)$, $d(v)+d^s(v)\geq n$.
\end{claim}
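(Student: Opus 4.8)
The plan is to prove this claim by a single vertex-deletion argument combined with the induction hypothesis. First I would record the effect of deleting a vertex $v$: removing $v$ destroys exactly its $d(v)$ incident edges, so $e(G-v)=e(G)-d(v)$, while $d^s(v)$ counts precisely the colors saturated at $v$ (those appearing only on edges incident to $v$), so $c(G-v)=c(G)-d^s(v)$. Adding these gives the key identity
$$
e(G-v)+c(G-v)=\bigl(e(G)+c(G)\bigr)-\bigl(d(v)+d^s(v)\bigr)=\frac{n(n+1)}{2}-\bigl(d(v)+d^s(v)\bigr).
$$

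Next I would suppose, toward a contradiction, that some vertex $v$ violates the claim, i.e.\ $d(v)+d^s(v)\leq n-1$. Plugging this into the identity and using the elementary arithmetic $\frac{n(n+1)}{2}-(n-1)=\frac{(n-1)n}{2}+1$, I obtain
$$
e(G-v)+c(G-v)\geq \frac{(n-1)n}{2}+1.
$$
Since $G-v$ has order $n-1<n$ (and $n-1\geq4$ because $n\geq5$), the induction hypothesis for Theorem~\ref{thm_C4inG} applies at order $n-1$, and the right-hand side above is exactly the threshold $\frac{(n-1)n}{2}+1$ that forces a properly colored $C_4$ in a graph on $n-1$ vertices. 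Hence $G-v$, and therefore $G$, contains a properly colored $C_4$, contradicting our standing hypothesis that $G$ has none. This forces $d(v)+d^s(v)\geq n$ for every $v$.

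I do not expect a genuine obstacle here: the argument is essentially the deletion identity matched against the induction threshold. The only points needing care are the arithmetic $\frac{n(n+1)}{2}-(n-1)=\frac{(n-1)n}{2}+1$, the verification that the induction hypothesis is available at order $n-1$ (guaranteed by $n\geq5$), and keeping straight that $d^s(v)$ is the color-saturated degree $c(G)-c(G-v)$ rather than the ordinary degree. Once these are in place the contradiction is immediate, and this per-vertex lower bound on $d(v)+d^s(v)$ is exactly the inequality that the later steps of the proof will aggregate.
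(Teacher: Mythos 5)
Your argument is correct and is essentially identical to the paper's proof: assume $d(v)+d^s(v)\leq n-1$ for some vertex, compute $e(G-v)+c(G-v)\geq \frac{n(n-1)}{2}+1$, and invoke the induction hypothesis on $G-v$ to get a contradiction. Your side remark that $d^s(v)=c(G)-c(G-v)$ is also the intended reading of the definition (the paper's statement $d^s(v):=e(G)-e(G-v)$ in Section 2 is a typo, as its later use of $d^s(Z):=c(G)-c(G-Z)$ confirms).
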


\begin{proof}
Suppose not, there is a vertex $u$ satisfying $d(u)+d^s(u)\leq n-1$. Then
\beqs
e(G-u)+c(G-u) & = & e(G)-d(u)+c(G)-d^s(u)\\
~ & \geq & \frac{n(n+1)}{2}-(n-1)\\
~ & = & \frac{n(n-1)}{2}+1.
\eeqs
	
By the induction hypothesis, $G-u$ contains a properly colored $C_4$, a contradiction.
\end{proof}

\begin{claim}\label{Claim:3}
There exists a vertex $u\in V(G)$, such that $d(u)+d^s(u)=n$.
\end{claim}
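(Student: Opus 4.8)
The plan is to argue by contradiction, combining a global double count of $d(v)+d^s(v)$ with the equality characterization in Lemma~\ref{lem_CSdegree}. Suppose no such vertex exists. Since Claim~2 already guarantees $d(v)+d^s(v)\geq n$ for every $v\in V(G)$, the failure of Claim~\ref{Claim:3} forces the strict improvement $d(v)+d^s(v)\geq n+1$ for all $v$. Summing over all $n$ vertices then gives $\sum_v\bigl(d(v)+d^s(v)\bigr)\geq n(n+1)$.

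Next I would split this sum using the two natural identities/bounds. On one hand $\sum_v d(v)=2e(G)$, and on the other hand Lemma~\ref{lem_CSdegree} supplies $\sum_v d^s(v)\leq 2c(G)$. Putting these together yields the chain
$$
n(n+1)\leq \sum_v\bigl(d(v)+d^s(v)\bigr)=2e(G)+\sum_v d^s(v)\leq 2e(G)+2c(G)=2\bigl(e(G)+c(G)\bigr).
$$
Because we sit exactly at the boundary value $e(G)+c(G)=\frac{n(n+1)}{2}$, the far right-hand side equals $n(n+1)$. Hence every inequality in the chain must in fact be an equality; in particular $\sum_v d^s(v)=2c(G)$.

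Finally I would invoke the equality case of Lemma~\ref{lem_CSdegree}, which states that $\sum_v d^s(v)=2c(G)$ holds if and only if $G$ is rainbow. This directly contradicts Claim~1, which established that $G$ is not rainbow. Therefore the standing assumption is impossible, and some vertex $u$ must satisfy $d(u)+d^s(u)=n$.

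I expect the argument to be essentially routine once the correct quantities are summed; the key point is that the extremal hypothesis $e(G)+c(G)=\frac{n(n+1)}{2}$ makes the vertexwise lower bound (Claim~2) and the global upper bound $\sum_v d^s(v)\leq 2c(G)$ meet with no slack whatsoever, so any improvement at \emph{every} vertex would be incompatible with $G$ failing to be rainbow. The only thing that requires care is confirming that the equality case of Lemma~\ref{lem_CSdegree} genuinely pins down rainbowness, so that the collision with Claim~1 is real rather than an artifact of a weaker bound.
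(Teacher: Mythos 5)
Your proposal is correct and matches the paper's argument: both sum $d(v)+d^s(v)\geq n+1$ over all vertices, use $\sum_v d(v)=2e(G)$ together with Lemma~\ref{lem_CSdegree}, and derive a contradiction from $e(G)+c(G)=\frac{n(n+1)}{2}$ via the non-rainbowness established in Claim~1. The only cosmetic difference is that the paper applies the strict inequality $\sum_v d^s(v)<2c(G)$ directly, whereas you first force equality throughout the chain and then invoke the equality case of the lemma; the logical content is identical.
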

	
\begin{proof}
Suppose not, i.e., for all $v\in V(G)$, $d(v)+d^s(v)\geq n+1$. By Lemma \ref{lem_CSdegree}, we have $\sum_v d^s(v)< 2c(G)$ since $G$ is not rainbow. Thus
$$
n(n+1)\leq\sum_v \bigl[d(v)+d^s(v)\bigr]<2e(G)+2c(G)=n(n+1),
$$
a contradiction.
\end{proof}
	
Let $u$ be the vertex guaranteed by Claim~\ref{Claim:3}. It is easy to see that $e(G-u)+c(G-u)=\frac{n(n-1)}{2}$. By the induction hypothesis, $G-u$ is complete. By  Theorem~\ref{thm_C4inKnstruc}, $G-u$ has a layered structure (see Figure \ref{Fig:C4inKnstruc}). Let $s$ be the center and $l_1,l_2$ be two leaves of the rainbow star in the center part (third part in Theorem \ref{thm_C4inKnstruc}) of this layered structure.

Suppose $G$ is not complete, i.e. $d(u)\leq n-2$. Then $d^s(u)\geq 2$. Let $uv_1$, $uv_2$ be two edges with distinct colors saturated by $u$. If $s\not\in \{v_1,v_2\}$, then $uv_1sv_2u$ is a rainbow $C_4$, a contradiction. If $s\in \{v_1,v_2\}$, say $v_1=s$, then $uv_2l_isu$ is a rainbow $C_4$, in which $l_i\neq v_2$ for some $i\in\{1,2\}$, a contradiction.
	
The proof is complete.
\end{proof}

\section{Proof of Theorem~\ref{Thm:DegSumforC4}}\label{Sec:relation}

\begin{theorem}\label{thm_starforest}
	If each color class of an edge-colored graph $G$ is a star forest, then $e(G)+c(G)\leq \sum_v d^c(v)$. The equality holds if (not if and only if) each color class of $G$ is a star.
\end{theorem}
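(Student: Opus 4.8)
The plan is to prove the inequality by a double-counting argument that rewrites $\sum_v d^c(v)$ as a sum over color classes, and then to read off the equality case by tracking how many components each star forest has. The first step is to switch the order of summation. Since $d^c(v)$ counts the colors appearing on the edges incident to $v$, summing over all vertices counts, for each color $i$, the number of vertices meeting an edge of color $i$. Writing $G^i$ for the color-$i$ class and $n_i$ for its number of non-isolated vertices, this gives
\[
\sum_v d^c(v) = \sum_i n_i,
\]
an identity that holds regardless of any structural hypothesis.

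Next I would evaluate $n_i$ using the star forest assumption. If $G^i$ is a star forest with $e_i$ edges and $k_i$ components, then since each star $K_{1,m}$ has exactly $m+1$ vertices (one more than its number of edges), summing over the components of $G^i$ yields $n_i = e_i + k_i$. Substituting and using $\sum_i e_i = e(G)$,
\[
\sum_v d^c(v) = \sum_i (e_i + k_i) = e(G) + \sum_i k_i.
\]
Because every color that is used has a nonempty class, each $k_i \geq 1$, and there are $c(G)$ colors in all, so $\sum_i k_i \geq c(G)$. This delivers $\sum_v d^c(v) \geq e(G) + c(G)$, as desired.

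For the equality clause I would note that the displayed identity is exact under the hypothesis, so equality with $e(G)+c(G)$ forces $\sum_i k_i = c(G)$, that is $k_i = 1$ for every color, meaning each color class consists of a single star (with $K_{1,1}$, a single edge, permitted as a degenerate case). In particular, if each color class is a genuine star $K_{1,t}$ with $t \geq 2$ then equality holds, which gives the ``if'' direction. The converse fails precisely because a single-edge class also has $k_i = 1$ and hence also yields equality, yet is not a star in the paper's sense (compare the preliminaries, where a single edge is distinguished from a star $K_{1,t}$, $t \geq 2$); this is what the parenthetical ``not if and only if'' records.

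I do not expect a genuine obstacle here: the entire argument rests on the elementary identity $n_i = e_i + k_i$ for star forests together with the order-swap $\sum_v d^c(v) = \sum_i n_i$. The only points requiring care are the bookkeeping that separates the component count $\sum_i k_i$ from the color count $c(G)$, and the mild terminological distinction between a true star $K_{1,t}$ with $t \geq 2$ and a single edge when discussing when equality is achieved.
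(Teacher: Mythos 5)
Your proof is correct, and it takes a cleaner, more elementary route than the paper's. The paper first builds an auxiliary graph $G_1$ by giving each component of each color class its own color (so $d^c_{G_1}(v)=d^c_G(v)$), then orients every star from its center to its leaves and computes $d^c_{G_1}(v)=d^-_D(v)+d^{c+}_D(v)$; summing over $v$ yields $e(G_1)+c(G_1)$, and the inequality comes from $c(G_1)\ge c(G)$. Your argument collapses all of this into the direct double count $\sum_v d^c(v)=\sum_i n_i$ together with the identity $n_i=e_i+k_i$ for a star forest with $k_i$ components, so that $\sum_v d^c(v)=e(G)+\sum_i k_i\ge e(G)+c(G)$. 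The two proofs are tracking exactly the same quantity --- your $\sum_i k_i$ is the paper's $c(G_1)$, with each in-arc of $D$ accounting for one edge and each center for one component --- but you dispense with the auxiliary recoloring and orientation entirely, and your version makes it transparent that the hypothesis could be weakened to ``each color class is a forest'' (since $|V|=|E|+(\text{number of components})$ for any forest). Your reading of the equality clause is also sound: under the hypothesis, equality holds precisely when every $k_i=1$, i.e.\ every color class is connected, and the paper's parenthetical ``not if and only if'' is best explained, as you note, by its convention in Section~2 of distinguishing a single edge from a star $K_{1,t}$ with $t\ge 2$.
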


\begin{proof}
	Let $G_1$ be the edge-colored graph constructed from $G$ by assigning distinct colors to each component of those color classes. There holds $d^c_{G_1}(v)=d^c_{G}(v)$, for all $v\in V(G)$. Let $D$ be an orientation of $G_1$ such that all those edges of each star (color classes in $G_1$) point from the center to leaf vertices. If the color class is a single edge, choose one of the two possible orientations arbitrarily. See Figure \ref{Fig:Evolution}  for example.
	
	\begin{figure}[bht]
		\centering
		\begin{tikzpicture}[x=0.8cm, y=0.8cm]
		\vertex (y1) at (4,2.5){}; \vertex (y2) at (4,1.5){};
		\vertex (y3) at (5,2){}; \vertex (y4) at (6,2){};
		\vertex (y5) at (6,3){}; \vertex (y6) at (5,1){};
		\vertex (y7) at (4,0){}; \vertex (y8) at (5,0){};
		\vertex (y9) at (6,0){};
		
		\vertex (z1) at (8,2.5){}; \vertex (z2) at (8,1.5){};
		\vertex (z3) at (9,2){}; \vertex (z4) at (10,2){};
		\vertex (z5) at (10,3){}; \vertex (z6) at (9,1){};
		\vertex (z7) at (8,0){}; \vertex (z8) at (9,0){};
		\vertex (z9) at (10,0){};
		
		\vertex (d1) at (12,2.5){}; \vertex (d2) at (12,1.5){};
		\vertex (d3) at (13,2){}; \vertex (d4) at (14,2){};
		\vertex (d5) at (14,3){}; \vertex (d6) at (13,1){};
		\vertex (d7) at (12,0){}; \vertex (d8) at (13,0){};
		\vertex (d9) at (14,0){};
		
		\path (y3) edge (y1) edge (y2);
		\path (y6) edge (y7) edge (y8) edge (y9);
		\path (y4) edge (y5);
		
		\path (z3) edge (z1) edge (z2);
		\path[dashed] (z6) edge (z7) edge (z8) edge (z9);
		\path[dotted] (z4) edge (z5);
		
		\path[->] (d3) edge (d1) edge (d2);
		\path[dashed,->] (d6) edge (d7) edge (d8) edge (d9);
		\path[dotted,->] (d4) edge (d5);
		
		\node (p2) at (5,-1) [label=center:{$G^i$ in $G$}]{};
		\node (p3) at (9,-1) [label=center:{in $G_1$}]{};
		\node (p4) at (13,-1) [label=center:{in $D$}]{};	
		\end{tikzpicture}
		\caption{An example of the evolution of a color class.}
		\label{Fig:Evolution}
	\end{figure}
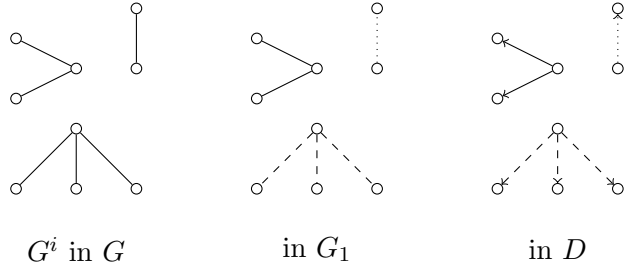
	
	Then for all $v\in V(G)$, $d^c_{G_1}(v)=d^-_D(v)+d^{c+}_D(v)$, where $d^{c+}_D(v)$ is the number of colors on the out-going arcs of $v$. Thus
	\beqs
	\sum_v d_G^c(v) = \sum_v d_{G_1}^c(v)  & = & \sum_v \bigl[d^-_D(v)+d^{c+}_D(v)\bigr]\\
	~ & = & e(G_1)+\sum_v d^{c+}_D(v)\\
	~ & = & e(G_1)+c(G_1)\\
	~ & \geq & e(G)+c(G).
	\eeqs
	The equality holds if $c(G_1)=c(G)$, i.e. each color class of $G$ is a star.
\end{proof}

\begin{theorem}\label{thm_relation}
	Let $f(n)$ be a function on $n$. If statement A: `For each edge-colored graph $G$ with order $n$, $e(G)+c(G)\geq f(n)$ is sufficient to force $G$ to contain a properly colored subgraph $H$.' is true, then statement B: `For each edge-colored graph $G$ with order $n$, $\sum_v d^c(v)\geq f(n)$ is sufficient to force $G$ to contain a properly colored subgraph $H$.' is also true.
\end{theorem}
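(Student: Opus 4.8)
The plan is to deduce statement B from statement A by replacing $G$ with an auxiliary edge-colored graph $G^{*}$ on the same vertex set for which the hypothesis of statement A holds and in which every properly colored copy of $H$ descends to a properly colored copy of $H$ in $G$. Given $G$ with $\sum_v d^c(v)\geq f(n)$, I first want to thin out each color class to a star forest without losing any color degree, and then refine the coloring so that Theorem~\ref{thm_starforest} applies with equality; this converts the color-degree sum into an $e+c$ quantity of exactly the same size.

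Concretely, for each color $i$ let $G^i$ be the $i$-colored subgraph (ignoring isolated vertices) and choose a spanning star forest $F^i\subseteq G^i$ covering every non-isolated vertex of $G^i$. The one genuinely combinatorial point, which I expect to be the main obstacle, is the existence of such a spanning star forest for an arbitrary graph without isolated vertices. I would prove this via a maximal matching $M$: the set $U$ of unsaturated vertices is independent and each $u\in U$ has only saturated neighbors, so after assigning each $u$ to one saturated neighbor, I treat each matched edge $xy$ by either keeping $xy$ as a $K_{1,1}$, or hanging the vertices assigned to $x$ (or to $y$) off the corresponding endpoint, or, when both endpoints receive assignments, discarding $xy$ and forming two separate stars centered at $x$ and $y$; this produces vertex-disjoint stars covering all non-isolated vertices of $G^i$. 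Let $G^{*}$ be the union of the $F^i$, recolored so that each individual star receives its own new color. Then $G^{*}$ has vertex set $V(G)$ and, ignoring the recoloring, is a subgraph of $G$.

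Two properties then need checking. First, since the stars of a star forest are vertex-disjoint, each vertex $v$ lies in at most one star of $F^i$, so $v$ keeps exactly one new color for each original color incident to it; hence $d^c_{G^{*}}(v)=d^c_{G}(v)$ and $\sum_v d^c_{G^{*}}(v)=\sum_v d^c_{G}(v)\geq f(n)$. As every color class of $G^{*}$ is now a single star, the equality case of Theorem~\ref{thm_starforest} gives $e(G^{*})+c(G^{*})=\sum_v d^c_{G^{*}}(v)\geq f(n)$. Second, the new coloring refines the old one within each color class, and two edges of the same original color $i$ that share a vertex $v$ lie in the common star of $F^i$ through $v$ and therefore receive the same new color. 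Consequently, if two edges of a subgraph of $G^{*}$ are adjacent and receive distinct new colors, they already had distinct colors in $G$, so a properly colored $H$ in $G^{*}$ is automatically a properly colored $H$ in $G$.

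To finish, apply statement A to $G^{*}$, which has order $n$ and satisfies $e(G^{*})+c(G^{*})\geq f(n)$; it therefore contains a properly colored copy of $H$, which by the second property is a properly colored copy of $H$ in $G$, establishing statement B. The only delicate steps are the spanning-star-forest lemma and the verification that the color refinement cannot manufacture a proper coloring that was absent in $G$; both are handled above, while the arithmetic is immediate from Theorem~\ref{thm_starforest}.
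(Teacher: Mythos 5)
Your proof is correct, and its engine is the same as the paper's: reduce each color class to a star forest, recolor so that every star is its own color class, invoke the equality case of Theorem~\ref{thm_starforest} to convert $\sum_v d^c(v)$ into $e+c$, apply statement A, and check that a properly colored $H$ in the refined coloring pulls back to one in $G$ (which holds because adjacent same-colored edges of $G^i$ lie in a common star of $F^i$ and hence keep a common new color). The one place you diverge is how the star-forest structure is obtained. The paper takes a counterexample to statement B that is minimal first in order and then in edges, and observes that any color class which is not a star forest contains an edge both of whose endpoints have another edge of that color; deleting it preserves all color degrees and cannot create a properly colored $H$, contradicting minimality. You instead prove directly that every graph without isolated vertices has a spanning star forest, via a maximal matching (the unsaturated vertices form an independent set with only saturated neighbors, and each matched edge is resolved into one or two stars), and apply this to each $G^i$. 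Both are sound; the paper's route is shorter because minimality hands it the structure for free, while yours is constructive, works on the given $G$ itself rather than an extremal counterexample, and makes explicit the lemma the paper dismisses as ``easy to see.''
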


\begin{proof}
	Let $f(n)$ and $H$ be given. Suppose statement $A$ is true, and statement $B$ is not true. Let $G$ be a counterexample of statement $B$ first with minimum order and then with minimum number of edges. It is easy to see that each color class of $G$ is a star forest. Let $G_1$ be the edge-colored graph constructed from $G$ by assigning distinct colors to each component of those color classes. Then we get $d^c_{G_1}(v)=d^c_{G}(v)$, for all $v\in V(G)$. Moreover, $G$ contains a properly colored $H$ if and only if $G_1$ contains a properly colored $H$. It follows from Theorem \ref{thm_starforest} that $e(G_1)+c(G_1)=\sum_v d_{G_1}^c(v)=\sum_v d_G^c(v)\geq f(n)$. Thus $G_1$ contains a properly colored $H$ by statement A, a contradiction.
\end{proof}

\begin{proof}[Proof of Theorem~\ref{Thm:DegSumforC4}]
It follows from Theorem \ref{thm_C4inG} and Theorem \ref{thm_relation} that: for an edge-colored graph $G$ order $n$ with $n \geq 4$, if $\sum_v d^c(v)\geq \frac{n(n+1)}{2}+1$, then $G$ contains a properly colored $C_4$. Moreover, if $\sum_v d^c(v) = \frac{n(n+1)}{2}$ and $G$ contains no properly colored $C_4$. Let $G_0$ be a minimum subgraph of $G$ obtained by edge deleting which maintains the color degree of each vertex. It is easy to see that each color class of $G_0$ is a star forest. Let $G_1$ be the edge-colored graph constructed from $G$ by assigning distinct colors to each component of those color classes. Then $G_1$ contains no properly colored $C_4$ and
\[
	e(G_1)+c(G_1)=\sum_v d_{G_1}^c(v)=\sum_v d_G^c(v)= \frac{n(n+1)}{2}.
\]
Thus $G_1$ has the layered structure by Theorem \ref{thm_C4inG} with the center part a rainbow triangle (in which each color class is a star). Since no two color classes are disjoint in $G_1$, we can deduce that $G\cong G_1$.
\end{proof}

By noting that the minimum color degree of those extremal graphs of Theorem \ref{Thm:DegSumforC4} is at most 2, we can deduce the following result.

\begin{corollary}
	Let $G$ be an edge-colored graph with order $n$ with $n\geq 4$. If $d^c(v)\geq (n+1)/2$ for all $v\in V(G)$, then $G$ contains a properly colored $C_4$.
\end{corollary}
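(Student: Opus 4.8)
The plan is to deduce this corollary directly from Theorem~\ref{Thm:DegSumforC4} by summing the color-degree hypothesis over all vertices. First I would observe that $d^c(v)\ge (n+1)/2$ for every $v\in V(G)$ immediately yields
$$
\sum_{v} d^c(v)\ \ge\ n\cdot\frac{n+1}{2}\ =\ \frac{n(n+1)}{2}.
$$
This places us exactly at, or above, the threshold appearing in Theorem~\ref{Thm:DegSumforC4}, so the corollary should follow by invoking that theorem and splitting according to whether the inequality above is strict.

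If the sum strictly exceeds $\frac{n(n+1)}{2}$, that is $\sum_v d^c(v)\ge \frac{n(n+1)}{2}+1$, then the first part of Theorem~\ref{Thm:DegSumforC4} applies verbatim and produces a properly colored $C_4$, finishing this case with no further work. The only remaining possibility is equality, $\sum_v d^c(v)=\frac{n(n+1)}{2}$, which in fact forces $d^c(v)=(n+1)/2$ for every vertex (so $n$ must be odd). In this boundary case I would argue by contradiction: assume $G$ contains no properly colored $C_4$ and bring in the structural (``moreover'') part of Theorem~\ref{Thm:DegSumforC4}.

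The heart of the argument, and the step I expect to require the most care, is extracting a vertex of small color degree from this extremal structure. By the moreover part, $G$ is complete and carries the layered structure of Theorem~\ref{thm_C4inKnstruc} whose center is a rainbow triangle $uvw$, with say $uv$ red, $vw$ blue, $uw$ green, and with every other edge incident to $u$ red, every other edge incident to $v$ blue, and every other edge incident to $w$ green. The crucial observation is that these colors pin down the palette at $u$ completely: all of $u$'s edges are red except $uw$, which is green, so $d^c(u)=2$, and this value is unaffected by however many outer layers the structure carries, since the star from $u$ to the rest of $G$ is monochromatic. Thus the extremal graph has minimum color degree at most $2$. Since $n\ge 4$ gives $(n+1)/2>2$, the bound $d^c(u)\ge (n+1)/2$ is violated, a contradiction, and hence $G$ must contain a properly colored $C_4$. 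The main obstacle is purely in reading off from Theorem~\ref{thm_C4inKnstruc} that the three center-triangle vertices each see exactly two colors; once that is in hand the numerical contradiction is immediate.
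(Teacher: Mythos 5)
Your overall strategy is exactly the paper's: sum the hypothesis over the vertices, invoke Theorem~\ref{Thm:DegSumforC4}, and in the boundary case derive a contradiction from the fact that the extremal graphs have a vertex of color degree at most $2$. The gap is precisely in the step you yourself flagged as needing the most care: it is \emph{not} true that a vertex $u$ of the \emph{center} rainbow triangle satisfies $d^c(u)=2$. In the layered structure of Theorem~\ref{thm_C4inKnstruc}, the clause ``all other incident edges of $u$ are red'' holds only within the innermost complete subgraph at which Structure~(2) is invoked; the edges from $u$ to the outer layers carry the colors of those outer layers (each outer Structure-(1) vertex is monochromatic in \emph{its own} color to everything inside it), not red. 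So a center-triangle vertex has color degree $2$ plus the number of outer colors it sees, which can be as large as $n-1$. A concrete witness with $n=4$: take a vertex $v_1$ joined in color $1$ to a rainbow triangle $uvw$ colored $2,3,4$. Then $\sum_v d^c(v)=1+3+3+3=10=n(n+1)/2$, there is no properly colored $C_4$, and $d^c(u)=3\geq (n+1)/2$, so the contradiction you seek does not occur at $u$; it occurs at $v_1$, where $d^c(v_1)=1$.

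The repair is immediate and is what the paper's one-line remark intends: inspect the \emph{outermost} layer rather than the center. If $G$ itself has Structure~(1), its distinguished vertex has color degree $1$; if $G$ itself has Structure~(2), each vertex of that outermost triangle has color degree exactly $2$; if $G$ has Structure~(3), every vertex other than the rainbow one has color degree at most $2$. In all cases the extremal graph has minimum color degree at most $2<(n+1)/2$ for $n\geq 4$, and your numerical contradiction then goes through verbatim. The first half of your argument (splitting into the strict and equality cases of the degree-sum bound) is correct as written.
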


Li et al.\cite{LNXZ2014} considered the color degree sum condition for the existence of rainbow triangles. They showed that the lower bound in the following theorem is sharp.

\begin{theorem}[Li, Ning, Xu, Zhang\cite{LNXZ2014}]
	Let G be an edge-colored graph on $n \geq 3$ vertices. If $\sum_v d^c(v)\geq n(n+1)/2$, then $G$ contains a rainbow $K_3$.
\end{theorem}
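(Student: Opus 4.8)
The plan is to deduce this statement directly from the edge-and-color count version, Theorem~\ref{Thm:C3inG}, by feeding it through the transfer principle of Theorem~\ref{thm_relation}, rather than re-proving anything about color degrees from scratch. The first thing I would record is that for a triangle the notions of \emph{rainbow} and \emph{properly colored} coincide: any two of the three edges of a $K_3$ share a vertex, so requiring adjacent edges to receive distinct colors is the same as requiring all three edges to receive distinct colors. Hence a rainbow $K_3$ is exactly a properly colored $K_3$, and there is no gap between the target subgraph of Theorem~\ref{thm_relation} (which speaks of properly colored copies) and the conclusion we want (a rainbow $K_3$).

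With that identification in hand, Theorem~\ref{Thm:C3inG} is precisely the assertion that statement~A of Theorem~\ref{thm_relation} holds for the choice $f(n) = n(n+1)/2$ and $H = K_3$: every edge-colored graph $G$ of order $n$ with $e(G)+c(G)\geq n(n+1)/2$ contains a properly colored $K_3$. I would simply invoke Theorem~\ref{thm_relation} with this $f$ and this $H$. Its conclusion, statement~B, is that the same subgraph is forced under the color-degree-sum hypothesis $\sum_v d^c(v)\geq n(n+1)/2$; translating back through the rainbow/properly-colored equivalence gives that $G$ contains a rainbow $K_3$, which is exactly what is to be proved.

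There is essentially no combinatorial obstacle left once this machinery is in place, since all of the real work is already hidden inside Theorems~\ref{Thm:C3inG} and~\ref{thm_relation}. The only points that genuinely need checking are bookkeeping: that the threshold appearing in Theorem~\ref{Thm:C3inG} is literally $n(n+1)/2$, so the two functions $f(n)$ agree, and that the order requirement $n\geq 3$ of Theorem~\ref{Thm:C3inG} is inherited by the hypothesis here. If instead one wanted a self-contained argument, the step that would demand actual effort is reproving the reduction embedded in Theorem~\ref{thm_relation} via Theorem~\ref{thm_starforest}, namely passing to a minimum counterexample whose color classes are star forests and recoloring each component to convert $\sum_v d^c(v)$ into $e(G_1)+c(G_1)$ while preserving exactly which triangles are properly colored; but since Theorem~\ref{thm_relation} is already available, I would not redo this and would present the result as a one-line corollary of the two cited theorems.
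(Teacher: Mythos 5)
Your proposal is correct and is essentially the route the paper itself indicates: the paper states this as a cited result and notes that it follows from Theorem~\ref{Thm:C3inG} via the transfer principle of Theorem~\ref{thm_relation} (the color-degree-sum to $e(G)+c(G)$ reduction), exactly as you do, with the same observation that rainbow and properly colored coincide for $K_3$.
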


It follows from Theorem \ref{Thm:C3inG} and Theorem \ref{Thm:C3inGstruc} that: if $e(G)+c(G) \geq n(n+1)/2$, then $G$ contains a rainbow $K_3$; moreover, if $e(G)+c(G)=n(n+1)/2-1$ and $G$ contains no rainbow triangle, then $G$ is a Gallai colored complete graph with $n-1$ colors. By following a similar method used in the proof of Theorem~\ref{Thm:DegSumforC4}, it is easy to show that: if $\sum_v d^c(v)=n(n+1)/2-1$ and $G$ contains no rainbow triangle, then $G$ is a Gallai colored complete graph with $n-1$ colors and each color class is a star. Together with Theorem \ref{Thm:C3inGstruc}, we have the following result.

\begin{theorem}
If $\sum_v d^c(v)=n(n+1)/2-1$ and $G$ contains no rainbow triangle, then $G$ is a complete graph. Moreover, its vertices and colors can be relabeled so that $V(G)=\{v_1,v_2,\cdots,v_n\}$, where $c(v_iv_j)=i$, for all $1\leq i<j\leq n$.
\end{theorem}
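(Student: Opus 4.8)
The plan is to mirror the reduction used in the proof of Theorem~\ref{Thm:DegSumforC4}, passing from the color-degree-sum condition to the edge-plus-color condition so that the \emph{structural} result for rainbow triangles, Theorem~\ref{Thm:C3inGstruc}, becomes applicable. Concretely, starting from $G$ with $\sum_v d^c(v)=n(n+1)/2-1$ and no rainbow triangle, I would first delete edges greedily while preserving every color degree $d^c(v)$, obtaining a spanning subgraph $G_0\subseteq G$ each of whose color classes is a star forest (a minimal edge set keeping every vertex incident to each color it sees is a minimal edge cover of that color class, hence a disjoint union of stars). Then I would build $G_1$ by assigning each component (star) of each color class of $G_0$ its own fresh color, so that each color class of $G_1$ is a single star while $d^c_{G_1}(v)=d^c_{G_0}(v)=d^c_G(v)$ for all $v$.

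The observation driving the reduction is that this recoloring preserves the absence of rainbow triangles. For two adjacent edges $e,f$ one has $c_{G_0}(e)=c_{G_0}(f)$ if and only if $c_{G_1}(e)=c_{G_1}(f)$: if they agree in $G_0$ they lie in a common color class and, sharing a vertex, in a common star, so they agree in $G_1$; the converse is immediate since $G_1$ only refines the coloring. As all three edges of a triangle are pairwise adjacent, a triangle is rainbow in $G_1$ exactly when it is rainbow in $G_0$, and since $G_0\subseteq G$ has no rainbow triangle, neither does $G_1$. Because each color class of $G_1$ is a star, Theorem~\ref{thm_starforest} gives the equality $e(G_1)+c(G_1)=\sum_v d^c_{G_1}(v)=n(n+1)/2-1$, so $G_1$ satisfies the hypotheses of Theorem~\ref{Thm:C3inGstruc}.

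Applying Theorem~\ref{Thm:C3inGstruc} yields $G_1\in\mathcal{G}_0$; in particular $G_1$ is a complete Gallai coloring on $n$ vertices using exactly $n-1$ colors. I would then combine this with the extra information that each color class of $G_1$ is a star. In the recursive description of $\mathcal{G}_0$, the monochromatic bipartite graph $G_1[V_1,V_2]$ joining the two parts is a full color class, since $c(G_1)=n-1=|V_1|+|V_2|-1$ forces the join color to be distinct from all colors inside $V_1$ and inside $V_2$; and a complete bipartite graph is a star only when one side is a single vertex. Hence at every level of the recursion one part is a singleton, peeling the vertices off one at a time, which is precisely the nested-star structure: relabeling the singleton peeled at step $i$ as $v_i$ and its join color as $i$ gives $c(v_iv_j)=i$ for all $i<j$.

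Finally I would upgrade ``$G_1$ has this structure'' to ``$G$ has this structure.'' Since $G_1$ is complete, so is $G_0$ (same edge set), and $G_0\subseteq G$ forces $G=G_0=K_n$; thus no edge was deleted and the color classes of $G$ are exactly the star forests of $G_0$. If some color class of $G$ were a star forest with at least two components, splitting it would create two \emph{vertex-disjoint} color classes in $G_1$, contradicting the fact that in the nested-star coloring every pair of color classes shares a vertex (all color classes contain the last two vertices). Hence each color class of $G$ is a single star, $c(G)=c(G_1)=n-1$, and $G\cong G_1$. I expect the main obstacle to be the second half of the third paragraph: isolating, within the broad family $\mathcal{G}_0$, exactly which Gallai colorings have all color classes equal to stars, and making the singleton-peeling and relabeling rigorous; the recoloring reduction and the closing $G\cong G_1$ argument are comparatively routine once the color-class-versus-Gallai-structure bookkeeping is in place.
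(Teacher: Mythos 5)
Your proposal is correct and follows essentially the same route the paper intends: the paper only sketches this theorem by saying to repeat the reduction from the proof of Theorem~\ref{Thm:DegSumforC4} (pass to the star-forest subgraph $G_0$, refine to $G_1$ with one star per color, apply the structural Theorem~\ref{Thm:C3inGstruc}, and use the pairwise-intersecting color classes to conclude $G\cong G_1$), and you carry out exactly that plan. The details you supply that the paper omits — that the refinement preserves the (non)rainbowness of triangles, that the color count forces the monochromatic join in the Gallai decomposition to be a new color and hence a star only when one side is a singleton, and the final upgrade from $G_1$ to $G$ — are all correct.
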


\section{Classification}

\begin{lemma}\label{Lemma:Oneortwo}
	Let $G$ be an edge-colored complete graph. If each color class and each pair of color classes form a threshold graph, then $G$ contains no properly colored $C_4$.
\end{lemma}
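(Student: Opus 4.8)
The plan is to argue by contradiction and to exhibit a forbidden induced subgraph in a single color class or in a pair of color classes, contradicting Lemma~\ref{Lemma:Forbid}. Suppose $G$ contains a properly colored $C_4$ on vertices $a,b,c,d$ traversed as $abcda$, and write $c_1=c(ab)$, $c_2=c(bc)$, $c_3=c(cd)$, $c_4=c(da)$. The proper-coloring condition says adjacent edges differ, i.e. $c_1\neq c_2$, $c_2\neq c_3$, $c_3\neq c_4$, $c_4\neq c_1$. These four inequalities are exactly what is needed to conclude that the two ``opposite'' color sets $\{c_1,c_3\}$ and $\{c_2,c_4\}$ are disjoint; this single observation will let me treat the $2$-, $3$-, and $4$-color cases uniformly, with no case split on how many distinct colors appear on the $C_4$.

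Next I would bring in the two diagonals $ac$ and $bd$, which exist since $G$ is complete. The key is to examine the edge-subgraph $G^{c_1}\cup G^{c_3}$ (the union of the color classes $c_1$ and $c_3$; this is a single class when $c_1=c_3$ and a pair of classes otherwise, so the hypothesis makes it a threshold graph in either case) restricted to $\{a,b,c,d\}$. Among the six pairs, $ab$ and $cd$ lie in this subgraph while $bc$ and $da$ do not, since their colors $c_2,c_4$ avoid $\{c_1,c_3\}$; thus the only possible additional edges are the diagonals. Because $ab$ and $cd$ are disjoint, avoiding an induced $2K_2$ forces at least one diagonal to receive a color in $\{c_1,c_3\}$. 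Running the identical argument with $G^{c_2}\cup G^{c_4}$ forces at least one diagonal to receive a color in $\{c_2,c_4\}$. Since these two color sets are disjoint, a single diagonal cannot meet both demands, so one diagonal is colored from $\{c_1,c_3\}$ and the other from $\{c_2,c_4\}$.

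To finish, I would suppose (after relabeling) that $c(ac)\in\{c_1,c_3\}$ and $c(bd)\in\{c_2,c_4\}$. Then in $G^{c_1}\cup G^{c_3}$ the induced subgraph on $\{a,b,c,d\}$ has precisely the edges $ab$, $cd$, $ac$, which form the path $b\,a\,c\,d$; none of the possible chords $bc$, $da$, $bd$ lies in $G^{c_1}\cup G^{c_3}$, so this is an induced $P_4$. By Lemma~\ref{Lemma:Forbid} this contradicts the hypothesis that $G^{c_1}\cup G^{c_3}$ is a threshold graph. The symmetric relabeling, in which the diagonal colored from $\{c_1,c_3\}$ is $bd$ rather than $ac$, yields the induced path $a\,b\,d\,c$ and the same contradiction.

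I expect the only real subtlety to be bookkeeping rather than a deep obstacle: one must confirm that the degenerate situations $c_1=c_3$ or $c_2=c_4$ are genuinely covered (there the relevant ``pair'' collapses to one color class, which is still threshold by hypothesis), and that the configurations produced are always $2K_2$ or $P_4$, never a subgraph that a threshold graph is allowed to contain. Since the entire argument uses only the four proper-coloring inequalities together with the $2K_2$- and $P_4$-freeness of the color-class unions, no information about the total number of colors or about the structure of $G$ away from $\{a,b,c,d\}$ is needed, which is what keeps the proof uniform and short.
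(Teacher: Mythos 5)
Your proof is correct and follows essentially the same route as the paper: restrict the union of the two opposite-edge color classes to the four cycle vertices and invoke the Chv\'atal--Hammer forbidden-subgraph characterization (Lemma~\ref{Lemma:Forbid}). The paper's version is slightly shorter because it never pins down the diagonals: whether zero, one, or two diagonals lie in $G^{c_1}\cup G^{c_3}$, one immediately gets an induced $2K_2$, $P_4$, or $C_4$, so the second color pair $\{c_2,c_4\}$ is not needed.
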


\begin{proof}
	Suppose not, there is a properly colored $C_4=v_1v_2v_3v_4v_1$ in $G$ in which $c(v_1v_2)=i$ and $c(v_3v_4)=j$. Let $H=G^i\cup G^j$, which is a threshold graph by the condition. Thus $v_1v_2$, $v_3v_4$ are in $H$ and $v_2v_3$, $v_4v_1$ are not in $H$. This implies $H[\{v_1,v_2,v_3,v_4\}]$ is an induced $2K_2 $, $P_4$ or $C_4$, a contradiction to Lemma~\ref{Lemma:Forbid}.
\end{proof}

Note that the complement of a threshold graph is also a threshold graph. We can deduce the following corollary from Lemma \ref{Lemma:Oneortwo}.
\begin{corollary}
	Let $G$ be a $2$-colored threshold graph in which each color class is
	a threshold graph. Then $G$ contains no properly colored $C_4$.
\end{corollary}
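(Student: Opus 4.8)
The plan is to reduce the statement to Lemma~\ref{Lemma:Oneortwo} by embedding $G$ into a complete graph via its complement. Write $n = |V(G)|$, and since $G$ is $2$-colored let $G^1$ and $G^2$ be its two color classes, each a threshold graph by hypothesis; let $\bar{G}$ denote the complement of $G$ on $V(G)$. I would form an edge-colored complete graph $\hat{G} \cong K_n$ by keeping the colors on the edges of $G$ and assigning a new third color, say color $3$, to every edge of $\bar{G}$. Any properly colored $C_4$ in $G$ uses only edges of $G$ and hence remains a properly colored $C_4$ in $\hat{G}$, so it suffices to show that $\hat{G}$ contains no properly colored $C_4$.

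To invoke Lemma~\ref{Lemma:Oneortwo} on $\hat{G}$, I must verify that each color class and each pair of color classes of $\hat{G}$ is a threshold graph. The three classes are $G^1$, $G^2$, and $\bar{G}$: the first two are threshold graphs by hypothesis, and $\bar{G}$ is one because the complement of a threshold graph is again a threshold graph, as noted just before the statement. For the pairs, colors $1$ and $2$ together form exactly $G = G^1 \cup G^2$, a threshold graph by hypothesis. The key observation is that, since $\hat{G}$ is complete and the three classes partition its edge set, the pair of colors $1$ and $3$ is $G^1 \cup \bar{G} = \overline{G^2}$, while the pair of colors $2$ and $3$ is $G^2 \cup \bar{G} = \overline{G^1}$. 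As $G^1$ and $G^2$ are threshold graphs, so are their complements, and hence both mixed pairs are threshold graphs.

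With all color classes and all pairs of color classes confirmed to be threshold graphs, Lemma~\ref{Lemma:Oneortwo} guarantees that $\hat{G}$ has no properly colored $C_4$, and the reduction above then yields the corollary. The only genuine content is the identification of each mixed pair with the complement of a single color class: once one sees that $G^i \cup \bar{G} = \overline{G^j}$ for $\{i,j\} = \{1,2\}$, the closure of threshold graphs under complementation supplies everything else. I expect this bookkeeping step—correctly matching each two-color combination in $\hat{G}$ to a complement—to be the main (if modest) obstacle, since it is precisely where the two-color hypothesis is used in an essential way.
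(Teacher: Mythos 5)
Your proof is correct and follows essentially the same route the paper intends: extend $G$ to a complete graph by giving the non-edges a third color, note that the new class $\bar{G}$ and the mixed pairs $G^i\cup\bar{G}=\overline{G^{3-i}}$ are threshold graphs by closure under complementation, and apply Lemma~\ref{Lemma:Oneortwo}. The identification of each pair of color classes with either $G$ itself or the complement of a single class is exactly the bookkeeping the paper's remark about complements of threshold graphs is pointing at.
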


The following theorem follows from Lemma \ref{Lemma:Dominating} and Lemma \ref{Lemma:Oneortwo}.
\begin{theorem}
	Let $G$ be an edge-colored $K_n$, then $G$ contains no properly colored $C_4$ if and only if
	each set of $t$ color classes form a threshold graph for $1 \leq t \leq 2$.
\end{theorem}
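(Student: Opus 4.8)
The plan is to prove the two implications separately, with each of the two preceding lemmas supplying one half. The backward direction is immediate: the hypothesis that every set of $t$ color classes forms a threshold graph for $1 \le t \le 2$ says precisely that each single color class and each union of two color classes is a threshold graph, which is exactly the hypothesis of Lemma~\ref{Lemma:Oneortwo}. Hence Lemma~\ref{Lemma:Oneortwo} yields that $G$ contains no properly colored $C_4$, and nothing further is needed here.

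For the forward direction, assume $G$ contains no properly colored $C_4$. The case $t=1$ is handled at once by Lemma~\ref{Lemma:Dominating}: each color class $G^i$ has a dominating vertex and is therefore a threshold graph. (Note this cannot simply be inherited from the $t=2$ case, since $G^i$ is generally not an \emph{induced} subgraph of $G^i \cup G^j$, and threshold graphs are not closed under taking non-induced subgraphs.) The real content is the case $t=2$: I must show that for any two colors $i,j$ the graph $H = G^i \cup G^j$ is a threshold graph. By Lemma~\ref{Lemma:Forbid}, it suffices to rule out an induced $2K_2$, $P_4$, or $C_4$ in $H$.

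So suppose, for contradiction, that $H$ contains one of these on vertices $\{a,b,c,d\}$. Since $G$ is complete, all six pairs among these vertices are edges of $G$; those lying in $H$ carry a color in $\{i,j\}$, while those not in $H$ carry a color outside $\{i,j\}$. The key observation is that a $4$-cycle on $\{a,b,c,d\}$ whose edges alternate between edges of $H$ and non-edges of $H$ is automatically properly colored, since each pair of adjacent edges then has one color in $\{i,j\}$ and one outside. I plan to verify that, after suitable labeling, all three forbidden configurations admit the \emph{same} alternating cycle: in each case $ab$ and $cd$ can be taken to be two non-adjacent edges of $H$ (the two components of $2K_2$; the two end-edges of the path $a\,b\,c\,d$; or two opposite edges of the $4$-cycle $a\,b\,c\,d\,a$), while $bd$ and $ca$ are not edges of $H$. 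Then $a\,b\,d\,c\,a$ alternates as $H$, non-$H$, $H$, non-$H$ and is a properly colored $C_4$, contradicting the hypothesis.

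The only genuine work, and hence the main obstacle, is the short bookkeeping of the previous paragraph: confirming for each of the three forbidden induced subgraphs that one can label the four vertices so that $\{ab,cd\}\subseteq H$ forms a matching while $bd,ca\notin H$. This is exactly the construction in the proof of Lemma~\ref{Lemma:Oneortwo} read in reverse --- there a properly colored $C_4$ forced a forbidden induced subgraph in $G^i\cup G^j$, whereas here a forbidden induced subgraph produces a properly colored $C_4$ --- so beyond this case check no new idea is required, and the theorem follows.
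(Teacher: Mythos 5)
Your proof is correct and follows essentially the same route as the paper, which simply derives the theorem from Lemma~\ref{Lemma:Dominating} (forward direction) and Lemma~\ref{Lemma:Oneortwo} (backward direction). Your explicit treatment of the forward $t=2$ case --- reading the construction in the proof of Lemma~\ref{Lemma:Oneortwo} in reverse so that an induced $2K_2$, $P_4$, or $C_4$ in $G^i\cup G^j$ yields the alternating, hence properly colored, $4$-cycle $a\,b\,d\,c\,a$ --- accurately fills in a step the paper leaves implicit, and your case check is correct.
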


In the following we discuss the structure of 2-colored and 3-colored complete graphs which contains no properly colored $C_4$.

Given a subgraph $G^{i}$ with spine $S_{i}$, we call the set of vertices in $G^{i} \setminus S_{i}$ the \emph{ribs} of $G^{i}$ (see Figure~\ref{Fig:Threshold}), denoted by $R_{i}$. As the spine vertices are removed from $G^{i}$ in order, the ribs become isolated and are also removed in some order. We therefore refer to the ribs as an ordered set of vertices $(u_{1}, u_{2}, \dots, u_{r})$, although just like the spine, this ordering need not be unique.

A {\em drawing} of a threshold graph is a partition $(S_{i},R_{i})$ of its vertices such that $V_{i}$ is the ordered set of spine vertices, and $R_{i}$ is the ordered set of ribs. If each color class of $G$ is a threshold graph, then a {\em drawing} of $G$ is a sequence of the drawings of each color class.

\begin{lemma}\label{lem_2cKn}
	Let $G$ be a 2-colored $K_n$ where each color class is a threshold graph. Then there is a drawing of $G$ such that $V(G)=S_1\cup S_2\cup \{u\}$, where $S_i$ is the spine of $G^i$ and $u$ is a tail of both spines.
\end{lemma}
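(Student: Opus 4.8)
The plan is to exploit the fact that, since $G$ is a $2$-colored $K_n$, the two color classes are complementary: $G^2 = \overline{G^1}$. Because threshold graphs are exactly the graphs admitting a vertex ordering in which every vertex is adjacent to all or to none of its predecessors (this is essentially the Chv\'atal--Hammer characterization, Lemma~\ref{Lemma:Forbid}), I would first fix such a threshold ordering $x_1, x_2, \dots, x_n$ of $G^1$: for each $k \ge 2$ the vertex $x_k$ is, in color $1$, adjacent either to all of $\{x_1, \dots, x_{k-1}\}$ (call $x_k$ a $D$-vertex) or to none of them (an $I$-vertex), and I declare $x_1$ to be an $I$-vertex. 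The crucial observation is that the very same ordering is a threshold ordering of $G^2 = \overline{G^1}$ with the roles reversed: $x_k$ is adjacent in color $2$ to all of its predecessors exactly when it is an $I$-vertex of $G^1$, and to none of them exactly when it is a $D$-vertex. Thus the $D$-vertices of $G^1$ are precisely the $I$-vertices of $G^2$, and vice versa.

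Next I would read off the two spines from this ordering. Let $S_1$ be the set of $D$-vertices of $G^1$ and let $S_2$ be the set of $I$-vertices of $G^1$ other than $x_1$ (equivalently, the $D$-vertices of $G^2$); then $\{x_1\}$, $S_1$, $S_2$ partition $V(G)$. To see that $S_1$ is a legitimate spine, I would verify that listing the $D$-vertices in reverse order of the elimination ordering produces a valid head sequence. The key structural fact is that every color-$1$ edge has its later endpoint a $D$-vertex, so the last $D$-vertex is adjacent in color $1$ to every vertex that still carries a color-$1$ edge and is therefore dominating; removing it together with any newly isolated vertices leaves a smaller threshold graph whose $D$-vertices are the remaining ones, and the argument repeats. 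Once the final $D$-vertex is removed no color-$1$ edges remain, so this sequence is exactly a spine with ribs $\{x_1\}\cup S_2$. By the symmetric statement for $G^2$, the set $S_2$ is a spine of $G^2$ with ribs $\{x_1\}\cup S_1$.

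Finally I would check that $u = x_1$ is a common tail. Since $x_1$ precedes every $D$-vertex of $G^1$, each such vertex is adjacent to $x_1$ in color $1$; hence $x_1$ is joined in color $1$ to every vertex of the spine $S_1$ and, being an $I$-vertex, to no other vertex of $G^1$, which is exactly the defining property of a tail. The identical argument in $G^2$ shows that $x_1$ is also a tail of the spine $S_2$. Combined with $V(G) = S_1 \cup S_2 \cup \{x_1\}$, this yields the desired drawing. Both spines are nonempty because both colors occur: if $S_1 = \emptyset$ then $G^1$ has no edges, and if $S_2 = \emptyset$ then $G^1 = K_n$, each contradicting that $G$ uses two colors.

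The step I expect to require the most care is the bookkeeping that converts the static elimination ordering into the dynamic ``remove a dominating vertex and then any newly isolated vertices'' process defining a spine, in particular confirming that deleting a $D$-vertex never destroys a later spine vertex, and that the single leftover vertex $x_1$ qualifies as a tail of each color simultaneously rather than of just one.
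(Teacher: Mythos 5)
Your argument is correct and in substance the same as the paper's: both rest on the complementary observation that the non-spine (rib) vertices of color $1$, minus one shared tail, become the spine of color $2$ in the complement. You phrase it via the creation-sequence characterization of threshold graphs (each vertex added as isolated or dominating) rather than via the paper's drawing of $G^1$, but your $D$-vertex/$I$-vertex dichotomy is exactly the spine/rib dichotomy, so the proofs coincide; the only nitpick is that the characterization you attribute to Lemma~\ref{Lemma:Forbid} is the standard equivalent construction-sequence form rather than the forbidden-subgraph form actually stated there.
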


\begin{proof}
	Consider a drawing of $G^1$ with ordered spine $S_1$ and ordered ribs $R_1$. Let $u$ be the last tail in the ribs.
	See Figure~\ref{Fig:Threshold} for example.
	
	Within $G^1$, by the definition, for each $v_i\in R_1\setminus \{u\}$, we have that $v_{i}$ is not adjacent in color 1 to each $u_k\in S_1$ for all $k$ greater than some $j$. In other words, for each $v_i\in R_1\setminus\{u\}$ (in $G^1$), we have that $v_{i}$ is adjacent in color 2 to each $u_k\in S_1$ for all $k$ greater than some $j$. Moreover, each $v_i\in R_1\setminus\{u\}$ is adjacent to $u$ in color $2$. Thus we may choose $S_2=R_1\setminus\{u\}$ to be a spine of $G^2$ (with the inherited ordering), and $R_2=S_1\cup\{u\}$ to be the set of ribs of color 2 (with the inherited ordering).
\end{proof}

\begin{lemma}\label{lem_idTG}
	Let $G$ be a threshold graph with drawing $(V, R)$. For $V' \subseteq V$ and $R' \subseteq R$, $G[V' ,R']$ is also a threshold graph.
\end{lemma}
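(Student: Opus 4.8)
The plan is to recognize that $G[V',R']$ is simply the subgraph of $G$ induced on the vertex set $V'\cup R'$, and then to invoke the forbidden-subgraph characterization of threshold graphs, Lemma~\ref{Lemma:Forbid}. Since $G$ is a threshold graph, it contains no induced $2K_2$, $P_4$, or $C_4$. The family of graphs avoiding a fixed set of induced configurations is closed under passing to induced subgraphs, because any induced subgraph of $G[V'\cup R']$ is again an induced subgraph of $G$. Hence $G[V',R']$ also contains no induced $2K_2$, $P_4$, or $C_4$, and Lemma~\ref{Lemma:Forbid} immediately yields that $G[V',R']$ is a threshold graph.

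Before writing this out, I would pin down the notation, since $G[V_1,V_2]$ elsewhere in the paper denotes a bipartite subgraph. Here $G[V',R']$ must mean the graph induced on $V'\cup R'$ rather than the bipartite subgraph between $V'$ and $R'$: the bipartite reading would make the statement \emph{false}, because two ribs sharing the same two spine-neighbors (all four vertices lying in $V'\cup R'$) produce an induced $K_{2,2}=C_4$ in the bipartite subgraph, and $C_4$ is a chain graph but not a threshold graph. The point deserving care is exactly this disambiguation; once it is fixed, the one-line forbidden-subgraph argument settles the bare statement.

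For the downstream uses, where one wants $(V',R')$ to be an honest drawing of the induced subgraph and not merely a threshold graph in the abstract, I would argue structurally instead. Recall the spine/ribs adjacencies underlying a drawing $(V,R)$: the ordered spine $V=(v_1,\dots,v_t)$ is a clique, the ribs $R$ form an independent set, and each rib is adjacent to precisely an initial segment $v_1,\dots,v_j$ of the ordered spine. Restricting to $V'$ with the inherited order keeps the spine a clique, restricting to $R'$ keeps the ribs independent, and the neighbor-prefix of any rib intersected with $V'$ is again a prefix of the ordered set $V'$. These three properties are exactly the structural description of a threshold graph with ordered spine $V'$ and ordered ribs $R'$, so the inherited pair $(V',R')$ is a valid drawing of $G[V',R']$. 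There is no substantive obstacle here: after the notational point is clarified, both the forbidden-subgraph argument and the inherited-drawing argument are routine.
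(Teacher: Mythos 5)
Your proposal is correct, and your second, ``inherited drawing'' argument is precisely the paper's proof, which consists of the single observation that $V'$ and $R'$ with the orderings inherited from $V$ and $R$ form a drawing of a smaller threshold graph. Your lead argument via Lemma~\ref{Lemma:Forbid} (induced subgraphs of $\{2K_2,P_4,C_4\}$-free graphs are $\{2K_2,P_4,C_4\}$-free) is a valid alternative the paper does not use; it is shorter but only certifies threshold-ness in the abstract, whereas the paper's version additionally exhibits $(V',R')$ as an explicit drawing of $G[V',R']$, which is the form in which the lemma is meant to be applied. Your disambiguation of the notation is also well taken: $G[V',R']$ here must be the subgraph induced on $V'\cup R'$ equipped with the spine/rib partition, not the bipartite subgraph between $V'$ and $R'$ (which, as you note, can contain an induced $C_4$); the paper's own proof, which speaks of a drawing of the resulting graph, confirms this reading. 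One small point worth stating explicitly in the structural argument is that a rib of $R'$ may lose all of its spine neighbours under the restriction and become isolated; this is harmless only because the paper explicitly allows its threshold graphs to contain isolated vertices.
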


\begin{proof}
	Given a threshold graph $G$ with drawing $(V, R)$, let $V' \subseteq V$ and $R' \subseteq R$. Then $V'$ and $R'$, with the orderings inherited from $V$ and $R$ respectively, form a drawing $(V', R')$ of a (smaller) threshold graph.
\end{proof}

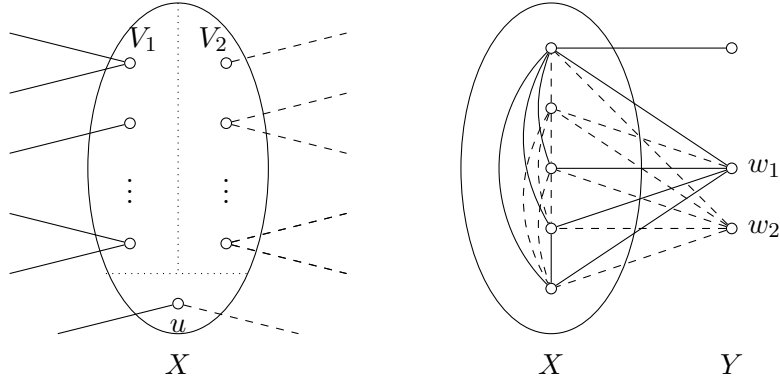
\begin{figure}[bht]
	\centering
	\begin{tikzpicture}[x=0.8cm, y=0.8cm]
	\vertex (v1) at (0,3) []{}; \vertex (v2) at (0,2) []{};
	\vertex (v3) at (0,0) []{};
	\vertex (w1) at (1.6,3) []{}; \vertex (w2) at (1.6,2) []{};
	\vertex (w3) at (1.6,0) []{};
	\vertex (u1) at (0.8,-1) [label=below:{$u$}]{};
	
	\path (v1) edge (-2,2.5) edge (-2,3.5);
	\path (v2) edge (-2,1.5);
	\path (v3) edge (-2,0.5) edge (-2,-0.5);
	\path[dashed] (w1) edge (3.6,3.5);
	\path[dashed] (w2) edge (3.6,1.5) edge (3.6,2.5);
	\path[dashed] (w3) edge (3.6,0.5) edge (3.6,-0.5);
	\path[dashed] (w3) edge (3.6,0.5) edge (3.6,-0.5);
	\path[dashed] (u1) edge (2.8,-1.5); \path (u1) edge (-1.2,-1.5);
	
	\path[dotted] (0.8,4) edge (0.8,-0.5);
	\path[dotted] (-0.4,-0.5) edge (2,-0.5);
	\draw (0.8,1.25) ellipse (1.5 and 2.75) node {};
	
	\node (p1) at (0.8,3.4) [label=left:{$V_1$}]{};
	\node (p2) at (0.8,3.4) [label=right:{$V_2$}]{};
	\node (p3) at (0,1) [label=center:{$\vdots$}]{};
	\node (p4) at (1.6,1) [label=center:{$\vdots$}]{};
	
	\draw (7,1.25) ellipse (1.5 and 2.75) node {};
	
	\vertex (w1) at (7,3.25) []{}; \vertex (w2) at (7,2.25) []{};
	\vertex (w3) at (7,1.25) []{}; \vertex (w4) at (7,0.25) []{};
	\vertex (w5) at (7,-0.75) []{};
	\vertex (y1) at (10,0.25) [label=right:{$w_2$}]{};
	\vertex (y2) at (10,1.25) [label=right:{$w_1$}]{};
	\vertex (y3) at (10,3.25) []{};
	
	\path (y2) edge (w1) edge (w3) edge (w4) edge (w5);
	\path[dashed] (y2) edge (w2); \path (y3) edge (w1);
	\path[dashed] (y1) edge (w1) edge (w2) edge (w3) edge (w4) edge (w5);
	\path (w1) edge[bend right=20] (w3) edge[bend right=30] (w4) edge[bend right=45] (w5);
	\path[dashed] (w2) edge (w1) edge (w3) edge[bend right=20] (w4) edge[bend right=30] (w5);
	\path[dashed] (w3) edge (w4) edge[bend right=20] (w5);
	\path (w4) edge (w5);
	
	\node (p5) at (0.8,-2) [label=center:{$X$}]{};
	\node (p6) at (7,-2) [label=center:{$X$}]{};
	\node (p7) at (10,-2) [label=center:{$Y$}]{};
	\end{tikzpicture}
	\caption{Two possible structures of a 2-colored threshold graph.}
\end{figure}

\begin{theorem}\label{thm_2cTG}
	Let $G$ be a 2-colored threshold graph in which each color class is also a threshold graph. Assume $G$ has a drawing $(X,Y)$. Then either
	\begin{itemize}
		\item both the spines of these color classes belong to $X$; then the spine of $G$ has a partition $V_1\cup V_2\cup\{u\}$, such that for $i\in \{1,2\}$, $V_i$ or $V_i\cup \{u\}$ is a spine of $G^i$, and $[v,Y]$ is monochromatic with color $i$ for all $v\in V_i$; or
		\item at least one color class has a spine vertex in $Y$, denote by $w_1$ and $w_2$ respectively if they exist; then $G-\{w_1,w_2\}$ has the first structure.
	\end{itemize}
	
\end{theorem}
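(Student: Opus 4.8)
The plan is to reduce the whole graph to its spine, where Lemma~\ref{lem_2cKn} already does the work, and then control how the ribs attach. First I would record two consequences of the drawing $(X,Y)$: since removing the spine leaves no edges, $Y$ is independent in $G$, and since each spine vertex is dominating at the moment it is removed, every pair of spine vertices is adjacent, so $G[X]$ is a complete graph. Thus $G[X]$ is a $2$-colored $K_{|X|}$ whose two color classes $G^1[X],G^2[X]$, being induced subgraphs of the threshold graphs $G^1,G^2$, are themselves threshold. Applying Lemma~\ref{lem_2cKn} to $G[X]$ gives the partition $X=V_1\cup V_2\cup\{u\}$ in which $V_i$ is a spine of $G^i[X]$ and $u$ is a common tail. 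This already produces the skeleton $V_1\cup V_2\cup\{u\}$ asserted in the first alternative; what remains is to locate the spines of the \emph{full} color classes and to color the edges to $Y$.

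Next I would establish the dichotomy. Suppose two spine vertices $s,s'$ of $G^i$ both lay in $Y$. Two spine vertices of a threshold graph are always adjacent, so $ss'$ would be an edge of $G^i$, i.e.\ a color-$i$ edge of $G$; but $s,s'\in Y$ and $Y$ is independent in $G$, a contradiction. Hence at most one spine vertex of each color class lies in $Y$; call it $w_i$ when it exists. If no $w_i$ exists we are in the first case; otherwise we are in the second. For the second case I would pass to $G'=G-\{w_1,w_2\}$: by Lemma~\ref{lem_idTG} (taking $V'=X$ and $R'=Y\setminus\{w_1,w_2\}$) the graph $G'$ is threshold with the inherited drawing $(X,Y\setminus\{w_1,w_2\})$, each color class of $G'$ is threshold, and --- after checking that deleting the unique rogue vertices neither leaves nor creates a color-class spine vertex among the ribs --- every color-class spine of $G'$ now lies in $X$. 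Thus $G'$ falls under the first case, which is exactly what the second alternative asserts.

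Finally I would verify the first case in detail. Here all spine vertices of $G^1$ and $G^2$ lie in $X$. Comparing the spine of $G^i$ with the spine $V_i$ of $G^i[X]$ --- the latter obtained from the former by deleting the ribs lying in $Y$ --- I would show, using the non-uniqueness of spines noted in Section~2, that $V_i$ or $V_i\cup\{u\}$ is a legitimate spine of $G^i$ (the tail $u$ must be appended precisely when every color-$i$ edge at some otherwise-isolated spine vertex was lost to $Y$). The heart of the argument is the monochromaticity claim: for $v\in V_i$ and $y\in Y$ with $vy\in E(G)$, one has $c(vy)=i$. I would prove this by contradiction, assuming $c(vy)\neq i$ and combining the nested neighborhood structure of the three threshold graphs $G,G^1,G^2$ with the absence of a properly colored $C_4$ (Lemma~\ref{Lemma:Oneortwo}): the color-$i$-dominance of $v$ inside $X$ forces a companion pair of edges that close up into a properly colored $4$-cycle through $vy$. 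I expect this last step to be the main obstacle, since it requires simultaneously tracking three interacting threshold orders and pinning down the exact role of the common tail $u$; the bookkeeping around $u$ (and the symmetric treatment of the two colors) is where the argument is most delicate, whereas the reductions in the first two paragraphs are routine.
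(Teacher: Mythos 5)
Your overall architecture coincides with the paper's: $G[X]$ is a $2$-colored complete graph whose color classes are threshold, Lemma~\ref{lem_2cKn} yields $X=V_1\cup V_2\cup\{u\}$, the independence of $Y$ forces at most one spine vertex of each color class into $Y$, and deleting $w_1,w_2$ reduces the second alternative to the first. Those reductions are fine. The gap is in the first case, which you yourself flag as the main obstacle and leave unexecuted, and the mechanism you propose there cannot succeed as stated. You aim to prove that every $v\in V_i$ sends only color-$i$ edges to $Y$ by extracting a properly colored $C_4$ from any violation. But a violation is a perfectly consistent configuration: if $v\in V_1$ has a color-$2$ edge to some $y\in Y$, this merely forces $v$ to lie in the spine of $G^2$ (in this case $y$ is a spine vertex of neither color, and every edge of a threshold graph meets its spine), and no properly colored $C_4$ arises --- indeed Lemma~\ref{Lemma:Oneortwo} guarantees that $G$ contains none anywhere, so no contradiction of that form is available.

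What is actually true, and what the paper proves, is that at most one vertex of $V_i\cup\{u\}$ can send color-$(3-i)$ edges to $Y$: such a vertex must lie in the spine of $G^{3-i}$, which is a clique in color $3-i$, while $V_i\cup\{u\}$ is a clique in color $i$, so two such vertices would be impossible. The at most two exceptional vertices $v_1,v_2$ are then absorbed by \emph{re-partitioning}, e.g.\ replacing $(V_1,V_2,\{u\})$ by $V_1-v_1+u$ and $V_2+v_1$ after checking that $u$ must then lie in the spine of color $1$ and that $v_2=u$. Your plan only contemplates toggling whether $u$ is appended to $V_i$; it has no provision for moving a vertex of $V_1$ into $V_2$, so even a successful monochromaticity argument would not reach the partition asserted in the statement. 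You need to add this exchange step (or an equivalent normalization of the two spines) to close the case.
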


\begin{proof}
	Let $G[X]$ be the subgraph of $G$ induced on the spine of $G$, so it is a 2-colored complete graph. By Lemma~\ref{Lemma:Dominating}, both $G^1[X]$ and $G^2[X]$ are threshold graphs. It follows from Lemma \ref{lem_2cKn} that there is a drawing of $G[X]$ such that $X=V_1\cup V_2\cup \{u\}$ where $V_{i}$ is the spine of color $i$ (within $X$) and $u$ is a shared tail of both colors (again restricted to $X$).
	
	If $X$ contains only one vertex, then $G$ is a star and we can set $V_1=V_2=\emptyset$ and $G$ has the first structure. So we can assume that $X$ contains at least two vertices.
	
	\setcounter{case}{0}
	\begin{case}
		Both the spines of these color classes belong to $X$.
	\end{case}
	
	Since $V_{i} \cup \{u\}$ is complete in color $i$ and $Y$ contains no spine vertices of either color, there is at most one vertex $v_{i} \in V_{i} \cup \{u\}$ with any edges of color $3 - i$ to $Y$ since such a vertex must be in the spine of color $3 - i$, for $\in \{1,2\}$. If both $v_{1}$ and $v_{2}$ exist and are distinct, then without loss of generality, we may assume $c(v_{1}v_{2}) = 1$. Note that $v_{1} \neq u$ since $c(v_{2}u) = 2$. Since $v_{1}$ is in the spine of color $2$ and $c(uv_{1}) = 1$, $u$ must not be in the spine of color $2$. This means that all edges from $u$ to $Y$ must have color $1$. Then $u$ is in the spine of color $1$, which means $v_{2} = u$ since otherwise $c(uv_{2}) = 2$ so both cannot be in the spine of color $1$. Then $V_{1} - v_{1} + u\cup V_{2}\cup v_{1}$ is the desired partition, such that $V_{1} - v_{1} + u$ is a spine of color $1$ and $V_{2} + v_{1}$ is a spine of color $2$, completing the proof of this case.
	
	\begin{case}
		At least one color class having a spine vertex in $Y$.
	\end{case}
	
	Since $Y$ is an independent set, if $G^i$ has a spine vertex in $Y$, there must be only one such vertex. Denote it by $w_i$ if it exists. By Lemma~\ref{Lemma:Dominating}, both $G^i-w_i$ and $G-\{w_1,w_2\}$ are threshold graphs. Moreover, the spine of $G^i-w_i$ is in the spine of $G-\{w_1,w_2\}$. It follows from Case 1 that there is a partition as required.
\end{proof}

\section*{Acknowledgments}
The first author's research was partially supported by NSFC (Nos.\ 11701441, 11601429 and 11671320). The third author's research was partially supported by NSFC (Nos.\ 11671320 and U1803263)  and the Fundamental Research Funds for the Central Universities (No. 3102019GHJD003).


\end{document}